\documentclass[10pt,reqno]{amsart}
\usepackage{booktabs}
\usepackage{amsmath,amssymb,amsthm,amsfonts}
\usepackage{cite}
\usepackage[height=190mm,width=130mm]{geometry}
\usepackage[matrix,arrow,curve]{xy}
\usepackage{amscd}
\usepackage{graphicx, tikz}
\usepackage{comment}
\usepackage{etoolbox}
\usepackage{mathtools}
\usepackage[hidelinks]{hyperref}
\usepackage{color}
\usepackage{cancel}

\definecolor{brown}{RGB}{150,100,0}

\definecolor{purple}{RGB}{150,0,100}
\definecolor{grey}{RGB}{128,128,128}

\mathchardef\mhyp="2D

\def\NABLA#1{{\mathop{\nabla\kern-.5ex\lower1ex\hbox{$#1$}}}}
\def\Nabla#1{\nabla\kern-.5ex{}_#1}

\theoremstyle{plain}
\newtheorem{theorem}{Theorem}[section]
\newtheorem{lemma}[theorem]{Lemma}
\newtheorem{corollary}[theorem]{Corollary}
\newtheorem{proposition}[theorem]{Proposition}

\theoremstyle{definition}
\newtheorem{definition}[theorem]{Definition}

\newtheorem{remark}[theorem]{Remark}

\newcommand{\ps}{{\raise 1pt\hbox{\tiny (}}}

\newcommand{\pss}{{\raise 1pt\hbox{\tiny [}}}
\newcommand{\pdd}{{\raise 1pt\hbox{\tiny ]}}}
\newcommand{\pd}{{\raise 1pt\hbox{\tiny )}}}

\newcommand{\bs}{{\raise 1pt\hbox{\tiny [}}}
\newcommand{\bd}{{\raise 1pt\hbox{\tiny ]}}}

\def\cross{\mathinner{\mathrel{\raise0.8pt\hbox{$\scriptstyle>$}}
		\joinrel\mathrel\triangleleft}}

\def\K{\mathcal{K}}

\newcommand{\wt}{\mbox{\rm wt}\ }

\newcommand{\nc}{\newcommand}
\nc{\cali}{\mathcal}
\nc{\on}{\operatorname}
\nc{\Wick}{{\mathbf :}}
\nc{\ddz}{\frac{\partial}{\partial z}}
\nc{\ch}{\mbox{ch}}
\nc{\Oo}{{\cali O}}
\nc{\cond}{|\,}
\nc{\bib}{\bibitem}
\nc{\pone}{\mathbb{P}^1}
\nc{\pa}{\partial}
\nc{\arr}{\rightarrow}
\nc{\larr}{\longrightarrow}
\nc{\ket}{\rangle}
\nc{\bra}{\langle}
\nc{\gam}{\bar{\gamma}}
\nc{\q}{\widetilde{Q}}
\nc{\ep}{\epsilon}
\nc{\su}{\widehat{{\mathfrak{s}}{\mathfrak{l}}}_2}
\nc{\sw}{{\mathfrak{s}}{\mathfrak{l}}}
\nc{\h}{\mathfrak{h}}
\nc{\n}{\mathfrak{n}}
\nc{\ab}{\mathfrak{a}}
\nc{\is}{\mathbf{i}}
\nc{\js}{\mathbf{j}}
\nc{\bi}{\bibitem}
\nc{\He}{{\cali H}}
\nc{\inv}{^{-1}}
\nc{\ol}{\overline}
\nc{\wh}{\widehat}
\nc{\dst}{\displaystyle}
\nc{\delt}{\partial_t}
\nc{\ddt}{\frac{\partial}{\partial t}}
\nc{\delx}{\partial_x}
\nc{\mb}{\mathbf}
\nc{\mf}{\mathfrak}
\nc{\mbb}{\mathbb}
\nc{\Ctt}{\mathbb{C}((t))}
\nc{\Ct}{\mathbb{C}[t,t\inv]}
\nc{\ghat}{\wh{\Gamma}}

\nc{\un}{\underline}
\nc{\mc}{\mathcal}
\nc{\BB}{{\mc B}}
\nc{\bb}{{\mf b}}
\nc{\kk}{{\mf k}}
\nc{\frob}{\times}
\nc{\sm}{\setminus}
\nc{\Pp}{\mathbb{P}^1}
\nc{\Aa}{\mc A}
\nc{\AutO}{\on{Aut}\Oo}
\nc{\AUTO}{\un{\on{Aut}}\Oo}
\nc{\AUTK}{\un{\on{Aut}}\K}
\nc{\Heout}{\He_{\out}}
\nc{\Hetil}{{\widetilde\He}}
\nc{\wb}{\overline}
\nc{\Res}{\on{Res}}
\nc{\pitil}{\Pi}
\nc{\Ctil}{\wt{C}}
\nc{\auto}{\on{Aut} \Oo}
\nc{\phitil}{\wt{\phi}}
\nc{\gz}{\Gamma_{\vec z}}
\nc{\tensorM}{\bigotimes_{i=1}^N{\mathbb M}_i}
\nc{\tensorW}{\bigotimes_{i=1}^N W_{\nu_i,k}}
\nc{\out}{\on{out}}
\nc{\m}{\mathfrak{m}}
\nc{\gx}{\Gamma^0_{\vec x}}
\nc{\hx}{\He^0_{\vec x}}
\nc{\tensorpi}{\pi_{\nu_1,\ldots,\nu_N}^\kappa}
\nc{\Phizw}{\Phi_{\vec w}({\vec z})}
\nc{\Pro}{\mathbb{P}}
\nc{\De}{\Delta}
\nc{\us}{\underset}
\nc{\Ll}{\mc L}
\nc{\dR}{\on{dR}}
\nc{\T}{\mc T}
\nc{\Xn}{\overset{\circ}X{}^n}
\nc{\Dn}{\overset{\circ}D{}^n}
\nc{\Dxn}{\overset{\circ}D{}^n_x}
\nc{\varphitil}{\wt{\varphi}}
\nc{\lf}{\mathfrak{l}}
\nc{\GL}{{}^L G}
\nc{\Vir}{\on{Vir}}

\numberwithin{equation}{section}

\begin{document}

\title[Azumaya descent and \'etale-local cyclotomic comparison]
{Azumaya descent and \'etale-local cyclotomic comparison: a noncommutative filtration}

\author{H\^ong V\^an L\^e$^\dagger$}
\address{Institute of Mathematics, Czech Academy of Sciences, \v{Z}itn\'{a} 25, Prague, Czech
		Republic}
\email{}

\author{Alexander Zuevsky}
\address{Institute of Mathematics, Czech Academy of Sciences, \v{Z}itn\'{a} 25, Prague, Czech
		Republic}
\email{zuevsky@yahoo.com}
\keywords{Topological Hochschild homology ($THH$); Azumaya algebras; Derived Azumaya algebras; Cyclotomic spectra; Bhatt-Morrow-Scholze (BMS) filtration; \'{E}tale descent; Noncommutative motives; $\mathbb{E}_\infty$-rings, Prismatic cohomology.}
\subjclass[2020]{Primary:16E40, 19D55, 55P43, Secondary:14F30, 16H05, 18E30}

\begin{abstract}
Bhatt-Morrow-Scholze motivic filtration on topological Hochschild homology ($THH$) 
is extended to Azumaya algebras over quasisyntomic rings. We prove the Azumaya descent theorem:
 $THH$ of such an algebra is equivalent, as an underlying spectrum, to $THH$ of its center,
 normalized on $\pi_0$ by the reduced trace  instead of   the unit map. This comparison 
$\Phi_A$ is not shown to be \'etale-locally cyclotomic. 
  But what is cyclotomic 
\'etale-locally is a separate Morita trace, agreeing with $\Phi_A$ only on $\pi_0$. 
The resulting transported filtration is exactly the center  BMS filtration, well 
defined up to the automorphism ambiguity of $\Phi_A$. It is insensitive to the Brauer class, 
reflected instead by $K$-theoretic and module-categorical pairings $THH$ does not see. 
We establish the analogous statement for derived Azumaya algebras over 
connective $\mathbb{E}_\infty$-rings, under a connectivity hypothesis. 
\end{abstract}

\maketitle

\footnotetext[1]{$^\dagger$ We deeply mourn the sudden passing of our co-author
 and dear friend, H\^ong V\^an L\^e,
 who suddenly passed away during the completion of this work. 
 We are profoundly grateful for her wisdom, leadership, 
and invaluable contributions to this research.}
	\tableofcontents

\section{Introduction}

The recent synthesis of $p$-adic Hodge theory by Bhatt, Morrow, and Scholze has profoundly advanced the understanding of cohomology theories: Bhatt, Morrow, and Scholze constructed a motivic filtration on topological Hochschild homology ($THH$) and its cyclotomic variants \cite{bms, bhm, nik}.
  The prismatic cohomology later introduced by Bhatt and Scholze \cite{bs} gives the modern, general setup for the crystalline objects computing its graded pieces.
  Extending this powerful development to noncommutative spaces, such as deformation-quantized varieties, derived categories of coherent sheaves, or matrix factorizations, is a central objective driven by the paradigm of noncommutative motives \cite{tabuada, bgt}.

While the commutative theory provides a unified setting connecting crystalline, de Rham, and $p$-adic \'{e}tale cohomology theories, passing to noncommutative deformations presents significant structural challenges \cite{dm}.
  Specifically, formulating a noncommutative analogue of the Bhatt-Morrow-Scholze (BMS) filtration requires understanding the filtered deformation of the underlying cyclotomic structure \cite{nik}.
  This involves a highly non-abelian deformation problem, complete with equivariant Frobenius maps and coherent restrictions, that standard Maurer-Cartan formalisms cannot fully capture \cite{Kaledin2016}.

From a physical perspective, the necessity of a noncommutative prismatic cohomology is motivated by the behavior of quantum observable algebras under arithmetic reduction modulo $p$ \cite{bk}. Formal deformation quantization in characteristic $p$ exhibits radically different features than in characteristic zero.
  For example, modular Weyl algebras cease to be simple and acquire a large center, effectively becoming Azumaya algebras over a Frobenius twist \cite{revoy, bmr}. This large-center arithmetic  phenomenon physically parallels rational magnetic flux setups, and
the integer and fractional quantum Hall effects \cite{TKNN, kmmzz}.

In this paper, we bypass the conjectural general noncommutative prismatic theory by
establishing a noncommutative filtration for a broad and natural class of algebras,
namely Azumaya algebras over quasisyntomic commutative rings. We state the scope of what
follows, since the phrase ``cyclotomic descent'' is easy to overread: we prove
that for these algebras, $THH$ descends to the center as an underlying spectrum, and
 separately, \'etale-locally, a Morita trace comparison is a cyclotomic
equivalence. These two comparisons are not claimed to agree beyond $\pi_0$. 
 Only the
former (the underlying equivalence, together with a chosen witness $\Phi_A$
 non-canonical data) is what lets the commutative BMS filtration be transported to
the noncommutative setting (Definition \ref{deftransported}). A global cyclotomic
statement holds only when $A$ admits a global splitting module.

Furthermore, we transition into higher algebra to generalize the discrete case \cite{toen, ag}.  We establish an analogous descent theorem for derived Azumaya algebras over connective
$\mathbb{E}_\infty$-rings within the setup of stable $\infty$-categories \cite{bgt}.
By demonstrating that $THH$ descends for derived Azumaya algebras (again, as an underlying
spectrum, and separately, \'etale-locally, via a cyclotomic Morita trace) we provide the
underlying comparison that a transport of the BMS filtration to this homotopical setting
would require. Unlike in the discrete case, we do not carry out that transport  here
(see Table \ref{tabsummary}).

We separate what is classical here from what is new. That $THH$, as a
localizing invariant, is Morita invariant, and that Azumaya algebras are \'etale-locally
matrix algebras, are known. The former is due to Blumberg-Gepner-Tabuada \cite{bgt} and
underlies our Lemma \ref{lemmorita-pic}(i). The latter, in the derived setting, is
Antieau-Gepner local triviality theorem \cite{ag}. Combined naively, these give only
that $THH(A)$ is locally equivalent to $THH(Z)$ on a splitting cover, which is
already classical in spirit (compare Corti\~nas-Weibel identification of ordinary
Hochschild homology \cite{CW}). What is new in Theorem \ref{thmazumaya} is: (i) a
 global equivalence of underlying spectra, valid whether or not $A$ splits, via
the invertible-object argument of Section \ref{azumayo}  instead of   local gluing; (ii)
its normalization by the reduced trace  instead of   the unit map, which we show differ
by multiplication by $\deg A$ (Remark \ref{remunit}); (iii) the resulting transported
BMS filtration, with graded pieces identified explicitly; and (iv) the 
delimitation,  instead of   blurring, of how far the \'etale-local cyclotomic statement is
from a global one (Remark \ref{remcoherence}). Theorem \ref{thmderived} is the further,
  new extension of (i)-(iv) to the derived setting, which requires the
connectivity hypothesis and Lemma \ref{lempi0azumaya} that have no classical analogue.

\subsection{Structure of the paper}
	This paper is organized as follows. Section \ref{ncbms} introduces the foundational setup for the noncommutative Bhatt-Morrow-Scholze (BMS) filtration and Moyal deformations, establishing the structural imperatives from both mathematical and physical perspectives.

 In Section \ref{azumayo}, we develop the theory of Azumaya descent for topological Hochschild homology, providing a filtration for Azumaya algebras over quasisyntomic rings.

Section \ref{doloto} transitions into higher algebra. It generalizes the descent theorem to derived Azumaya algebras over connective $\mathbb{E}_\infty$-rings within the setup of stable $\infty$-categories.

The underlying equivalence of $THH$-spectra in Section \ref{doloto} is assembled from
established $\infty$-categorical ingredients (the Morita invariance of $THH$ as a
localizing invariant, \'etale descent, and the triviality of the Picard action, due to
To\"en and Antieau-Gepner, generalizing the classical theorem of Corti\~nas and Weibel). 
  The contribution of that section is the derived generalization: the trace
 argument, the connectivity hypothesis it requires, and the delimitation of the
\'etale-local cyclotomic statement.

\section{The noncommutative BMS filtration and Moyal deformations}
	\label{ncbms}

	Let $k$ be a perfect field of characteristic $p>2$, let $A$ be a smooth commutative
	$k$-algebra with a constant Poisson structure (e.g., $A=k[x_1,\ldots,x_{2n}]$ with its
	symplectic bracket). Let $\mathcal{A}_\star$ denote a noncommutative deformation of
	$A$ by a formal parameter $\hbar$, arising by reduction modulo $p$ from a quantization
	defined over $W(k)[[\hbar]]$. This avoids the factorial denominators obstructing the
	naive Moyal formula in characteristic $p$. At first order, 
	$a\star b=ab+\frac{\hbar}{2}\{a,b\}+O(\hbar^2)$. We refer to \cite{Kaledin2016} for
	Kaledin noncommutative Hochschild-Witt complex,  relevant to extensions of
	the present setup.

	For commutative quasisyntomic $k$-algebras, Bhatt-Morrow-Scholze construct motivic
	filtrations on $THH$, $TC^-$, $TP$, and $TC$ via the following theorem due to Bhatt-Morrow-Scholze \cite{bms}. 

	\begin{theorem}
		\label{thmhm}
		Let $R$ be a quasisyntomic ring (for instance a smooth algebra over a perfect field $k$ of
		characteristic $p$). There are complete, exhaustive, multiplicative $\mathbb{Z}$-indexed
		filtrations $F^\bullet_{\mathrm{BMS}}$ on the $p$-completions of $THH(R)$, $TC^-(R)$, $TP(R)$,
		and $TC(R)$, functorial in $R$ and constructed compatibly with the cyclotomic
		structure maps $\varphi$ and $\mathrm{can}$,  with
		\[
			\mathrm{gr}^nTHH(R;\mathbb{Z}_p)\simeq\mathrm{gr}^n_N\widehat\Delta_R\{n\}[2n],
			\qquad
			\mathrm{gr}^nTC^-(R;\mathbb{Z}_p)\simeq\mathcal N^{\ge n}\widehat\Delta_R\{n\}[2n],
		\]
		\[
			\mathrm{gr}^nTP(R;\mathbb{Z}_p)\simeq\widehat\Delta_R\{n\}[2n],
		\]
		where $\widehat\Delta_R\{n\}$ is the ($n$-th Breuil-Kisin twisted) Nygaard-completed
		prismatic complex of $R$, $\mathcal N^{\ge n}$ its Nygaard filtration, and $\mathrm{gr}^n_N$
		the associated graded. 
\end{theorem}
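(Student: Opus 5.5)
The approach is the Bhatt-Morrow-Scholze one: construct the filtration on quasiregular semiperfectoid rings, where it is a Postnikov filtration, and then extend to all quasisyntomic $R$ by quasisyntomic descent.

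\emph{Step 1: the quasisyntomic site.} Every quasisyntomic ring $R$ admits a quasisyntomic cover $R\to S$ with $S$ quasiregular semiperfectoid. For instance, adjoin all $p$-power roots of a set of generators, giving $S=R\otimes_{\mathbb{Z}[x_i]}\mathbb{Z}[x_i^{1/p^\infty}]$, and $p$-complete. The key input here is flat descent for $THH(-;\mathbb{Z}_p)$: the functors $THH(-;\mathbb{Z}_p)$, $TC^-$, $TP$ and $TC$ are sheaves for the quasisyntomic topology. I would deduce this from flat descent for Hochschild homology via $THH\otimes_{THH(\mathbb{Z})}\mathbb{Z}\simeq HH$ and a convergent filtration, or via $\mathbb{S}[z]$-base change. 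Consequently the values on the unfolded site of quasiregular semiperfectoid rings determine everything.

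\emph{Step 2: evenness on quasiregular semiperfectoids.} For $S$ quasiregular semiperfectoid, I would show that $\pi_*THH(S;\mathbb{Z}_p)$ is concentrated in even degrees, and likewise for $TC^-$ and $TP$. Evenness for $THH$ comes from the Hochschild-Kostant-Rosenberg filtration together with the fact that $L_{S/\mathbb{Z}_p}[-1]$ is a flat module after $p$-completion: the relative cotangent complex over a perfectoid is concentrated in degree $1$. The homotopy fixed point and Tate spectral sequences then degenerate, since the terms are even. On such $S$ I define $F^n:=\tau_{\ge 2n}$, and the graded pieces are single homotopy groups placed in degree $2n$. I then identify $\pi_0TC^-(S;\mathbb{Z}_p)\cong\pi_0TP(S;\mathbb{Z}_p)=:\widehat\Delta_S$, the Nygaard-completed prism of $S$. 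Under this identification, $\pi_{2n}TP$ is the Breuil-Kisin twist $\widehat\Delta_S\{n\}$, $\pi_{2n}TC^-$ is $\mathcal N^{\ge n}\widehat\Delta_S\{n\}$ via $\mathrm{can}$, and $\pi_{2n}THH$ is $\mathrm{gr}^n_N$. This rests on comparing with the prismatic site, following Bhatt-Scholze \cite{bs}, through the universal property of $\widehat\Delta_S$ as the initial object among prisms over $S$ after Nygaard completion.

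\emph{Step 3: unfolding.} For general $R$ I set $F^n_{\mathrm{BMS}}THH(R;\mathbb{Z}_p):=\lim F^n THH(S^{\bullet};\mathbb{Z}_p)$ along a Čech nerve of quasiregular semiperfectoid covers, and similarly for the other theories. Each $S^{\bullet}$ term is again quasiregular semiperfectoid. The formal properties follow from the construction:
\begin{enumerate}
\item Functoriality and multiplicativity follow from those of the Postnikov filtration.
\item Compatibility with $\varphi$ and $\mathrm{can}$ holds because these maps preserve the double-speed Postnikov filtration on even spectra.
\item The graded pieces are the unfolded $\mathrm{gr}^n_N\widehat\Delta\{n\}[2n]$, and so on.
\item Exhaustiveness and completeness follow from completeness of the Postnikov filtration, combined with a uniform bound on the cohomological amplitude of the unfolded sheaves, which makes the limits commute.
\end{enumerate}
The $TC$ filtration is then defined as the fiber of $\varphi-\mathrm{can}$ on $F^nTC^-\to F^nTP$.

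\emph{Main obstacle.} The hardest step is Step 2, specifically identifying $\pi_0TC^-(S;\mathbb{Z}_p)$ with the Nygaard-completed prismatic cohomology and proving that the Nygaard filtration matches the image of $\pi_{2n}TC^-$. My first attempt would reduce to the perfectoid case, where $\pi_*TC^-$ is $A_{\inf}[u,v]/(uv-\xi)$ (Bökstedt periodicity). I would then use the affineness of the quasiregular semiperfectoid over a perfectoid base to compute relative to that base, with the Nygaard comparison checked through the conjugate filtration on $\mathrm{gr}$.
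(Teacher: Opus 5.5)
The paper gives no argument for this statement; it quotes it from \cite{bms}. Your outline follows the Bhatt--Morrow--Scholze route: quasisyntomic descent, evenness on quasiregular semiperfectoids, the double-speed Postnikov filtration, unfolding, and the comparison of \cite{bs} with Nygaard-completed prismatic cohomology. As written, however, Step 3 has a genuine gap.

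Descent for $THH(-;\mathbb{Z}_p)$, $TC^-$ and $TP$ (your Step 1) does not give descent for $\tau_{\ge 2n}THH(-;\mathbb{Z}_p)$ on quasiregular semiperfectoids, because truncation does not commute with totalization. For a qrsp cover, the Bousfield--Kan terms $E_2^{s,t}=H^s(\pi_tTHH(S^\bullet;\mathbb{Z}_p))$ with $s>0$ need not vanish a priori, and evenness of the abutment does not force them to. Consequently $\mathrm{Tot}\,\tau_{\ge 2n}THH(S^\bullet;\mathbb{Z}_p)$ a priori depends on the chosen cover. For $R$ itself qrsp and a nontrivial cover, it need not agree with $\tau_{\ge 2n}THH(R;\mathbb{Z}_p)$. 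So functoriality in $R$, and the identification of $\mathrm{gr}^n$ with the unfolded $\mathrm{gr}^n_N\widehat\Delta\{n\}[2n]$, do not simply ``follow from the construction''.

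The missing input, established in \cite{bms}, is that $S\mapsto\pi_{2n}THH(S;\mathbb{Z}_p)$, $\pi_{2n}TC^-(S;\mathbb{Z}_p)$ and $\pi_{2n}TP(S;\mathbb{Z}_p)$ are sheaves on the qrsp site with vanishing higher cohomology. Over a perfectoid base $R_0\to S$ this comes from the $u$-adic filtration. That filtration exhibits $\pi_{2n}THH(S;\mathbb{Z}_p)$ as an iterated extension of the $p$-completed $\Gamma^j_S(L_{S/R_0}[-1])$, $0\le j\le n$. Each of these is acyclic, being the single-degree value on qrsp rings of the quasisyntomic sheaf $\wedge^jL_{-/R_0}[-j]$. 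The complete homotopy fixed point and Tate filtrations then carry the statement over to $TC^-$ and $TP$.

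The same perfectoid base is what Step 2 is missing. The HKR filtration with $p$-completely flat $L_{S/R_0}[-1]$ (or $L_{S/\mathbb{Z}_p}[-1]$) only shows that Hochschild homology $HH(S/R_0;\mathbb{Z}_p)$ is even. To conclude evenness of $THH(S;\mathbb{Z}_p)$ you need three further ingredients:
\begin{itemize}
\item $\pi_*THH(R_0;\mathbb{Z}_p)\cong R_0[u]$ with $|u|=2$ (B\"okstedt periodicity for perfectoid rings);
\item the base change $THH(S;\mathbb{Z}_p)\otimes_{THH(R_0;\mathbb{Z}_p)}R_0\simeq HH(S/R_0;\mathbb{Z}_p)$;
\item an induction on degree along the cofiber sequence of multiplication by $u$.
\end{itemize}
Working relative to $\mathbb{Z}_p$ would not suffice directly, since $THH(\mathbb{Z}_p)$ has odd homotopy.

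Two smaller points. In item (4) no uniform amplitude bound is needed. Completeness is formal, since limits commute with $\mathrm{Tot}$. Exhaustiveness is also formal, since the cofiber $\mathrm{Tot}\,\tau_{<2n}$ is $(2n-1)$-truncated. Also, in mixed characteristic your cover $R\otimes_{\mathbb{Z}[x_i]}\mathbb{Z}[x_i^{1/p^\infty}]$ is quasiregular semiperfectoid only if it receives a map from a perfectoid ring, for example when $p$ is among the $x_i$.
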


Not that this motivic filtration is not  a
		filtration by cyclotomic subspectra of $THH$, see \cite{an, ar} for what a  
		cyclotomic refinement requires, and cf. Section \ref{physo}. 
The graded pieces of $TC$ are correspondingly given by syntomic
		cohomology. In characteristic $p$, $\widehat\Delta_R\{n\}$ is identified with Nygaard-completed
		crystalline cohomology, and when $R$ is smooth over $k$ its Nygaard-graded pieces are computed
		by the de Rham-Witt complex of $R$ 
\cite{illusie, illraynaud, ekedahl, cartier, di}.

	Since $THH(R;\mathbb{Z}_p)$ is connective, $F^n_{\mathrm{BMS}}THH(R;\mathbb{Z}_p)
	\simeq THH(R;\mathbb{Z}_p)$ for $n\le0$, thus the $\mathbb{Z}$-indexed filtration on
	$THH$ is exhaustive already from $n=0$. This is the convention used for
	$THH(A;\mathbb{Z}_p)$ in Definition \ref{deftransported} below. The filtrations on
	$TC^-(R;\mathbb{Z}_p)$ and $TP(R;\mathbb{Z}_p)$ use negative indices, as neither
	invariant is connective. Note that none of these graded pieces are, in
	general, 
Eilenberg-MacLane spectra on the individual de Rham-Witt modules $W\Omega_A^i$. 

	Extending this filtration to noncommutative deformations $\mathcal{A}_\star$ is the
	organizing problem of this paper. The obstacle is that a cyclotomic structure
	requires, for every $n\ge1$, a cyclotomic Frobenius $\varphi_p:THH(\mathcal{A}_\star)\to
	THH(\mathcal{A}_\star)^{tC_p}$ together with the $C_{p^n}$-equivariant restriction maps
	$R:THH(\mathcal{A}_\star)^{C_{p^n}}\to THH(\mathcal{A}_\star)^{C_{p^{n-1}}}$ that $\varphi_p$
	induces on fixed points. In the sense of Nikolaus-Scholze, $R$ is formally
	recovered from $\varphi_p$ for bounded-below spectra \cite{nik}. 
         But we still name the 
	two separately below since they have different sources and targets. 
These are two distinct
	pieces of structure, and we use
	``Frobenius'' only for the former below, satisfying
	coherences internal to equivariant stable homotopy theory that an ordinary ($A_\infty$
	or $L_\infty$) deformation-theoretic formalism for the star product does not capture,  
	a separate and harder problem than deforming $\mathcal{A}_\star$ at the $E_1$-level
	alone, which we do not resolve here. We get the resulting picture as a spectral
	sequence of schematic form
	\begin{equation}
		E_2^{i,j}=\mathcal{NC}\text{-}\mathrm{Prismatic}^{i,j}(\mathcal{A}_\star)
		\implies\pi_{i+j}THH(\mathcal{A}_\star;\mathbb{Z}_p).
	\end{equation}
	Here $\mathcal{NC}\text{-}\mathrm{Prismatic}^{i,j}$ an unconstructed noncommutative
	refinement of Bhatt-Morrow-Scholze crystalline reduction, targeting $THH$  instead of  
	$TR$ or $TC$. A spectral sequence converging to $TR_*$ or $TC_*$ would additionally
	require filtering the full cyclotomic structure, 
which is the same open problem. We do not 
	give this construction here. Sections \ref{azumayo}-\ref{doloto} instead develop, on
	the commutative side, the compatibilities any such theory would have to specialize to.

	Two motivations, one mathematical and one physical, underlie this picture. Mathematically,
	Hochschild and cyclic homology are the classical noncommutative extensions of the de Rham
	complex \cite{tabuada, bgt}.
  $\mathcal{NC}\text{-}\mathrm{Prismatic}$ is a candidate for 
	noncommutative-motivic refinement bridging this with Hesselholt-Madsen de Rham-Witt
	computations \cite{hmwitt}. Physically, the Moyal algebra $\mathcal{A}_\star$ models a
	two-dimensional electron gas in a magnetic field \cite{TKNN, kmmzz}. Reducing an integral
	Weyl algebra modulo $p$ (instead of   reducing the flux, which would first require
	choosing an arithmetic model for it) produces a modular Weyl algebra whose $p$-th powers
	become central. 
Namely, retaining the deformation while reducing modulo $p$ means
	base-changing only the coefficients, $\mathcal{A}_\star=A_\hbar\otimes_{W(k)}k$, a
	$k[[\hbar]]$-algebra. 
Tensoring instead over $W(k)[[\hbar]]$ would set $\hbar\mapsto0$
	and collapse the star product back to the commutative one. At the Frobenius-constant
	specialization relevant to the examples below (Remark \ref{remfrobconst}), the center of
	$\mathcal{A}_\star$ contains the subring generated by the $p$-th powers of all elements.
	On the locus, and under the finiteness hypotheses, 
needed for this containment to be an
	equality realizing $\mathcal{A}_\star$ as a finite module over that subring 
(verified for the Weyl algebra example below)  
	 this exhibits $\mathcal{A}_\star$ as an Azumaya algebra over the
	Frobenius twist $X^{(1)}$,  
the structure underlying crystalline differential operators in 
	characteristic $p$ \cite{bmr} and Kaledin noncommutative Cartier isomorphism
	\cite{Kaledin2016}.
 The quantum torus example of Section \ref{ssectorus} is Azumaya by a different, prime-to-$p$ mechanism. The center of the isomorphism
  is generated by $m$-th powers with
	$p\nmid m$, not by $p$-th powers, and it is not a Frobenius twist.  
	 
\subsection{Structural requirements for the noncommutative filtration}
	\label{physo}

	We mention here, for later reference, the structural properties any construction of the
	filtration proposed above would have to satisfy, together with the physical invariant
	such a filtration is meant to control.

	\smallskip
	\noindent{\rm(A)} \emph{Existence and compatibility.} For $\mathcal{A}_\star$ in the class of
	noncommutative deformations under consideration, there should exist a complete, exhaustive,
	multiplicative $\mathbb{Z}$-indexed filtration $F^\bullet$ on $THH(\mathcal{A}_\star;\mathbb{Z}_p)$
	 (and compatibly on $TC^-$, $TP$, $TC$) restricting to the Bhatt-Morrow-Scholze filtration of
	Theorem \ref{thmhm} whenever $\mathcal{A}_\star$ is commutative.

	\smallskip
	\noindent{\rm(B)} \emph{Morita invariance.} This filtration should depend only on
	$\mathrm{Perf}(\mathcal{A}_\star)$, i.e., be invariant under Morita equivalence of
	$\mathcal{A}_\star$, so that it descends to an invariant of the underlying noncommutative space.

	\smallskip
	\noindent{\rm(C)} \emph{Equivariant refinement.} The graded pieces
	$\mathrm{gr}^iF^\bullet THH(\mathcal{A}_\star;\mathbb{Z}_p)$ of {\rm(A)} lift, for every
	$n$, to $C_{p^n}$-equivariant spectra, compatibly with restriction to $C_{p^{n-1}}$
	and with the Morita equivalence of {\rm(B)}, so that the resulting fixed-point data
	assembles into a Mackey functor on the orbit category of $\mathbb{Z}/p^r$. This is a
	 separate requirement from {\rm(A)}-{\rm(B)}: neither supplies equivariant
	refinements, restriction/transfer maps, or an equivariant pairing with $K_0$.

	\smallskip
	For a gapped physical system modelled on such an algebra, for instance a two-dimensional
	electron gas of integer or fractional quantum Hall type \cite{TKNN, kmmzz}, the relevant
	invariant is built from the Fermi projection. The spectral projection
	$p_F\in M_\infty(\mathcal{A}_\star)$ onto states below the Fermi level, with class
	$[p_F]\in K_0(\mathcal{A}_\star)$.
 Given {\rm(A)}-{\rm(C)}, the equivariant refinements
	of {\rm(C)}, together with the pairing of $[p_F]$ with the cyclic and syntomic classes
	of {\rm(A)}, assemble into a Mackey functor which we denote
	$\mathcal{I}^i_r(\mathcal{A}_\star)$. 
 Its restriction and transfer maps are built from,
	but should not be identified with, the cyclotomic restriction $R$ and the
	cyclotomic Frobenius $\varphi_p$.  
 Note that {\rm(C)} asks only that they agree after passing to the
	associated graded, where the motivic filtration is in general not a filtration by
	cyclotomic subspectra \cite{an, ar}. For the closely related equivariant motivic filtration of
	$THH$ of polynomial algebras, from which such Mackey functors can be extracted,
 	see \cite{HKS, hkszou}; for a Mackey-functor refinement of the commutative BMS filtration, see \cite{HVLZ}.

		Suppose {\rm(A)}-{\rm(C)} hold for $\mathcal{A}_\star$, and let
		$(\mathcal{A}_\star,[p_F])$ and $(\mathcal{A}_\star',[p_F'])$ be two marked gapped systems
		related by a Morita equivalence under which $[p_F]$ 
transports to $[p_F']$. Note that the class of a Fermi projection is not Morita-invariant data. 
It must be specified which class on one side the equivalence carries to which class on the
		other. 

	\begin{proposition}
		\label{propinvariance}
The specified Morita equivalence carries the pairing of $[p_F]$
		 with the transported cyclic and syntomic classes to the pairing of $[p_F']$, and
		identifies the Mackey functors $\mathcal{I}^i_r(\mathcal{A}_\star)\simeq
		\mathcal{I}^i_r(\mathcal{A}_\star')$.
	\end{proposition}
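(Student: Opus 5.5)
The argument is a naturality argument. I would avoid any explicit computation and simply follow the specified Morita equivalence through each ingredient entering the construction of $\mathcal{I}^i_r$.

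Let $F:\mathrm{Perf}(\mathcal{A}_\star)\xrightarrow{\sim}\mathrm{Perf}(\mathcal{A}_\star')$ be the specified equivalence, with $F_*[p_F]=[p_F']$ in $K_0$. I would proceed in four steps.

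\textbf{Step 1.} Since $K$-theory and $THH$ (with $TC^-$, $TP$, $TC$) are localizing invariants in the sense of Blumberg--Gepner--Tabuada, $F$ induces equivalences
\[
F_*:K(\mathcal{A}_\star)\simeq K(\mathcal{A}_\star'),\qquad F_*:THH(\mathcal{A}_\star;\mathbb{Z}_p)\simeq THH(\mathcal{A}_\star';\mathbb{Z}_p).
\]
These equivalences are compatible with the cyclotomic trace $K\to TC\to TC^-\to THH$. The reason is that the trace is a natural transformation of functors on small stable idempotent-complete $\infty$-categories. Consequently, the pairing of $[p_F]$ with any class in $TC$, $TC^-$ or $TP$ (obtained from the trace and the module structure over $K$) is carried by $F_*$ to the corresponding pairing of $F_*[p_F]=[p_F']$ with the $F_*$-image of that class.

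\textbf{Step 2.} By hypothesis (B), the filtration $F^\bullet$ of (A) depends only on $\mathrm{Perf}$. Hence $F_*$ is a filtered equivalence, and the transported cyclic and syntomic classes on one side correspond to those on the other. This gives the first assertion of the proposition.

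\textbf{Step 3.} By (C), the equivariant refinements of $\mathrm{gr}^iF^\bullet$ are compatible with restriction to $C_{p^{n-1}}$ and with the Morita equivalence of (B). So $F$ induces $C_{p^n}$-equivariant equivalences of the lifts, commuting with restriction. The transfers are then matched as well, by their adjunction with restriction at the level of genuine equivariant spectra.

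\textbf{Step 4.} The Mackey functor $\mathcal{I}^i_r$ is assembled functorially from two inputs: the equivariant lifts of Step 3, and the pairing with $[p_F]$ of Step 1. A levelwise equivalence compatible with restriction and transfer is therefore an isomorphism of Mackey functors on the orbit category of $\mathbb{Z}/p^r$. This gives $\mathcal{I}^i_r(\mathcal{A}_\star)\simeq\mathcal{I}^i_r(\mathcal{A}_\star')$.

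The main obstacle is Step 3. Hypothesis (C) only asks for agreement on the associated graded, where the motivic filtration is not a filtration by cyclotomic subspectra. One must check that the equivariant pairing with $K_0$ is also natural on the graded pieces, and not merely on $THH$ itself. My first attempt would be to build the pairing on $\mathrm{gr}^i$ from the filtered multiplicative structure of (A), with $K$ placed in filtration degree $0$. The pairing then becomes a map of filtered objects and passes to graded pieces automatically.

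If this does not hold in the available generality, I would read (C) as \emph{including} the compatibility of the pairing with the Morita equivalence. The proposition would then be a formal consequence of the hypotheses, and I would state it explicitly as such.
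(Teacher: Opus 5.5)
This is essentially the paper's argument, which is purely formal: it reads (C) as supplying the equivariant lifts, the restriction/transfer maps and the equivariant pairing with $K_0$ \emph{together with} their compatibility with the Morita equivalence of (B), which is exactly your fallback reading, so that with $F_*[p_F]=[p_F']$ everything transports by naturality (your Step~1 via localizing invariants and naturality of the trace is a harmless addition the paper does not spell out). Your proposed first attempt at Step~3 should be dropped rather than tried, because for $\mathcal{A}_\star$ only associative neither $THH(\mathcal{A}_\star)$ nor $K(\mathcal{A}_\star)$ carries a multiplication (the paper itself retreats to the module-valued form of (A) in Proposition~\ref{propunconditional}), so there is no filtered multiplicative structure from which to build the pairing on $\mathrm{gr}^i$.
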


	\begin{proof}
		{\rm(A)} supplies the filtration and its graded pieces, the classes paired  with  a
		Fermi projection. {\rm(C)} supplies the $C_{p^n}$-equivariant structure from which
		$\mathcal{I}^i_r$ is assembled and its compatibility with the Morita equivalence of
		{\rm(B)}. Since $[p_F']$ was taken to be the transport of $[p_F]$, the identification
		of {\rm(B)}-{\rm(C)} carries the pairing and the Mackey functor for $[p_F]$ to those
		for $[p_F']$.
	\end{proof}

	We do not verify {\rm(A)}-{\rm(C)} in general in this note. 
Supplying them is exactly the
	content that a construction of $\mathcal{NC}\text{-}\mathrm{Prismatic}^{i,j}$ would have to
	deliver. 
On can compare \cite{Kadeishvili1988} for the classical $A_\infty$-algebra structure on Hochschild cohomology, and \cite{Le2026} for related cyclic/$C_\infty$-structures.  Theorem
	\ref{thmazumaya} below establishes {\rm(A)}-{\rm(B)}, and the \'etale-local cyclotomic
	comparison of the ambient $THH$ spectra entering
	{\rm(C)}, when $\mathcal{A}_\star$ is an Azumaya algebra over a quasisyntomic center
	(Proposition \ref{propunconditional}).

\section{Azumaya descent: a noncommutative filtration}
	\label{azumayo}

	The noncommutative motivic filtration postulated in Sections \ref{ncbms} and
	\ref{physo} was introduced there as a working assumption. In this section we
	observe that for a natural class of noncommutative algebras (Azumaya
	algebras over quasisyntomic commutative rings, a class containing the modular
	Weyl algebras of Section \ref{ssecweyl} and the quantum tori at flux of order prime to
	$p$)  no assumption is necessary. Topological Hochschild homology descends, as an underlying spectrum and \'etale-locally as a cyclotomic
	spectrum, to the center, and the Bhatt-Morrow-Scholze filtration of the center
	transports to a filtration on $THH$ of the noncommutative algebra.
	For this class, structural assumptions {\rm(A)}-{\rm(B)} of Section \ref{physo}
	become theorems, and the \'etale-local cyclotomic comparison of the ambient $THH$
	spectra entering {\rm(C)} does as well
	(Proposition \ref{propunconditional}).

	No characteristic or finiteness hypotheses are needed for this theorem or its proof. 	      They enter only through Theorem \ref{thmhm}, in Definition \ref{deftransported} below,
	where transporting the filtration  requires $Z$ quasisyntomic.

	In this paper, $THH(-)$ denotes topological Hochschild homology with its
	cyclotomic structure in the sense of Nikolaus-Scholze \cite{nik}, regarded
	as an invariant of stable $\infty$-categories via
	$THH(A)=THH(\mathrm{Perf}\,A)$ \cite{bgt, hss}. All tensor products are
	derived. Since Azumaya algebras are finite projective over their centers,
	derived and underived tensor products agree in all the situations below. 
	For an ordinary (classical) Azumaya $Z$-algebra $A$, finite projectivity of $A$ as a
	$Z$-module means that 
$A$ is a direct summand of a finite free $Z$-module, hence a compact
	(perfect, of Tor-amplitude $[0,0]$) $Z$-module.
 For such a module, $-\otimes_ZA$ is an
	exact functor on ordinary $Z$-modules (no higher $\mathrm{Tor}$ is produced, since a
	direct summand of a free module is flat). 
Thus, the derived tensor product $A\otimes_Z^LA$
	computed by resolving $A$ has vanishing higher Tor-terms and agrees, as a discrete
	$Z$-module placed in degree $0$, with the ordinary (underived) tensor product
	$A\otimes_ZA$. 
 The same reasoning applies to $A\otimes_Z^LA^{\mathrm{op}}$
	and to $\mathrm{End}_Z(A)=\mathrm{Hom}_Z(A,A)$, since $A$ perfect of Tor-amplitude
	$[0,0]$ also makes $\mathrm{RHom}_Z(A,-)$ agree with its underived counterpart on
	discrete modules. Consequently, the algebra isomorphism $A\otimes_ZA^{\mathrm{op}}
	\cong\mathrm{End}_Z(A)$ defining Azumaya property, stated classically with underived
	tensor products, is simultaneously a statement about derived tensor products with no
	further hypothesis (see, e.g., in the proof of Theorem \ref{thmazumaya}).

	\smallskip
	\textbf{Notational convention.} We fix the
	following convention distinguishing $THH(A)$ from $THH(A;\mathbb{Z}_p)$: $THH(A)$ denotes
	the integral (uncompleted) topological Hochschild homology spectrum, while
	$THH(A;\mathbb{Z}_p):=THH(A)^\wedge_p$ denotes its $p$-completion. Statements about
	$\Phi_A$, $\tau_P$, and \'etale base change (Theorems \ref{thmazumaya}, 
\ref{thmderived}, 
	and the lemmas of this subsection) are integral statements and are written with
	$THH(-)$. Statements about the Bhatt-Morrow-Scholze filtration, the transported
	filtration of Definition \ref{deftransported}, and $TC^-$, $TP$, $TC$ (which are only
	defined, or only of interest, after $p$-completion) are written with
	$THH(-;\mathbb{Z}_p)$, since $p$-completion is exactly the hypothesis under which
	Theorem \ref{thmhm} filtration exists. Where an integral statement is immediately
	followed by its $p$-completed consequence (e.g., in the proof of Theorem
	\ref{thmazumaya}), we complete both sides of an established integral equivalence 
 instead of claiming a separate $p$-completed argument, since $p$-completion is a functor
	applied to that equivalence.

\subsection{Statement}
	Recall that an associative unital algebra $A$ over a commutative ring $Z$ is
	\emph{Azumaya} if $A$ is a finite projective faithful $Z$-module and the
	natural map $A\otimes_Z A^{\mathrm{op}}\to\mathrm{End}_Z(A)$ is an
	isomorphism. Equivalently, $A$ is \'etale-locally on $\operatorname{Spec}Z$
	isomorphic to a matrix algebra \cite{grothbrauer, milne}. The degree of
	$A$ is the locally constant function $\deg A$ with $\mathrm{rk}_Z A=(\deg
	A)^2$, and the reduced trace $\mathrm{trd}:A\to Z$ induces an isomorphism
	$A/[A,A]\xrightarrow{\ \sim\ }Z$ \cite{ko}.

	We keep three distinct levels of ``trace'' in what follows, since
	mixing them is a source of confusion: {\rm(a)} the algebraic 
	reduced trace $\mathrm{trd}:A/[A,A]\xrightarrow{\sim}Z$ just recalled, an isomorphism
	of ordinary $Z$-modules; {\rm(b)} the induced map on $\pi_0THH$, i.e., the
	isomorphism $\pi_0THH(A)=HH_0(A)=A/[A,A]\xrightarrow{\sim}Z=\pi_0THH(Z)$ obtained by
	applying $\pi_0$ to (a) under the identification $\pi_0THH(-)=HH_0(-)$; and {\rm(c)}
	the \emph{chosen spectrum-level equivalence} $\Phi_A:THH(A)\xrightarrow{\sim}THH(Z)$
	of Theorem \ref{thmazumaya}(1) below, whose effect on $\pi_0$ recovers (b) but which
	is  extra data not determined by (a) or (b) alone (Lemma
	\ref{leminvtrivial}\,(iii)). 
 Level (a) is purely algebraic and needs no homotopy
	theory. Level (b) is a statement about the discrete invariant $\pi_0$ of a spectrum.
	Level (c) is the spectrum-level statement, unique only up to the
	automorphism ambiguity described in Lemma \ref{leminvtrivial}. The central unit map
	$THH(\iota):THH(Z)\to THH(A)$ (Remark \ref{remunit}) is not a candidate for (c): its
	composite with any choice of (c) is multiplication by $\deg(A)$ already at level (b).

The Azumaya descent theorem for $THH$ now reads as follows.

	\begin{theorem}
		\label{thmazumaya}
		Let $Z$ be a commutative ring and let $A$ be an Azumaya $Z$-algebra. Then

		\smallskip
		\noindent{\rm(1)} there is an equivalence of underlying spectra
		\begin{equation}
			\Phi_A:THH(A)\xrightarrow{\ \sim\ }THH(Z),
		\end{equation}
		linear over the $\mathbb{E}_\infty$-ring $THH(Z)$, whose $\pi_0$ is the isomorphism
		$HH_0(A)=A/[A,A]\xrightarrow{\ \mathrm{trd}\ }Z$ induced by the reduced trace. 

		\smallskip
		\noindent{\rm(2)} $\Phi_A$ is compatible with \'etale base change; 

		\smallskip
		\noindent{\rm(3)}  for every faithfully flat
		\'etale splitting cover $Z\to Z'$ trivializing $A$, there are induced equivalences (after this base change)   
$TR^r(A\otimes_ZZ';p)\simeq TR^r(Z';p)$ for all $r\ge1$,
		$TC^-(A\otimes_ZZ';\mathbb{Z}_p)\simeq
	TC^-(Z';\mathbb{Z}_p)$, $TP(A\otimes_ZZ';\mathbb{Z}_p)\simeq TP(Z';\mathbb{Z}_p)$,
 and
		$TC(A\otimes_ZZ';\mathbb{Z}_p)\simeq TC(Z';\mathbb{Z}_p)$, compatibly with the
		restriction, Frobenius, and Verschiebung operators of the $TR$-system, and globally on $A$  whenever $A$ admits a global splitting
		module, i.e., whenever the Brauer class $[A]\in\mathrm{Br}(Z)$
		vanishes.
	\end{theorem}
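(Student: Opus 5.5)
The plan is to deduce everything from Morita invariance of $THH$ as a localizing invariant \cite{bgt}, applied to $\mathrm{Perf}(A)$ viewed as a module over $\mathrm{Perf}(Z)$. The key observation is that $A\otimes_Z A^{\mathrm{op}}\cong\mathrm{End}_Z(A)$ (derived and underived agree, as explained above). Hence $\mathrm{Perf}(A)$ is an invertible object of $\mathrm{Mod}_{\mathrm{Perf}(Z)}(\mathrm{Cat}^{\mathrm{perf}})$, with inverse $\mathrm{Perf}(A^{\mathrm{op}})$. Because $THH$ is symmetric monoidal, $THH(A)$ is then an invertible $THH(Z)$-module. The same fact makes linearity over the $\mathbb{E}_\infty$-ring $THH(Z)$ in (1) automatic, since all maps are produced inside $THH(Z)$-modules.

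To obtain (1), I would show that this invertible module is free of rank one. First I would pass to an étale cover $Z\to Z'$ splitting $A$, so that $A'=A\otimes_ZZ'\cong\mathrm{End}_{Z'}(P)$. Morita invariance via the bimodule $P$ then gives $THH(A')\simeq THH(Z')$. Étale base change for $THH$ (the identification $THH(Z')\simeq THH(Z)\otimes_{THH(Z)}\dots$, namely $THH(A\otimes_ZZ')\simeq THH(A)\otimes_{THH(Z)}THH(Z')$ for étale $Z\to Z'$) shows that $THH(A)$ is étale-locally free of rank one. So it is a point of $\mathrm{Pic}(THH(Z))$ trivialized étale-locally. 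Such a point is determined by a class in $\pi_0$: an invertible module $L$ over a connective $\mathbb{E}_\infty$-ring is free as soon as $\pi_0L$ is a free $\pi_0$-module of rank one generated in degree $0$. I would verify this via the trace. Indeed, $\pi_0THH(A)=A/[A,A]\xrightarrow{\mathrm{trd}}Z$ is an isomorphism, so the class $x\in\pi_0THH(A)$ with $\mathrm{trd}(x)=1$ defines a $THH(Z)$-module map $THH(Z)\to THH(A)$. This map is an isomorphism on $\pi_0$, and étale-locally it is an equivalence, because the local Morita equivalence has $\pi_0$ equal to the ordinary trace, which agrees with $\mathrm{trd}$ on a split algebra. Étale descent for equivalences then makes it a global equivalence, and I take $\Phi_A$ to be its inverse; by construction its $\pi_0$ is $\mathrm{trd}$. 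The step I expect to be the main obstacle is this local-to-global assertion, specifically comparing $\pi_0$ of the local Morita trace with the reduced trace and checking that the lift $x$ behaves well. I would reduce it to connectivity of $THH$ and a Nakayama-type argument on $\pi_0$.

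Part (2) is then formal. Both $\mathrm{trd}$ and the chosen generator $x$ are stable under base change, and the map $THH(Z)\to THH(A)$ commutes with the base-change identifications, so $\Phi_{A\otimes_ZZ'}=\Phi_A\otimes_{THH(Z)}THH(Z')$. For (3), on a splitting cover I would not use $\Phi$ at all, but rather the Morita equivalence $\mathrm{Perf}(A')\simeq\mathrm{Perf}(Z')$ given by $P$. This is an exact equivalence of stable $\infty$-categories, so it induces an equivalence of cyclotomic spectra $THH(A')\simeq THH(Z')$ in the sense of Nikolaus--Scholze. Applying the functors $TR^r$, $TC^-$, $TP$, and $TC$, all of which are functorial in cyclotomic spectra, gives the equivalences compatibly with $R$, $F$, and $V$. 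If $[A]=0$, a global $P$ exists and the same argument runs over $Z$ itself. I will note explicitly that this Morita trace is claimed to agree with $\Phi_A$ only on $\pi_0$.
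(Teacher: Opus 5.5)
Your argument is correct. Parts (2) and (3) follow the paper: in (3) both you and the paper apply $TR^r$, $TC^-$, $TP$, $TC$ to the cyclotomic local Morita trace $\tau_P$ and keep it separate from $\Phi_A$. For part (1), however, you take a genuinely different route.

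The paper checks nothing locally for (1). It obtains invertibility of $THH(A)$ from Lemma \ref{lemkunneth} and $\tau_A$. It then invokes the classification of connective invertible modules with connective inverse over $THH(Z)$ (Lemma \ref{leminvtrivial}, via Antieau--Gepner) to get some $THH(Z)$-linear equivalence, and normalizes it afterwards so that $\pi_0\Phi_A=\mathrm{trd}$.

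You instead write down the global $THH(Z)$-linear map $f\colon THH(Z)\to THH(A)$ classified by $x=\mathrm{trd}^{-1}(1)\in\pi_0THH(A)$, and test it on a faithfully flat \'etale splitting $Z\to Z'$. One step should be stated explicitly. The map $\tau_P$ is $THH(Z')$-linear, because the bimodule $P$ is $Z'$-linear. Hence $\tau_P\circ(f\otimes_ZZ')$ is a $THH(Z')$-linear endomorphism of the free module, i.e.\ multiplication by $\tau_P(x')=\mathrm{trd}(x')=1$. No Nakayama-type argument is needed. Next, $-\otimes_{THH(Z)}THH(Z')\simeq-\otimes_ZZ'$ by Lemma \ref{lemetale}, so the cofiber $C$ of $f$ satisfies $\pi_*C\otimes_ZZ'\cong\pi_*(C\otimes_ZZ')=0$. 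Faithful flatness then gives $C\simeq0$.

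Descending the \emph{property} of being an equivalence for a globally defined map involves none of the coherence issues of Remark \ref{remcoherence}. Note also that this explicit argument uses neither the invertibility of $THH(A)$ nor the general freeness criterion you quote.

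What your route buys:
\begin{itemize}
\item It bypasses the Picard classification.
\item Since $\pi_0\mathrm{Map}_{THH(Z)}(THH(Z),THH(A))\cong\pi_0THH(A)$ and $x$ is unique, the homotopy class of $\Phi_A$ is pinned down by $\pi_0\Phi_A=\mathrm{trd}$.
\item Consequently your recipe itself commutes with base change, and $\Phi_A\otimes_ZZ'\simeq\tau_P$ as $THH(Z')$-module maps.
\end{itemize}
In other words, the ambiguity group $GL_1(R)_1$ of Lemma \ref{leminvtrivial}(iii) is connected, so it affects only higher homotopies, not homotopy classes. The paper's route, in exchange, uses no splitting cover in proving (1). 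It is also the template the paper reuses in the derived setting of Theorem \ref{thmderived}.
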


The following remarks are in order. 

In {\rm(1)}, we do not claim that $\Phi_A$ is \'etale-locally an equivalence of cyclotomic
		spectra (see the discussion following the proof). 
\'Etale-locally holds separately, and it is  cyclotomic comparison (the Morita trace of Lemma
		\ref{lemmorita-pic}\,{\rm(i)}), agreeing with $\Phi_A$ only on $\pi_0$. 

In {\rm(2)}, $\Phi_A$ is compatible with \'etale base change in the 
		 following sense. 
Suppose we fix a single global choice of $\Phi_A$ as in part 
		{\rm(1)}. 
Its restriction $\Phi_A\otimes_ZZ'$ along the base-change equivalences of
		Lemma \ref{lemetale} is an equivalence
		$THH(A\otimes_ZZ')\xrightarrow{\sim}THH(Z')$. 
It is  inducing the reduced trace on $\pi_0$. 
		I.e., it is a valid choice of $\Phi_{A\otimes_ZZ'}$ in the sense of part 
{\rm(1)}. 
This holds for every \'etale $Z\to Z'$ simultaneously, compatibly with further
		base change, simply because restriction along a fixed map of base-change
		equivalences is functorial in $Z'$. We do not claim that $\Phi_A$, 
		or the resulting compatible system, is canonical or unique. 
But having fixed one choice, the induced system of restrictions is automatically coherent.

Note that {\rm(3)} is an additional statement, not a consequence of {\rm(1)}-{\rm(2)}. 
It appears via the independent local Morita trace $\tau_P$ of Lemma
		\ref{lemmorita-pic}, which is  cyclotomic after an \'etale splitting (as
		opposed to $\Phi_A$ of parts {\rm(1)}-{\rm(2)}, which is not claimed to be
		cyclotomic (Remark \ref{remscopenarrow})). 
The induced equivalences after base change in {\rm(3)} are 
not sheaf-level statements on the \'etale site, and not global statements on $A$. 

	It is important to note the scope of the theorem. 
As it is mentioned in Remark \ref{remunit}, the equivalence $\Phi_A$ is not induced by the unit inclusion $Z\hookrightarrow A$.

	\begin{remark}
		\label{remscopenarrow}
 Let us comment also on the scope of the \'etale-local cyclotomic statement. 
		We mention that part {\rm(1)} above is weaker than it may previously have
		appeared. $\Phi_A$  is not shown to be \'etale-locally cyclotomic
		anywhere that $A$ is not globally split. The cyclotomic, \'etale-local
		statement is supplied by the separate Morita trace $\tau_P$ of Lemma
		\ref{lemmorita-pic}\,{\rm(i)}, which agrees with $\Phi_A$ only on $\pi_0$ (Remark
		\ref{remcoherence}). 
All \'etale-local claims below about $TR^r$, $TC^-$, $TP$, and $TC$ (part {\rm(3)} of the theorem, and their analogues in Theorem
		\ref{thmderived}) 
are expressed through $\tau_P$, not through $\Phi_A$. 
 One should not assume that $\Phi_A$ is cyclotomic unless $A$ is globally split, i.e., $[A]=0\in\mathrm{Br}(Z)$. 
	\end{remark}

	Given the theorem, we may transport the motivic filtration of Theorem
	\ref{thmhm}. 
Let $Z$ be quasisyntomic (e.g., smooth over a perfect field $k$ of
		characteristic $p$). Let $A$ be an Azumaya $Z$-algebra, and fix a choice of
		$\Phi_A$ as in Theorem \ref{thmazumaya}{\rm(}1{\rm)}.
Recall this choice is not unique according to Lemma \ref{leminvtrivial}.  

	\begin{definition}
		\label{deftransported}
	The \emph{transported noncommutative BMS filtration} 
(or \emph{transported filtration} for short)   
 on $THH(A;\mathbb{Z}_p)$, computed as a pullback
		of underlying $THH(Z;\mathbb{Z}_p)$-modules in $\mathrm{Mod}_{THH(Z;\mathbb{Z}_p)}$
		(equivalently, of underlying spectra, since the pullback diagram and its terms
		agree either way), is
		\begin{equation}
			F^nTHH(A;\mathbb{Z}_p):=THH(A;\mathbb{Z}_p)
			\times_{THH(Z;\mathbb{Z}_p)}F^n_{\mathrm{BMS}}THH(Z;\mathbb{Z}_p),
			\qquad n\ge 0.
		\end{equation}
		Here $F^\bullet_{\mathrm{BMS}}$ is the filtration of
		Bhatt-Morrow-Scholze \cite{bms} recalled in Theorem \ref{thmhm}. The pullback is
		formed along $\Phi_A$ on the left leg. It is not claimed that 
 this filtration is independent 
		of the chosen $\Phi_A$ up to a specified equivalence, see Proposition
		\ref{proptransported}. We call $F^\bullet THH(A;\mathbb{Z}_p)$ a filtered
		$THH(Z;\mathbb{Z}_p)$-module, not a multiplicative filtration on $THH(A)$:  unlike $THH(Z)$, for $A$ only associative, $THH(A)$ carries no canonical multiplication of its 
		compatible with $F^\bullet$. Similarly, after any faithfully flat \'etale splitting $Z\to Z'$,
		the cyclotomic Morita equivalence transports the BMS filtrations on
		$TC^-$, $TP$, and $TC$ from $Z'$ to $A\otimes_ZZ'$. These transported
		filtrations are global whenever $A$ admits a global splitting module.
		No global transport on $TC^-$, $TP$, or $TC$ is claimed in general.
	\end{definition}

Note that we adopt the term \emph{"transported noncommutative BMS filtration"}  
  to make clear that the construction depends on the chosen $\Phi_A$ and is not claimed
		to be an intrinsically defined, cyclotomic noncommutative filtration in its 
		right, since the full $C_{p^n}$-equivariant/Mackey refinement of Assumption {\rm(C)} is not established.

	\begin{proposition}
		\label{proptransported}
		The transported filtration of Definition \ref{deftransported} is complete and
		exhaustive, and consists of
		$F^\bullet_{\mathrm{BMS}}THH(Z;\mathbb{Z}_p)$-modules.  
It has graded pieces
		\begin{equation}
			\mathrm{gr}^nTHH(A;\mathbb{Z}_p)\;\simeq\;
			\mathrm{gr}_{\mathcal N}^n\widehat{\Delta}_Z\{n\}[2n].
		\end{equation}
 It is functorial for \'etale base change in the pair
		$(Z,A)$, compatibly transporting the chosen $\Phi_A$. 
It is a Brauer invariant in the
		following sense: if $A$ and $A'$ are Azumaya $Z$-algebras
		with $[A]=[A']\in\mathrm{Br}(Z)$, then some Morita equivalence between them
		identifies the two transported filtrations up to the same ambiguity.
	\end{proposition}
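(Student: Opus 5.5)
The plan is to exploit that the pullback along an equivalence is essentially a change of name. Since $\Phi_A$ is an equivalence (Theorem \ref{thmazumaya}(1)), its $p$-completion $\Phi_A^\wedge:THH(A;\mathbb{Z}_p)\to THH(Z;\mathbb{Z}_p)$ is again an equivalence. The base change of an equivalence along any map is an equivalence, so the projection
\[
F^nTHH(A;\mathbb{Z}_p)\to F^n_{\mathrm{BMS}}THH(Z;\mathbb{Z}_p)
\]
is an equivalence for every $n\ge0$. This identification is compatible with the transition maps $F^{n+1}\to F^n$, since pullback is functorial in the right leg. So the whole filtered object $F^\bullet THH(A;\mathbb{Z}_p)$ is equivalent, as a filtered $THH(Z;\mathbb{Z}_p)$-module, to $F^{\ge0}_{\mathrm{BMS}}THH(Z;\mathbb{Z}_p)$, with $THH(A;\mathbb{Z}_p)$ identified with $THH(Z;\mathbb{Z}_p)$ through $\Phi_A^\wedge$.

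From this equivalence I read off the formal properties.

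\textbf{Module structure.} Since $\Phi_A$ is $THH(Z)$-linear, the pullback is formed in $\mathrm{Mod}_{THH(Z;\mathbb{Z}_p)}$. The filtered module structure over $F^\bullet_{\mathrm{BMS}}THH(Z;\mathbb{Z}_p)$ is then transported from the multiplicative filtration of Theorem \ref{thmhm}, which makes each $F^n_{\mathrm{BMS}}$ a filtered module over itself.

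\textbf{Completeness.} Limits commute with pullbacks, so $\lim_n F^n THH(A;\mathbb{Z}_p)$ is the pullback of $\lim_n F^n_{\mathrm{BMS}}=0$, which is $0$.

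\textbf{Exhaustiveness.} For $n=0$ we have $F^0_{\mathrm{BMS}}=THH(Z;\mathbb{Z}_p)$ by connectivity, as noted after Theorem \ref{thmhm}. Hence $F^0THH(A;\mathbb{Z}_p)=THH(A;\mathbb{Z}_p)$.

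\textbf{Graded pieces.} Cofibers commute with pullbacks in a stable category, so
\[
\mathrm{gr}^n THH(A;\mathbb{Z}_p)\simeq\mathrm{gr}^n_{\mathrm{BMS}}THH(Z;\mathbb{Z}_p)\simeq\mathrm{gr}^n_{\mathcal N}\widehat\Delta_Z\{n\}[2n]
\]
by Theorem \ref{thmhm}. This step needs $Z$ to be quasisyntomic.

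\textbf{\'Etale functoriality.} Given an \'etale map $Z\to Z'$, I use Theorem \ref{thmazumaya}(2): the restriction $\Phi_A\otimes_ZZ'$ is a valid choice of $\Phi_{A\otimes_ZZ'}$. I combine this with the functoriality of $F^\bullet_{\mathrm{BMS}}$ in $Z$ and the base-change maps of Lemma \ref{lemetale}. This gives a commutative cospan map from the pullback diagram for $(Z,A)$ to the one for $(Z',A\otimes_ZZ')$, hence a filtered map between the pullbacks. Coherence for composites $Z\to Z'\to Z''$ follows as in the remark after Theorem \ref{thmazumaya}, because everything is restriction along a fixed choice. One should check that $Z'$ is again quasisyntomic; \'etale maps preserve quasisyntomicity, so this holds.

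\textbf{Brauer invariance.} If $[A]=[A']$, choose a Morita equivalence $\mathrm{Perf}(A)\simeq\mathrm{Perf}(A')$ over $Z$, given by an invertible bimodule, as in Lemma \ref{lemmorita-pic}. It induces a $THH(Z)$-linear equivalence $\mu:THH(A)\simeq THH(A')$. Then $\Phi_{A'}\circ\mu$ is an equivalence $THH(A)\to THH(Z)$. I need its $\pi_0$ to be the reduced trace. On $\pi_0$, a Morita equivalence of Azumaya algebras over $Z$ is compatible with the reduced traces, because both induce the canonical identification of $HH_0$ with $Z$ by \'etale-local reduction to matrix algebras. So $\Phi_{A'}\circ\mu$ is an admissible choice of $\Phi_A$, and the two transported filtrations are identified through $\mu$ modulo the automorphism ambiguity of Lemma \ref{leminvtrivial}.

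I expect this last step to be the main obstacle. It requires pinning down the $\pi_0$ effect of the Morita trace, namely the agreement of $\tau_P$ with $\mathrm{trd}$ recorded in Remark \ref{remcoherence}. It also requires phrasing ``up to the same ambiguity'' precisely. The claim is only that the set of filtrations arising from admissible choices of $\Phi_A$ is carried onto the corresponding set for $A'$, not that a fixed pair of choices match. The earlier parts are formal consequences of stability and of Theorem \ref{thmhm}.
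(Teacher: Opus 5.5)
Your proposal is correct and follows the paper's route. Every formal property (module structure, completeness, exhaustiveness, graded pieces) is inherited through the pullback along the equivalence $\Phi_A$, étale functoriality comes from Theorem \ref{thmazumaya}(2), and Brauer invariance comes from a $Z$-linear Morita equivalence. Your treatment of the last step, checking that $\Phi_{A'}\circ\mu$ induces $\mathrm{trd}$ on $\pi_0$ and is therefore an admissible choice of $\Phi_A$, is a slightly sharper version of the paper's appeal to compatibility with the local traces via Lemma \ref{lemmorita-pic}. It works provided you take the Morita bimodule discrete, since a derived equivalence involving an odd shift would act by $-1$ on $\pi_0$.
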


Note that the first properties in Proposition above are  inherited from the corresponding properties of $F^\bullet_{\mathrm{BMS}}$ under pullback
		along $\Phi_A$, 
since pullback is along an equivalence that preserves completeness and
		exhaustiveness.  
Two choices of $\Phi_A$ inducing the same reduced trace on $\pi_0$ give transported
		filtrations related by the automorphism of $THH(Z;\mathbb{Z}_p)$ by which the two
		choices differ (Lemma \ref{leminvtrivial}). 
It is not claimed this automorphism is the
		identity. 
Therefore, we do not call the filtration canonical, but only well defined up to this ambiguity. 
 We do not assert
		that every Morita equivalence behaves so canonically, only that the filtration
		existence and graded pieces depend on $A$ solely through its Brauer class.

	\begin{proof}
		Completeness, exhaustiveness, and the graded-piece formula follow from
		$F^\bullet_{\mathrm{BMS}}$ \cite{bms} together with parts (1)-(2) of
		Theorem \ref{thmazumaya}, as just described. Brauer invariance follows since a Morita
		equivalence $\mathrm{Perf}(A)\simeq\mathrm{Perf}(A')$ of $Z$-linear
		categories induces an equivalence on $THH$ commuting with the local trace
		equivalences, by Lemma \ref{lemmorita-pic}, up to the automorphism ambiguity of Lemma
		\ref{leminvtrivial}.
	\end{proof}

\subsection{Ingredients}

	We begin with a K\"unneth formula over the center. 
Let $Z$ be a commutative ring and let $A$ and $B$ be associative
		$Z$-algebras (i.e., each equipped with a central map from $Z$). 
		Here and below, $\otimes_Z$ denotes the derived tensor product. Denote 
		$\mathrm{Mod}_{THH(Z)}$  the ($\infty$-)category of $THH(Z)$-modules obtained by
		inverting Borel equivalences (equivalences detected on underlying homotopy groups) on
		modules over the commutative ring orthogonal spectrum $THH(Z)$. Regard $THH(A)$,
		$THH(B)$, $THH(A\otimes_ZB)$ as objects of $\mathrm{Mod}_{THH(Z)}$ via the algebra maps
		$Z\to A$, $Z\to B$, $Z\to A\otimes_ZB$.

	\begin{lemma}
		\label{lemkunneth}
		The natural map
		\begin{equation}
			THH(A)\otimes_{THH(Z)}THH(B)\longrightarrow THH(A\otimes_ZB)
		\end{equation}
		is an equivalence in $\mathrm{Mod}_{THH(Z)}$, compatibly with the $S^1$-action and the
		cyclotomic Frobenius $\varphi_p$ on each side, hence an equivalence of cyclotomic
		spectra once the $THH(Z)$-module structure is forgotten.
	\end{lemma}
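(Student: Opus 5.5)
The approach rests on the symmetric monoidality of $THH$ as a functor from $\mathbb{E}_1$-algebras (or small stable $\infty$-categories) to cyclotomic spectra. First, I would recall that $THH:\mathrm{Alg}_{\mathbb{E}_1}(\mathrm{Sp})\to\mathrm{CycSp}$ is symmetric monoidal, so $THH(A\otimes_{\mathbb S}B)\simeq THH(A)\otimes THH(B)$ as cyclotomic spectra. This is classical, via the cyclic bar construction, and is recorded in Nikolaus--Scholze \cite{nik} and Blumberg--Gepner--Tabuada \cite{bgt}.

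Second, I would express the relative tensor product $A\otimes_ZB$ as a geometric realization. Concretely, $A\otimes_ZB\simeq|A\otimes Z^{\otimes\bullet}\otimes B|$ is a two-sided bar construction over $\mathbb S$. Here $Z$ is commutative and acts centrally on both $A$ and $B$, so this is a simplicial diagram of $\mathbb{E}_1$-rings: the centrality of $Z\to A$ and $Z\to B$ is exactly what makes the face maps algebra maps. Equivalently, and more cleanly, $A$ and $B$ are $\mathbb{E}_1$-algebras in $\mathrm{Mod}_Z$, and $\mathrm{Perf}(A)\otimes_{\mathrm{Perf}(Z)}\mathrm{Perf}(B)\simeq\mathrm{Perf}(A\otimes_ZB)$ in $\mathrm{Cat}^{\mathrm{perf}}_{\mathrm{Perf}(Z)}$.

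Third, I would apply $THH$ and commute it with the geometric realization. $THH$ is computed by the cyclic bar construction, which commutes with sifted colimits of $\mathbb{E}_1$-rings. Hence
\[
THH(A\otimes_ZB)\simeq|THH(A)\otimes THH(Z)^{\otimes\bullet}\otimes THH(B)|\simeq THH(A)\otimes_{THH(Z)}THH(B),
\]
and the right-hand side is the bar construction computing the relative tensor product in $\mathrm{Mod}_{THH(Z)}$. This identification is the natural map of the statement, since the natural map is induced by the lax monoidal structure together with the maps $Z\to A,B$.

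Finally, I would check compatibility with the extra structure. The forgetful functor $\mathrm{CycSp}\to\mathrm{Sp}$ preserves colimits and the symmetric monoidal structure, and $\mathrm{CycSp}$ is presentably symmetric monoidal. So the geometric realization and relative tensor product above can be formed in $\mathrm{CycSp}$, and are computed underlying. The equivalence is therefore $S^1$-equivariant and $\varphi_p$-compatible, being built entirely from cyclotomic maps.

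The main obstacle is this last point about $\varphi_p$. The Tate construction $(-)^{tC_p}$ is only lax monoidal and does not commute with geometric realizations in general. We therefore need the relative tensor product to be formed in $\mathrm{CycSp}$ (where $\varphi_p$ of a colimit is defined as the induced map into $(\mathrm{colim})^{tC_p}$), rather than asking $\varphi_p$ to be computed termwise. I would handle this by appealing to the Nikolaus--Scholze description of $\mathrm{CycSp}$ as a lax equalizer: colimits there are computed underlying, and the structure map of the colimit is the composite through the canonical comparison $\mathrm{colim}\,X^{tC_p}\to(\mathrm{colim}\,X)^{tC_p}$. Since $THH$ of the realization is already identified underlying, the cyclotomic structures then agree by construction.

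For bounded-below terms (connective $Z$, $A$, $B$), the Tate orbit lemma gives equivalences with the genuine picture, but this is not needed for the statement as formulated.
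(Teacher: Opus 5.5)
Your proposal is correct and follows essentially the same route as the paper: the two-sided bar construction $B_\bullet(A,Z,B)$ as a simplicial $\mathbb{E}_1$-algebra, symmetric monoidality of $THH$, and preservation of sifted colimits together identify $THH(A\otimes_ZB)$ with the bar construction computing $THH(A)\otimes_{THH(Z)}THH(B)$. The one difference is in the $S^1$/$\varphi_p$ compatibility, which the paper delegates to Blumberg--Mandell \cite{bm}; you instead obtain it by forming the bar construction inside $\mathrm{CycSp}$, where colimits and tensor products are computed underlying and the forgetful functor is conservative, which is the same argument the paper gives in Part 2 of Lemma \ref{lemkunnethhigher}.
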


	\begin{proof}
		This is the relative K\"unneth theorem for $THH$ of Blumberg-Mandell
		\cite{bm}, stated there in the category
		$\mathrm{Mod}_{THH(Z)}$ just described. 
Blumberg-Mandell note that, despite
		being ``well known,'' this statement did not previously have a complete proof in the
		literature, and supply one. We rely on their theorem for the  statement
		and do not prove it again. 
What follows is schematic motivation only, not a substitute
		proof. In particular, it does not establish the cyclotomic compatibility,
		which is part of Blumberg-Mandell  statement, not derived here from first
		principles. The equivalence is constructed from
		the two-sided bar construction $B_\bullet(A,Z,B)$, with $n$-simplices
		$A\otimes_{\mathbb S}Z^{\otimes_{\mathbb S}n}\otimes_{\mathbb S}B$, a
		simplicial $\mathbb E_1$-algebra in orthogonal spectra. Each term is a tensor product of algebras.
		The face maps multiply $Z$ into $A$ and into $B$ through the respective
		central maps. Hence these are algebra maps in the category of associative orthogonal-spectrum
		algebras. Note that only the centrality of $Z\to A$ and
		$Z\to B$ is used here (neither $A$ nor $B$ need to be commutative). 
 Its geometric realization,
		formed in this category of algebras,
		is $A\otimes_ZB$ as an $\mathbb E_1$-algebra. 
$THH$, applied at each levele, takes this
		simplicial algebra to a simplicial object in $\mathrm{Mod}_{THH(Z)}$ whose geometric
		realization is $THH(A\otimes_ZB)$, since $THH$ preserves sifted colimits of algebras
	\cite{bm}. This identifies $THH(A\otimes_ZB)$ with the bar resolution computing
		$THH(A)\otimes_{THH(Z)}THH(B)$,

		\begin{eqnarray*}
			THH(A\otimes_ZB)\simeq\bigl|THH(A)\otimes_{\mathbb S}
			THH(Z)^{\otimes_{\mathbb S}\bullet}\otimes_{\mathbb S}THH(B)\bigr|
\\			
\simeq THH(A)\otimes_{THH(Z)}THH(B).
		\end{eqnarray*}

		It is not claimed that
 each individual simplicial structure map in this diagram is
		 an equivalence (it is not, in general) only that the colimit of the whole
		diagram computes the stated object, which is the content of the cited theorem. 
	\end{proof}

We continue with $\infty$-categorical K\"unneth lemma, which is used then in Theorem \ref{thmderived}.  
	\begin{lemma}
		\label{lemkunnethhigher}
		Let $Z$ be a connective $\mathbb{E}_\infty$-ring and let $A,B$ be $\mathbb E_1$-algebras
		over $Z$ in the $\infty$-category of spectra, with $\otimes_Z$ the derived tensor
		product. Then the conclusion of Lemma \ref{lemkunneth} holds in this
		generality. The natural map
		\begin{equation}
			THH(A)\otimes_{THH(Z)}THH(B)\longrightarrow THH(A\otimes_ZB)
		\end{equation}
		is an equivalence of $THH(Z)$-modules, compatibly with the cyclotomic structure.
\end{lemma}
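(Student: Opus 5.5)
The plan is to deduce this from the symmetric monoidality of $THH$ as a functor of $\infty$-categories, rather than from a point-set bar construction. I would use three established facts. First, $THH:\mathrm{Alg}_{\mathbb E_1}(\mathrm{Sp})\to\mathrm{CycSp}$ is symmetric monoidal for $\otimes_{\mathbb S}$, in the Nikolaus-Scholze model \cite{nik}, with the Frobenius $\varphi_p$ lax-monoidal-compatible via the Tate diagonal. Second, $THH$ preserves sifted colimits of $\mathbb E_1$-algebras; this holds because $THH(R)$ is the geometric realization of the cyclic bar construction $R^{\otimes_{\mathbb S}(\bullet+1)}$, and $\otimes_{\mathbb S}$ commutes with sifted colimits in each variable. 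Third, $THH(Z)$ is an $\mathbb E_\infty$-algebra in $\mathrm{CycSp}$. From the first fact, $THH(A)$ and $THH(B)$ are $THH(Z)$-modules in cyclotomic spectra, via the central $\mathbb E_1$-maps $Z\otimes_{\mathbb S}A\to A$.

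The key steps are as follows. (i) Write $A\otimes_ZB$ as the geometric realization of the two-sided bar construction $B_\bullet(A,Z,B)=A\otimes_{\mathbb S}Z^{\otimes_{\mathbb S}\bullet}\otimes_{\mathbb S}B$. The face maps are $\mathbb E_1$-algebra maps precisely because $A$ and $B$ are $\mathbb E_1$-$Z$-algebras, i.e. $\mathbb E_1$-algebras in $\mathrm{Mod}_Z$. In $\infty$-categorical language, this is the statement that the relative tensor product in $\mathrm{Alg}_{\mathbb E_1}(\mathrm{Mod}_Z)$ is computed by the bar construction in $\mathrm{Alg}_{\mathbb E_1}(\mathrm{Sp})$. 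The realization may be formed in algebras because the forgetful functor $\mathrm{Alg}_{\mathbb E_1}\to\mathrm{Sp}$ preserves sifted colimits. (ii) Apply $THH$ levelwise. Sifted-colimit preservation and symmetric monoidality give
\[
THH(A\otimes_ZB)\simeq\bigl|THH(A)\otimes_{\mathbb S}THH(Z)^{\otimes_{\mathbb S}\bullet}\otimes_{\mathbb S}THH(B)\bigr|,
\]
and the right side is the bar construction computing $THH(A)\otimes_{THH(Z)}THH(B)$ in $\mathrm{Mod}_{THH(Z)}(\mathrm{CycSp})$. (iii) Identify this equivalence with the natural map of the statement. Both the natural map and the bar equivalence are induced by the same lax-monoidal structure maps, so this reduces to a check on the zeroth simplicial level.

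The main obstacle is the cyclotomic compatibility in step (ii). The relative tensor product must be formed in $\mathrm{CycSp}$, and the forgetful functor $\mathrm{CycSp}\to\mathrm{Sp}^{BS^1}$ preserves colimits only because $\mathrm{CycSp}$ is a lax equalizer of colimit-preserving functors. This requires that $(-)^{tC_p}$ commute with the geometric realization in question, and $(-)^{tC_p}$ does not commute with arbitrary colimits. I would handle this using connectivity. Since $Z$ is connective, I would first reduce to connective $A$ and $B$, where all simplicial terms are bounded below uniformly. On bounded-below spectra, $(-)^{tC_p}$ commutes with geometric realizations after $p$-completion, by the uniform connectivity estimates in \cite{nik}, and the Frobenius is then defined levelwise. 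For nonconnective $A$ and $B$ I would fall back on \cite{bm}, whose statement is made at this generality, and cite it for the cyclotomic part rather than reprove it. The $THH(Z)$-module statement itself needs no hypothesis beyond (i)-(iii).
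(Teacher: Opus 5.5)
Your steps (i)--(iii) are the paper's Part 1: the bar construction $B_\bullet(A,Z,B)$, sifted-colimit preservation of $THH$, and identification of the realization with the bar construction for $THH(A)\otimes_{THH(Z)}THH(B)$. They are fine.

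\textbf{The gap is in your final paragraph on cyclotomic compatibility.} The obstacle you name does not exist, and the tool you propose to overcome it is false.

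$\mathrm{CycSp}$ is the lax equalizer of $\mathrm{id}$ and $(-)^{tC_p}$ on $\mathrm{Sp}^{BS^1}$, so the Frobenius is a map \emph{out of} the identity functor. Given a diagram $(X_i,\varphi_i)$, the colimit $X=\operatorname{colim}X_i$ acquires a Frobenius via $X\simeq\operatorname{colim}X_i\to\operatorname{colim}X_i^{tC_p}\to X^{tC_p}$. This uses only the canonical comparison map, never its invertibility. Hence the forgetful functor $\mathrm{CycSp}\to\mathrm{Sp}^{BS^1}$ preserves all colimits with no condition on $(-)^{tC_p}$ (only the source functor $\mathrm{id}$ must preserve them), and it is conservative. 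Consequently:
\begin{itemize}
\item the bar construction in $\mathrm{Mod}_{THH(Z)}(\mathrm{CycSp})$ has the expected underlying spectrum;
\item the natural map lies in $\mathrm{CycSp}$ by functoriality;
\item it is an equivalence there because it is one on underlying spectra.
\end{itemize}
This is precisely the paper's Part 2 (the map is already cyclotomic, and equivalences are detected underlying). It needs neither connectivity nor \cite{bm}.

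\textbf{Your proposed tool is false.} $(-)^{tC_p}$ does not commute with geometric realizations of uniformly bounded-below spectra, even after $p$-completion. Take $Y=\bigoplus_{k\ge0}\Sigma^kH\mathbb{F}_p$ with trivial $C_p$-action. It is the realization of $HV_\bullet$, where $V_\bullet$ is the Dold--Kan simplicial $\mathbb{F}_p$-vector space of the complex $\bigoplus_k\mathbb{F}_p[k]$; every level is discrete and finite-dimensional.
\begin{itemize}
\item Applying $(-)^{tC_p}$ levelwise and then realizing gives $\bigoplus_k\Sigma^kH\mathbb{F}_p^{tC_p}$, with $\pi_0=\bigoplus_{k\ge0}\mathbb{F}_p$.
\item But $Y\simeq\prod_k\Sigma^kH\mathbb{F}_p$, since each degree has a single summand. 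Hence $Y^{tC_p}\simeq\prod_k\Sigma^kH\mathbb{F}_p^{tC_p}$, with $\pi_0=\prod_{k\ge0}\mathbb{F}_p$.
\end{itemize}
All of these spectra are already $p$-complete. Uniform estimates of the kind you invoke control $(-)^{tC_p}$ on limits of uniformly bounded-below diagrams, not on colimits.

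\textbf{Two further problems.} First, nothing lets you reduce arbitrary $\mathbb{E}_1$-$Z$-algebras $A,B$ to connective ones. Second, the fallback to \cite{bm} does not cover a general connective $\mathbb{E}_\infty$-ring $Z$: as the paper notes, that result is stated for $Z$ an ordinary commutative ring.

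As written, your cyclotomic step fails. Replacing that paragraph with the colimit-preservation and conservativity observation above repairs it and recovers the paper's proof.
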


		We state this as a separate lemma 
 since, unlike Lemma
		\ref{lemkunneth}. It is not a citation of Blumberg-Mandell 
		\cite{bm}, which is stated only for $Z$ an ordinary commutative ring and
		$A$, $B$ algebras in orthogonal spectra.  
Theorem \ref{thmderived} below relies on
		 this lemma, for its higher-algebraic input.

	\begin{proof}
		We list which facts are used and why each holds, and avoid deriving the
		cyclotomic upgrade from any claim that a Tate construction commutes with a
		colimit which is not true in general. 

		\smallskip
		\noindent\emph{Part 1, underlying spectra.} Form the two-sided bar construction
		$B_\bullet(A,Z,B)$ in the $\infty$-category of $\mathbb E_1$-algebras in spectra
		(over the sphere $\mathbb S$, exactly as in the proof of Lemma \ref{lemkunneth}).
		Its $n$-simplices are 
$A\otimes_{\mathbb S}Z^{\otimes_{\mathbb S}n}\otimes_{\mathbb S}B$
 (the internal tensor powers of $Z$ are taken over $\mathbb S$, not 
		over $Z$  
(if it were over $Z$, $Z^{\otimes_Zn}\simeq Z$ for every $n$ and the
		construction would trivialize)) 
 with face and degeneracy maps induced by the
		multiplication, and  
 unit of $Z$ acting on the two central factors via the given
		central maps $Z\to A$, $Z\to B$. 
Its geometric realization (a colimit indexed by
		$\Delta^{\mathrm{op}}$) is $A\otimes_ZB$, by the standard identification of a
		relative tensor product of algebras with the realization of its bar construction \cite{ha}.

		Apply $THH(-):\mathbb E_1\text{-}\mathrm{Alg}\to\mathrm{Sp}$, for the moment only as
		a functor to underlying spectra at each level.
  We use two separate facts: 
		{\rm(i)} $\Delta^{\mathrm{op}}$ is sifted \cite{htt},
		i.e., its diagonal $\Delta^{\mathrm{op}}\to\Delta^{\mathrm{op}}\times\Delta^{\mathrm{op}}$
		is cofinal, unrelated to filteredness; 
{\rm(ii)} the forgetful functor from
		$\mathbb E_1$-algebras in spectra to their underlying spectra creates sifted
		colimits \cite{ha}, since the free-algebra monad is built
		from tensor powers $(-)^{\otimes_{\mathbb S}n}$. 
For a sifted diagram 
$\{X_i\}$, cofinality of the diagonal gives 
$\operatorname{colim}_i(X_i\otimes_{\mathbb S}X_i)
\simeq(\operatorname{colim}_iX_i)\otimes_{\mathbb S}(\operatorname{colim}_iX_i)$. 
		Since $N^{\mathrm{cy}}_\bullet(-)$ (which computes $THH$)
 is an iterated tensor power of
		the algebra over $\mathbb S$ in each cyclic simplicial degree \cite{nik},
 $THH$ as a
		functor of the algebra preserves sifted colimits of underlying spectra by
		{\rm(ii)}. 
Combined with {\rm(i)}, $THH$ carries the realization of
		$B_\bullet(A,Z,B)$ (which is $A\otimes_ZB$)  to the realization,  
as underlying
		spectra, of $THH(B_\bullet(A,Z,B))$. On the other hand
		$THH(B_n(A,Z,B))=THH(A\otimes_{\mathbb S}Z^{\otimes_{\mathbb S}n}\otimes_{\mathbb
		S}B)$. The same bar-resolution identification as in Lemma \ref{lemkunneth}
		exhibits $|THH(B_\bullet(A,Z,B))|$, as underlying spectra, as
		$THH(A)\otimes_{THH(Z)}THH(B)$. Combining the two computations of the same
		realization gives a specific comparison map, and shows it is an equivalence of
		underlying spectra:
		\begin{equation}
			\alpha:\ THH(A)\otimes_{THH(Z)}THH(B)\ \xrightarrow{\ \sim\ }\ THH(A\otimes_ZB)
			\quad\text{(as underlying spectra).}
		\end{equation}

		\smallskip
		\noindent\emph{Part 2, upgrading to cyclotomic spectra.}  
We do not try to show any Tate construction commutes with the colimit above. Instead, we observe that $\alpha$ is 
		already a well-defined map of cyclotomic spectra (equivalently, of
		$THH(Z)$-modules in $\mathrm{CycSp}$). Both sides are built functorially from
		$THH(-)$ applied to objects and structure maps of the bar construction. 
		$THH(-)$ belongs to $\mathrm{CycSp}$ by construction \cite{nik}. 
Thus,  every map entering
		the construction of $\alpha$ (face and degeneracy maps of the bar-simplicial
		object, the comparison map) is already a map of cyclotomic spectra. No 
		separate cyclotomic structure needs to be built on $\alpha$ after the fact. It
		remains only to check $\alpha$, now regarded as a map in $\mathrm{CycSp}$, is an
		equivalence there. For this we use that the forgetful functor
		$U:\mathrm{CycSp}\to\mathrm{Sp}$ is conservative (reflects equivalences)
		\cite{cmnn}. A map of cyclotomic spectra is an equivalence if and only if its
		underlying map of spectra is. This is not a deep fact requiring colimit-preservation
		of any Tate construction. It holds because the cyclotomic Frobenius is
		compatibility data on a map of underlying spectra (a path in a
		mapping space)  
instead of   additional data that could independently fail to be
		invertible. If $U(\alpha)$ is an equivalence, its inverse and the 
		homotopies transport the Frobenius-compatibility data along it automatically. Since
		$U(\alpha)$ is exactly the equivalence of underlying spectra established in Part 1,
		$\alpha$ is an equivalence of cyclotomic spectra. This is the 
 direct argument at the level of cyclotomic spectra. It bypasses the question of 
		whether $(-)^{tC_p}$ commutes with the colimit entirely. 
It is needed only that 
		equivalences in $\mathrm{CycSp}$ are detected on underlying spectra.
	\end{proof}

	\begin{remark}
		\label{remkunnethhigher}
	We are not aware of Lemma \ref{lemkunnethhigher}  
		 being stated in this $\infty$-categorical form 
		elsewhere in the literature.
	\end{remark}

	\begin{lemma}
		\label{lemetale}
		Let $Z\to Z'$ be an \'etale map of commutative rings and $A$ an associative
		$Z$-algebra. Then the natural maps
		\begin{eqnarray}
			THH(Z)\otimes_ZZ'&\longrightarrow& THH(Z'),
\\
\nonumber
			THH(A)\otimes_ZZ'&\longrightarrow& THH(A\otimes_ZZ')
		\end{eqnarray}
		are equivalences.
	\end{lemma}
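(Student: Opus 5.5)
The second equivalence will follow formally from the first together with the relative K\"unneth formula, so the plan is to prove the statement for $Z\to Z'$ and then bootstrap. Throughout, $THH(Z)$ is regarded as a $Z$-algebra via the $\mathbb E_\infty$-map $Z\to THH(Z)$ (inclusion of $0$-simplices of the cyclic bar construction). This is what makes $THH(Z)\otimes_ZZ'$ meaningful. The comparison map is the $THH(Z)$-linear extension of $Z'\to THH(Z')$.

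\emph{Step 1: reduction to Hochschild homology over $\mathbb Z$.} Both $THH(Z)\otimes_ZZ'$ and $THH(Z')$ are connective, since $Z'$ is flat over $Z$. Hence the cofiber $C$ of the comparison map is a connective $THH(\mathbb Z)$-module. The augmentation $THH(\mathbb Z)\to\mathbb Z$ is an isomorphism on $\pi_0$, so for connective $THH(\mathbb Z)$-modules the functor $-\otimes_{THH(\mathbb Z)}\mathbb Z$ detects vanishing: the lowest nonzero homotopy group survives, by a Nakayama/Hurewicz argument. It therefore suffices to show that $C\otimes_{THH(\mathbb Z)}\mathbb Z\simeq0$. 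By the standard identification $THH(R)\otimes_{THH(\mathbb Z)}\mathbb Z\simeq HH(R/\mathbb Z)$, this reduces the claim to the classical statement that
\[
HH(Z/\mathbb Z)\otimes_ZZ'\longrightarrow HH(Z'/\mathbb Z)
\]
is an equivalence for $Z\to Z'$ \'etale.

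\emph{Step 2: the classical \'etale base change for $HH$.} Here I would argue via the HKR (Postnikov/cotangent) filtration on derived Hochschild homology, with graded pieces $\wedge^iL_{R/\mathbb Z}[i]$. \'Etaleness gives $L_{Z'/Z}\simeq0$, so by the transitivity triangle $L_{Z/\mathbb Z}\otimes_ZZ'\simeq L_{Z'/\mathbb Z}$. The comparison map is then an equivalence on all graded pieces. The filtration is complete and exhaustive, and the graded pieces are uniformly connective, so the map on $HH$ is an equivalence. Alternatively, one can cite Weibel--Geller directly, or use $Z'\otimes_ZZ'\cong Z'\times W$ to see $HH(Z'/Z)\simeq Z'$ and combine it with the base-change identity $HH(Z'/\mathbb Z)\otimes_{HH(Z/\mathbb Z)}Z\simeq HH(Z'/Z)$.

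\emph{Step 3: the algebra $A$.} Apply Lemma \ref{lemkunneth} with $B=Z'$, which is legitimate since $Z\to Z'$ is central. Using associativity of relative tensor products and Step 1,
\[
THH(A)\otimes_ZZ'\simeq THH(A)\otimes_{THH(Z)}\bigl(THH(Z)\otimes_ZZ'\bigr)\simeq THH(A)\otimes_{THH(Z)}THH(Z')\simeq THH(A\otimes_ZZ').
\]
One checks that this composite is the natural map.

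\emph{Main obstacle.} I expect the main difficulty to be Step 1. One must make the $Z$-module structure on $THH(Z)$ and the conservativity of $-\otimes_{THH(\mathbb Z)}\mathbb Z$ on connective modules precise, and identify the resulting map with the one on $HH$. If this becomes awkward, my fallback is to work levelwise in the cyclic bar construction. There one uses that $Z'^{\otimes_{\mathbb S}(n+1)}$ is \'etale over $Z^{\otimes_{\mathbb S}(n+1)}$ on $\pi_0$, and that \'etale extensions of connective $\mathbb E_\infty$-rings lift uniquely. The goal would then be to show that the cyclic object for $Z'$ is the base change of the one for $Z$ along the $0$-simplex inclusion; this is the form of the argument in McCarthy--Minasian and Mathew.
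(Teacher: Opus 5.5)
Your proof is correct, and your Step 3 is exactly the paper's argument. The paper likewise obtains the second equivalence from the first via Lemma \ref{lemkunneth} with $B=Z'$, through the chain $THH(A\otimes_ZZ')\simeq THH(A)\otimes_{THH(Z)}THH(Z')\simeq THH(A)\otimes_{THH(Z)}THH(Z)\otimes_ZZ'\simeq THH(A)\otimes_ZZ'$.

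The difference is in the first equivalence. The paper does not prove it; it simply cites \'etale base change (Weibel--Geller for $HH$, and a separate reference for $THH$). You instead supply an argument: a Nakayama reduction along the $\pi_0$-isomorphism $THH(\mathbb Z)\to\mathbb Z$ to derived $HH(-/\mathbb Z)$, followed by the HKR filtration and $L_{Z'/Z}\simeq0$. This buys three things. It makes the lemma self-contained. It makes explicit the $Z$-module structure on $THH(Z)$ that the paper leaves implicit. And your reduction lands in derived (Shukla) $HH(-/\mathbb Z)$ of an arbitrary commutative ring, which is exactly what the cotangent-complex argument handles.

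Two small points to tighten:
\begin{enumerate}
\item What gives completeness is that $F^i$, equivalently $\mathrm{gr}^i=\wedge^iL[i]$, is $i$-connective; connectivity of the graded pieces alone would not suffice.
\item On the source you should note that $F^i\otimes_ZZ'$ stays $i$-connective because $Z'$ is flat over $Z$.
\end{enumerate}

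Finally, your third alternative in Step 2 can be run directly on $THH$, which shortens everything. Base-change the comparison map along the $\pi_0$-isomorphism $THH(Z)\to Z$. This gives $Z'\to THH(Z')\otimes_{THH(Z)}Z\simeq HH(Z'/Z)\simeq Z'$, which is an equivalence because $Z'\otimes_ZZ'\cong Z'\times W$. The same Nakayama argument over $THH(Z)$ then concludes, without passing through $\mathbb Z$ or the HKR filtration.
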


	\begin{proof}
		The first map is an equivalence by \'etale base change for $THH$. For
		Hochschild homology this is Weibel-Geller \cite{wg}, and for $THH$ see
		\cite{kn}.
		The second follows by combining the first with Lemma \ref{lemkunneth}
		\begin{align*}
			THH(A\otimes_ZZ')\simeq THH(A)\otimes_{THH(Z)}THH(Z')
			\simeq THH(A)\otimes_{THH(Z)}THH(Z)\otimes_ZZ'
\\
			\simeq THH(A)\otimes_ZZ'.
		\end{align*}
	\end{proof}

	Next we mention the Morita trace and the triviality of the Picard action.
	\begin{lemma}
		\label{lemmorita-pic}
		Let $C$ be a commutative ring.

		\smallskip
		\noindent{\rm(i)} For $P$ a finite projective faithful $C$-module, the
		$(\mathrm{End}_C(P),C)$-bimodule $P$ induces an equivalence
		$\mathrm{Perf}(\mathrm{End}_C(P))\simeq\mathrm{Perf}(C)$ and hence an
		equivalence of cyclotomic spectra
		\begin{equation}
			\tau_P:THH(\mathrm{End}_C(P))\xrightarrow{\ \sim\ }THH(C),
		\end{equation}
		which on $\pi_0$ is induced by the trace
		$\mathrm{End}_C(P)/[\,\cdot\,,\cdot\,]\xrightarrow{\sim}C$.

		\smallskip
		\noindent{\rm(ii)} For $L$ an invertible $C$-module, conjugation by the
		$C$-linear autoequivalence $-\otimes_CL$ of $\mathrm{Perf}(C)$
 acts on $THH(C)$ by
		an automorphism which is homotopic to the identity via an explicitly
		constructed homotopy. 
\end{lemma}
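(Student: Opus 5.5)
For (i), I would begin with classical Morita theory. Because $P$ is finite projective and faithful over $C$, it is a progenerator: it is a summand of some $C^n$, and faithfulness makes $C$ a summand of some $P^m$, since the trace ideal of $P$ is all of $C$. Set $E=\mathrm{End}_C(P)$. The bimodule ${}_EP_C$ then gives inverse equivalences $-\otimes_EP$ and $-\otimes_C P^\vee$ with $P^\vee=\mathrm{Hom}_C(P,C)$, first on module categories and then on $\mathrm{Perf}$, because exact functors preserving compact generators restrict to perfect complexes.

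Next I would apply $THH$ regarded as an invariant of stable $\infty$-categories, $THH(E)=THH(\mathrm{Perf}\,E)$, following \cite{bgt, nik}. By construction it sends exact equivalences to equivalences of cyclotomic spectra, and this produces $\tau_P$. To identify the map on $\pi_0$, I would use the Dennis-trace description: $\pi_0THH(\mathrm{Perf}\,R)=HH_0(R)$ is generated by the classes of endomorphisms $f\in\mathrm{End}(M)$ of perfect modules, modulo $[fg]=[gf]$. Under $\tau_P$, an element $f\in E$ acting on $E$ is sent to the endomorphism $f$ of $E\otimes_EP=P$. Its class in $HH_0(C)$ is the Hattori-Stallings trace, and after writing $P\oplus Q=C^n$ this is the usual matrix trace of $f\oplus0$. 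So $\pi_0\tau_P$ is the trace $E/[E,E]\to C$.

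For (ii), note that $-\otimes_CL$ is $C$-linear, and $\mathrm{Perf}(C)$ is a symmetric monoidal category whose $THH$ is computed from the unit object. Write $F=-\otimes_CL$. The induced map $THH(F)$ agrees with the composite $THH(\mathrm{Perf}\,C)\to THH(\mathrm{End}_C(L))\simeq THH(C)$, since $F$ sends the generator $C$ to $L$ and $\mathrm{End}_C(L)=C$ canonically for invertible $L$. I would make this concrete by viewing $THH$ of $\mathrm{Perf}(C)$ as the cyclic nerve with many objects. $F$ induces a map of cyclic nerves $N^{\mathrm{cy}}(\mathrm{Perf}\,C)\to N^{\mathrm{cy}}(\mathrm{Perf}\,C)$. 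On the full subcategory consisting of $C$ and $L$, the identifications $\mathrm{Hom}(C,L)=L$ and $\mathrm{Hom}(L,C)=L^{-1}$ together with $L\otimes L^{-1}\cong C$ yield a natural isomorphism between $F$ and the identity on the $\mathrm{End}$-algebras. More precisely, the inclusions $\{C\}\hookrightarrow\{C,L\}\hookleftarrow\{L\}$ both induce equivalences on $THH$ by Morita invariance (agreement). $F$ restricted to $\{C\}$ is the inclusion of $\{L\}$, and the two inclusions $\{C\},\{L\}\to\{C,L\}$ are identified on $THH$ by the standard cyclic-bar homotopy attached to the invertible bimodule $L$. This is the explicit homotopy the statement asks for. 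An alternative, conceptual argument: $THH(C)$ acts on itself, and conjugation by the object $L$ of $\mathrm{Pic}$ acts through the dimension/trace of $L$, which is $1$ because $L$ is rank one.

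I expect the main obstacle to be making the homotopy in (ii) explicit and coherent, so that it respects the cyclotomic structure rather than only the underlying spectrum. My first attempt would be the two-object cyclic nerve, checking $S^1$-equivariance directly and deducing Frobenius compatibility from functoriality of $THH$ on $\mathrm{Cat}^{\mathrm{perf}}$.
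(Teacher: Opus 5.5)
\textbf{Part (i).} This is the paper's argument: Morita invariance of $THH$ on $\mathrm{Perf}$, refined to cyclotomic spectra. Your Hattori--Stallings/Dennis-trace identification of $\pi_0\tau_P$ is a correct detail that the paper leaves to the citation.

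\textbf{Part (ii).} The argument that actually proves (ii) is the one you list as an ``alternative''. It is exactly the paper's proof. The paper uses $THH(C;\mu^*L)\simeq L\otimes_C THH(C)$ and cyclicity of the trace to identify the induced automorphism with the map induced by $C\xrightarrow{\mathrm{coev}}L\otimes_C L^\vee\xrightarrow{\mathrm{ev}}C$, i.e.\ the categorical dimension of $L$. It then checks that this element of $C$ is $1$ locally where $L\cong C$, hence globally.

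Your primary two-object route has a gap at precisely the step that carries the content. Reducing to a comparison of $THH(i_C)$ and $THH(i_L)$ as maps $THH(C)\to THH(\{C,L\})$ is fine. But the assertion that these two inclusions are ``identified by the standard cyclic-bar homotopy attached to the invertible bimodule $L$'' is the whole claim, not an available input. Morita invariance (agreement) only tells you each inclusion is an equivalence.

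This cannot be formal. Run the identical argument for $K$-theory, which is equally Morita invariant, and you would get $K(-\otimes_C L)\simeq\mathrm{id}$. That is false already on $K_0$: $[C]\mapsto[L]$, and $[L]\neq[C]$ whenever $L\not\cong C$, as $\det\colon K_0(C)\to\mathrm{Pic}(C)$ shows. For the same reason, your sentence about a ``natural isomorphism between $F$ and the identity on the End-algebras'' produces no homotopy. There is no natural isomorphism $F\Rightarrow\mathrm{id}$ unless $L\cong C$.

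To make the cyclic-bar route work, replace the pair $(\eta,\eta^{-1})$ of a natural isomorphism by $\mathrm{coev}(1)=\sum_k m_k\otimes n_k$, with $m_k\in\mathrm{Hom}(C,L)$ and $n_k\in\mathrm{Hom}(L,C)$. Use $C$-linearity for termwise naturality. Such a homotopy closes up only because
\[
\sum_k m_k\circ n_k=\mathrm{id}_L \quad\text{and}\quad \sum_k n_k(m_k)=\mathrm{ev}(\mathrm{coev}(1))=\operatorname{rk}L=1 .
\]
The second identity is the rank-one/dimension input, and it is exactly what separates $THH$ from $K$. Over $\mathbb S$, a homotopy built from a sum of elements is also not directly a simplicial homotopy of cyclic nerves, so the linearity/dimension argument is the cleaner path.

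Finally, the cyclotomic coherence you flag as the main obstacle is not required. Statement (ii) and the paper's proof concern only the underlying automorphism of $THH(C)$. Its later uses, such as comparing $\tau_P$ and $\tau_Q$ and the independence in Lemma~\ref{lempi0azumaya}, need no more than that.
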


Note that the evaluation and coevaluation maps
		$\mathrm{ev}:L\otimes_CL^{-1}\xrightarrow{\sim}C$,
	$\mathrm{coev}:C\xrightarrow{\sim}L^{-1}\otimes_CL$ showing invertibility of $L$ 
		supply, by the standard trace-invariance argument for Hochschild-type invariants of
		a dualizable bimodule autoequivalence 
(the ``zig-zag'' identities for $\mathrm{ev}$
		and $\mathrm{coev}$ applied to the cyclic bar construction), a specific path in the
		mapping space $\mathrm{Map}_{\mathrm{Aut}(THH(C))}(\mathrm{id},-\otimes_CL)$ from
		the identity to the conjugation automorphism.

 We mention only that such a path
		exists and is constructed in this way.
 It is not claimed that such a path is unique,  
		 nor that the space of such paths is contractible. We do not use any
		uniqueness of this path below. In
		particular, if $\mathrm{End}_C(P)\cong\mathrm{End}_C(Q)$ for finite
		projective faithful $C$-modules $P,Q$ (thus, $Q\cong P\otimes_CL$ for some
		invertible $L$), the two trace equivalences $\tau_P,\tau_Q$ agree up to
		this specific homotopy.

Note that for each object, this statement does not supply 
		the higher coherence needed to glue such comparisons across a cosimplicial
		diagram. The proof of Theorem \ref{thmazumaya} below is arranged so that we
 do not need it. 

	\begin{proof}
		Part (i) is Morita invariance of $THH$ as an invariant of stable
		$\infty$-categories, refined to cyclotomic spectra \cite{bgt, nik, hss}.

		For (ii), the functor $-\otimes_CL$ corresponds, under the identification of
		$C$-linear endofunctors of $\mathrm{Perf}(C)$ with $C$-bimodules, to the
		 central bimodule $\mu^*L$, i.e., the pullback of $L$ along the
		multiplication $\mu:C\otimes C^{\mathrm{op}}\to C$. Centrality means that the
		left and right $C$-actions coincide. For central bimodules there is a
		canonical equivalence $THH(C;\mu^*M)\simeq M\otimes_CTHH(C)$, by the change
		of rings
		\begin{equation}
			\mu^*M\otimes^{\mathbb L}_{C\otimes C^{\mathrm{op}}}C
			\;\simeq\;
			M\otimes^{\mathbb L}_C\bigl(C\otimes^{\mathbb L}_{C\otimes
				C^{\mathrm{op}}}C\bigr).
		\end{equation}
		The automorphism of $THH(C)$ induced by conjugation by $-\otimes_CL$ is, by
		the cyclicity of the trace \cite{hss, HNS}, computed by transporting $L$ once
		around the trace and cancelling it against $L^\vee$. Under the displayed
		identification, it is the map induced by the composite
		$C\xrightarrow{\mathrm{coev}}L\otimes_CL^\vee\xrightarrow{\mathrm{ev}}C$. This
		composite computes the categorical dimension of the dualizable module $L$. 
The triangle identities alone only that say this dimension is some automorphism of
		$C$, not that it is the identity.
 What relates it to the identity is that $L$ is a 
		classical (degree-zero) rank-one projective $C$-module. $C$ is an ordinary
		commutative ring  instead of   a ring spectrum where a shift could contribute a
		nontrivial sign. 
 Locally on $\operatorname{Spec}C$ where $L\cong C$, coev and ev 
		become, under this trivialization,  the multiplication maps
		$C\to C\otimes_CC\to C$, whose composite is the identity. 
Since the dimension is
		locally the identity and $\operatorname{Spec}C$ is covered by such opens, it is the
		identity globally. Hence the conjugation action is
		homotopic to the identity. The last claim of (ii) follows since two
		Morita bimodules with the same endomorphism algebra differ by tensoring
		with an invertible module.
	\end{proof}

	Finally we mention the criterion by which an invertible $THH(Z)$-module is
	recognized as trivial. This is what upgrades invertibility, which the trace
	formalism supplies, to an equivalence with the unit.

Let $R$ be a connective $\mathbb{E}_\infty$-ring ($\pi_iR=0$ for $i<0$). Let 
	$M$ be an invertible $R$-module, i.e., dualizable with an $R$-module $M^{-1}$ and an
	equivalence $M\otimes_RM^{-1}\simeq R$ (not only perfect or dualizable without such
	an inverse), such that both $M$ and $M^{-1}$ are connective in the same sense.
Then there are three statements, all classical, which we use in this paper. 

\begin{lemma}
	\label{leminvtrivial}

	\noindent{\rm(i)} Classification of isomorphism classes: $\pi_0M$ is
	an invertible $\pi_0R$-module, and the assignment $M\mapsto\pi_0M$ is a bijection from
	equivalence classes of connective invertible $R$-modules with connective inverse to
	isomorphism classes of invertible $\pi_0R$-modules.

	\smallskip
	\noindent{\rm(ii)} Existence of an equivalence: if $\pi_0M\cong\pi_0R$
	as a $\pi_0R$-module, then $M\simeq R$ as $R$-modules 
(this is the special case of
	{\rm(i)} with the two isomorphism classes identified). 

	\smallskip
	\noindent{\rm(iii)} Non-classification of the space of equivalences:  for
	every fixed isomorphism $\theta:\pi_0M\xrightarrow{\sim}\pi_0R$, the $R$-linear
	equivalences $M\xrightarrow{\sim}R$ inducing $\theta$ on $\pi_0$ form a nonempty, but in
	general non-contractible, space: any two such equivalences $f$, $g$ differ by an 
	automorphism $g^{-1}f\in\mathrm{Aut}_R(M)\cong GL_1(R)$ inducing the identity on 
$\pi_0$,
	i.e., an element of the subgroup $GL_1(R)_1\subset GL_1(R)$ over $1\in\pi_0(R)^\times$.
	Since $\pi_i(GL_1(R))=\pi_i(R)$ for $i\ge 1$,
 this subgroup has the homotopy type of the
	$1$-connected cover of $R$, and is contractible only when $R$ is discrete. 
\end{lemma}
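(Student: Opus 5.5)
The plan is to reduce everything to the discrete ring $\pi_0R$ by base change along $R\to\pi_0R$. The connective Nakayama principle then lifts statements back to $R$.

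The basic tool is this: if $C$ is a connective $R$-module with lowest nonzero homotopy group $\pi_nC$, then
\[
\pi_n(C\otimes_R\pi_0R)\cong\pi_nC\otimes_{\pi_0R}\pi_0R=\pi_nC,
\]
by the bottom row of the Tor spectral sequence. Hence $C\otimes_R\pi_0R\simeq0$ forces $C\simeq0$. The same spectral sequence gives $\pi_0(M\otimes_RN)\cong\pi_0M\otimes_{\pi_0R}\pi_0N$ for connective $M,N$. Applying this to $M\otimes_RM^{-1}\simeq R$ yields $\pi_0M\otimes_{\pi_0R}\pi_0M^{-1}\cong\pi_0R$. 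So $\pi_0M$ is an invertible $\pi_0R$-module, which is the first half of (i).

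For (ii), I first show that $\overline M:=M\otimes_R\pi_0R$ is discrete and equal to $\pi_0M$. The module $\overline M$ is invertible in $D(\pi_0R)$, with inverse $\overline{M^{-1}}$, and both are connective.
\begin{itemize}
\item Invertible objects of the derived category of an ordinary commutative ring are, Zariski-locally on $\operatorname{Spec}\pi_0R$, shifts of line bundles: they are perfect, and at each residue field they become a one-dimensional shifted vector space.
\item Connectivity of $\overline M$ and of $\overline{M^{-1}}$ pins the local shift to $0$, so $\overline M\simeq\pi_0M$ is discrete.
\end{itemize}
This is the step I expect to be the main obstacle. The clean route is to check perfectness (dualizable implies perfect) and then compute fibres at points, where tensor products of connective complexes are controlled by their bottom degrees.

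Granting this, suppose $\theta:\pi_0M\cong\pi_0R$. Choose $f:R\to M$ whose class in $\pi_0\mathrm{Map}_R(R,M)=\pi_0M$ is $\theta^{-1}(1)$. Then $f\otimes_R\pi_0R:\pi_0R\to\overline M=\pi_0M$ is the isomorphism $\theta^{-1}$. The cofibre $C$ of $f$ is connective, since $\pi_0f$ is surjective, and it satisfies $C\otimes_R\pi_0R\simeq0$. Nakayama then gives $C\simeq0$, so $f$ is an equivalence inducing $\theta^{-1}$ on $\pi_0$; this also proves nonemptiness in (iii).

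To finish (i), I need injectivity and surjectivity of $M\mapsto\pi_0M$.
\begin{itemize}
\item Injectivity: apply (ii) to $M\otimes_RN^{-1}$, which is again connective invertible with connective inverse. Its $\pi_0$ is $\pi_0M\otimes\pi_0N^{-1}\cong\pi_0R$, so $M\simeq N$.
\item Surjectivity: lift a rank-one projective $\pi_0R$-module $P$ to a projective $R$-module $\widetilde P$, using the standard equivalence between finitely generated projective $R$-modules and those over $\pi_0R$ (Lurie, Higher Algebra). Do the same for $P^{-1}$. The evaluation map $P\otimes P^{-1}\to\pi_0R$ lifts to a map $\widetilde P\otimes_R\widetilde{P^{-1}}\to R$ whose $\pi_0$ is an isomorphism. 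Since $\widetilde P\otimes_R\widetilde{P^{-1}}$ is projective with $\pi_0$ equal to $\pi_0R$, the Nakayama argument above shows this map is an equivalence.
\end{itemize}

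For (iii), any two $R$-linear equivalences $f,g:M\to R$ inducing $\theta$ differ by $g^{-1}f\in\mathrm{Aut}_R(M)$. Transporting along $f$ identifies $\mathrm{Aut}_R(M)\simeq\mathrm{Aut}_R(R)=GL_1(R)$, the union of the components of $\Omega^\infty R$ lying over $\pi_0(R)^\times$. The condition on $\pi_0$ selects the component over $1$. That component has $\pi_i=\pi_iR$ for $i\ge1$, since the components of an infinite loop space are all homotopy equivalent. It is therefore contractible exactly when all higher $\pi_iR$ vanish, i.e. when $R$ is discrete.
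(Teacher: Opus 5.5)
Your argument is correct, but it takes a different route from the paper's.

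\textbf{The paper's route.} It proves (i) by citing the Antieau--Gepner description of invertible modules over a connective $\mathbb{E}_\infty$-ring: locally on $\operatorname{Spec}\pi_0R$ such a module has the form $\Sigma^nR$ with $n$ locally constant. Connectivity of $M$ and $M^{-1}$ is used only to force $n=0$. The bijection with invertible $\pi_0R$-modules is taken from loc.\ cit., (ii) is read off as a special case, and (iii) is regarded as immediate from the statement.

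\textbf{Your route.} You work globally, by base change along $R\to\pi_0R$, using the Tor spectral sequence and connective Nakayama. The only local input lives over the discrete ring $\pi_0R$ (derived fibres at residue fields), where it shows $M\otimes_R\pi_0R$ is discrete. You then prove (ii) first by lifting a generator. From it you deduce injectivity in (i) by twisting with $N^{-1}$, and surjectivity by Lurie's lifting of finitely generated projective modules. This reverses the paper's order.

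\textbf{Comparison.} The citation is shorter, and it also covers invertible modules whose inverse is not connective, since it records the full pair $(n,\pi_0M)$. Your argument is essentially self-contained and needs no localization of $R$ or gluing. It also isolates exactly where connectivity of $M^{-1}$ is used. Moreover, it actually establishes the nonemptiness claim of (iii) for every prescribed $\theta$, which the paper leaves implicit.

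\textbf{Closing your flagged step.} The one standard fact to cite is that the Tor-amplitude of a perfect complex over a commutative ring can be tested on its derived fibres at residue fields. Your bottom-degree computation over a field then gives amplitude $[0,0]$.

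Your description of $GL_1(R)_1$ as the unit component of $\Omega^\infty R$, with $\pi_i=\pi_iR$ for $i\ge1$, is also the accurate form of the paper's phrase ``$1$-connected cover of $R$''.
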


For $R=THH(Z)$, which generally has nontrivial higher homotopy, the choice of equivalence
	inducing a given $\theta$ is therefore  non-unique. We do not attempt to
	rigidify it further here.

	Item {\rm(i)} is a global statement on $\operatorname{Spec}\pi_0R$, 
 even though its proof proceeds by descent from a local statement below. We use
	{\rm(i)}-{\rm(ii)} alone, but do not use {\rm(iii)},
 to construct $\Phi_A$ 
	 (see the proof of Theorem \ref{thmazumaya}: only the existence of some 
	equivalence inducing the reduced trace is needed there). We use {\rm(iii)} separately
	 to explain why the resulting $\Phi_A$ is not claimed to be
	canonical and, in particular, why its base change need not agree with the local
	Morita trace $\tau_P$ beyond $\pi_0$ (Remark \ref{remscopenarrow} and the proof of
	Theorem \ref{thmazumaya}). 
 Below,
	$\Phi_A$ and the transported filtration are understood in the sense of {\rm(iii)} 
	(well-defined up to equivalence inducing the reduced trace on $\pi_0$, with the ambiguity
	as just described)  and \'etale base change preserves this data. A 
	contractible space of choices is not claimed.

\begin{proof}
	For {\rm(i)}: every invertible module over a connective $\mathbb{E}_\infty$-ring is,
	locally on $\operatorname{Spec}\pi_0R$, a suspension $\Sigma^nR$, with $n$ a locally
	constant integer. This is the connective case of the classification of the Picard
	spectrum of a connective $\mathbb{E}_\infty$-ring. We cite for it the  
	statement of Antieau-Gepner {\rm\cite{ag}} (constructing on Fausk
	computation of $\pi_0\mathrm{Pic}(HR)$ for $R$ discrete),  instead of   the general remarks
	on algebraicity of $\mathrm{Pic}$ in \cite{sag, mathewdescent}. 
 	Antieau-Gepner \cite{ag} is stated for $R$ a connective $\mathbb{E}_\infty$-
	ring. There is no further asumption, i.e., no Noetherian, finiteness, or regularity condition, 
	which is exactly our assumption on $R=THH(Z)$. 
Thus,  no additional connectivity or
	finiteness assumption is made beyond what is stated in the lemma.  Connectivity of $M$ gives $n\ge 0$ and connectivity of
	$M^{-1}$ leads to $n\le 0$, hence $n=0$ identically.
 $M$ is locally equivalent to $R$,  thus 
	$\pi_0M$ is an invertible $\pi_0R$-module.
 The resulting assignment $M\mapsto\pi_0M$ 
($n$ being forced to be $0$, only the second coordinate of the general pair $(n,\pi_0M)$ of
	\cite{ag} survives) is a bijection onto isomorphism classes of invertible
	$\pi_0R$-modules by loc. cit., giving {\rm(i)}. {\rm(ii)} is the case of {\rm(i)}
	where the two isomorphism classes agree, taking the second module to be $R$ 
	($\pi_0R\cong\pi_0R$ via the identity). {\rm(iii)} is proved in the statement above.
\end{proof}

\subsection{Proof of Theorem \ref{thmazumaya}}

Since $A$ is Azumaya over $Z$, the defining isomorphism
$A\otimes_ZA^{\mathrm{op}}\xrightarrow{\ \sim\ }\mathrm{End}_Z(A)$ expresses 
$\mathrm{Perf}(A)$ as an invertible object of the symmetric monoidal category
$\mathrm{Mod}_{\mathrm{Perf}(Z)}(\mathrm{Cat}_\infty^{\mathrm{perf}})$ of $Z$-linear
stable $\infty$-categories, with inverse $\mathrm{Perf}(A^{\mathrm{op}})$
\cite{grothbrauer, milne, toen}. By Lemma \ref{lemkunneth} (with $B=A^{\mathrm{op}}$,
a central $Z$-algebra since $Z$ is central in $A$) and this algebra isomorphism,
\begin{eqnarray}
THH(A)\otimes_{THH(Z)}THH(A^{\mathrm{op}})\;\simeq\;THH(A\otimes_ZA^{\mathrm{op}})
\\
\nonumber
\qquad \;\simeq\;THH(\mathrm{End}_Z(A))\;\xrightarrow[\ \sim\ ]{\ \tau_A\ }\;THH(Z),
\end{eqnarray}
where $\tau_A$ is the Morita trace equivalence of Lemma \ref{lemmorita-pic}\,(i)
with $P=A$. Every arrow is either induced by an isomorphism of algebras or the
bar-resolution equivalence of Lemma \ref{lemkunneth}. Both of which are maps of
\emph{cyclotomic spectra} since $THH$ maps in $\mathrm{CycSp}$ from the beginning 
\cite{nik}. Thus, this composite expresses $THH(A)$ as an invertible 
$THH(Z)$-module in $\mathrm{CycSp}$, with no gluing or choice involved. 

Invertibility in $\mathrm{CycSp}$ does not give an equivalence with the
unit. We prove the following in two parts: first an equivalence of the
 underlying $THH(Z)$-modules, then that this equivalence is \'etale-locally
one of cyclotomic spectra. Forgetting to underlying spectra, $THH(A)$ and its
inverse $THH(A^{\mathrm{op}})$ are both connective. $THH(Z)$ is a connective 
$\mathbb{E}_\infty$-ring with $\pi_0THH(Z)=Z$. By Lemma \ref{leminvtrivial}, $THH(A)$
is determined as a $THH(Z)$-module by $\pi_0THH(A)=HH_0(A)=A/[A,A]$. The
reduced trace $\mathrm{trd}:A/[A,A]\xrightarrow{\sim}Z$ \cite{ko} identifies this
with $\pi_0THH(Z)$, giving an equivalence of underlying $THH(Z)$-modules
\begin{equation}
	\label{eqPhiAunderlying}
	\Phi_A:THH(A)\xrightarrow{\ \sim\ }THH(Z),
	\qquad \pi_0\Phi_A=\mathrm{trd}.
\end{equation}
Choose a faithfully flat \'etale splitting $Z\to Z'$ with
$A\otimes_ZZ'\cong\mathrm{End}_{Z'}(P)$ (Antieau-Gepner \cite{ag}, Milne
\cite{milne}). Base-changed to $Z'$, $\Phi_A\otimes_ZZ'$ and the Morita trace
$\tau_P$ of Lemma \ref{lemmorita-pic}\,(i) are both $THH(Z')$-module equivalences
$THH(A\otimes_ZZ')\to THH(Z')$ inducing the reduced trace on $\pi_0$. But we do 
 not conclude they are equal: for $R=THH(Z')$, two such equivalences differ
by an element of $\mathrm{Aut}_R(R)\cong GL_1(R)$. The agreement on $\pi_0$ only
places this element in the subgroup $GL_1(R)_1$ over $1\in\pi_0(R)^\times$. It is the
same subgroup analyzed in Lemma \ref{leminvtrivial}, with the homotopy type of the
$1$-connected cover of $R$, which is generally far from trivial. Thus, $\Phi_A\otimes_ZZ'$ is
 not claimed to be equal $\tau_P$, and it is not claimed that $\Phi_A$  is
\'etale-locally cyclotomic.

Instead, we use $\tau_P$. It exists, independently 
of $\Phi_A$, as an equivalence of cyclotomic spectra (Lemma
\ref{lemmorita-pic}\,(i) needs no comparison with $\Phi_A$ to be cyclotomic), and
its  $\pi_0$ is the reduced trace. This is enough for the \'etale-local
consequences below, applied to $\tau_P$  instead of to claim cyclotomic 
refinement of $\Phi_A$.

 The above gives (1) as stated, the
equivalence $\Phi_A$ of underlying $THH(Z)$-modules with $\pi_0\Phi_A=\mathrm{trd}$, 
but not claimed cyclotomic even \'etale-locally.  
 It gives, via $\tau_P$ instead of via $\Phi_A$, (3) \'etale-locally: $TR^r(A\otimes_ZZ';p)\simeq
TR^r(Z';p)$, $TC^-(A\otimes_ZZ';\mathbb{Z}_p)\simeq TC^-(Z';\mathbb{Z}_p)$,
$TP(A\otimes_ZZ';\mathbb{Z}_p)\simeq TP(Z';\mathbb{Z}_p)$,
$TC(A\otimes_ZZ';\mathbb{Z}_p)\simeq TC(Z';\mathbb{Z}_p)$ after base
change along any splitting cover $Z\to Z'$, compatibly with restriction, Frobenius, and
Verschiebung. Whenever $A$ admits a global splitting module $P$, i.e.,
whenever $[A]=0\in\mathrm{Br}(Z)$, $Z'=Z$
is such a cover, $\tau_P$ is already global. Then (1)-(3) hold globally with 
$\Phi_A=\tau_P$  (there is no ambiguity to resolve when $Z'=Z$: both maps
are then of the same choice).
  In general, transforming the \'etale-local cyclotomic statement 
(endowed with $\tau_P$ degreewise on a cover) to a single 
global equivalence of cyclotomic spectra two obstacles occur. The first is gluing the
$\tau_P$ coherently across the cover (the original \v Cech problem of Remark
\ref{remcoherence}). The second is, even having that, identifying the result with the
independently-constructed global $\Phi_A$. We do not resolve these obstacles in this paper.  

For (2): we emphasize that this is not proved by independently reconstructing
$\Phi_{A\otimes_ZZ'}$ from the ingredients above and then comparing it with the restriction
$\Phi_A\otimes_ZZ'$. Two equivalences built by separate constructions can fail 
to agree even when every ingredient entering each construction is in each term compatible with
base change. This is due to that  Lemma \ref{leminvtrivial} only classifies invertible modules
up to a generally nontrivial choice. Instead, part (2) requires no second construction at
all. Let us fix the equivalence $\Phi_A:THH(A)\xrightarrow{\sim}THH(Z)$ produced in part (1).
 For any \'etale $Z\to Z'$, base-change along the \'etale-invariance equivalences
of Lemma \ref{lemetale} sends $\Phi_A$ to a map
\[
	\Phi_A\otimes_ZZ':THH(A\otimes_ZZ')\;\xrightarrow{\ \sim\ }\;THH(Z').
\]
 It is an equivalence since base change along $Z\to Z'$ (i.e., applying
$-\otimes_ZZ'$, equivalently $-\otimes_{THH(Z)}THH(Z')$ on $THH(Z)$-modules) preserves
equivalences. Its effect on $\pi_0$ is $\mathrm{trd}\otimes_ZZ'$. This is again the
reduced trace of $A\otimes_ZZ'$ since the reduced trace is  defined by a natural
transformation $A\to Z$ of $Z$-modules ($a\mapsto\mathrm{trd}(a)$). Hence, it commutes with any
base change $-\otimes_ZZ'$.
 Thus, $\Phi_A\otimes_ZZ'$ already satisfies, by definition,
 every condition part {\rm(1)} imposes on a choice of 
$\Phi_{A\otimes_ZZ'}$. Taking $Z'\to Z''$
a further \'etale map and iterating the same observation, $(\Phi_A\otimes_ZZ')\otimes_{Z'}Z''=
\Phi_A\otimes_ZZ''$ by associativity of base change alone. Thus,  the system
$\{\Phi_A\otimes_ZZ'\}_{Z\to Z'\text{ \'etale}}$ is coherent across the whole \'etale site of
$Z$, with no compatibility to verify beyond associativity of $-\otimes_ZZ'$. This is 
weaker than claiming $\Phi_A$ is canonical (a different choice of $\Phi_A$ in part (1)
restricts to a different, but equally coherent, system) but is exactly the existence statement
part (2) makes.
\qed

\smallskip
\begin{remark}
	\label{remcoherence}
	One might instead try to construct $\Phi_A$ by choosing a splitting cover $Z\to Z'$,
	applying Lemma \ref{lemmorita-pic}\,(i) in each degree on the \v Cech nerve, and  using Lemma \ref{lemmorita-pic}\,(ii) to show that the resulting local equivalences are
	compatible up to homotopy. That compatibility at each object does not supply the 
	higher coherence needed to glue a totalization. 
It does not show that the classifying map of
	the Picard-groupoid action on $THH(Z')$ is null-homotopic as a map of grouplike
	$\mathbb{E}_\infty$-spaces. It shows only what each individual value is. 

	One might think of $A$ \'etale-local triviality (which Antieau-Gepner
	\cite{ag} establish for derived Azumaya algebras over connective schemes, and which is
	 what Theorem \ref{thmazumaya} proof uses). This would lead to the global 
	equivalence, since $THH$ satisfies \'etale descent as a localizing invariant
	\cite{bgt, nik}. But it does not: \'etale descent for $THH$ as a sheaf shows that
 the local objects $THH(A\otimes_ZZ')$ glue to compute $THH(A)$.  
 Indeed, Theorem \ref{thmazumaya} establishes $THH(A)$ globally by other methods.
 What is important is
	 whether the local comparison maps $\tau_P:THH(A\otimes_ZZ')\to THH(Z')$ (which
	exist and are individually cyclotomic, see Lemma \ref{lemmorita-pic}\,(i)) 
 can be chosen
	compatibly, as $Z'$ ranges over the cover, to assemble into a single global map
	respecting the cyclotomic structure. This is a question concerns the local
	trivializations, but not A or $THH(A)$ individually.
Note that the \'etale descent for $THH$
	as an invariant does not given an answer. Antieau-Gepner local triviality of $A$
	is obly used by Theorem \ref{thmazumaya} \'etale-local statement. 

	The argument above supplies $\Phi_A$ globally as an equivalence of 
	underlying spectra (see Lemma \ref{leminvtrivial}), and separately,
 the local Morita trace $\tau_P$ \'etale-locally as an equivalence of cyclotomic spectra 
(Lemma \ref{lemmorita-pic}\,(i)). 
 But we do not have an identification of the following two: on
	$\pi_0$, $\Phi_A\otimes_ZZ'$ and $\tau_P$ agree, 
but agreement on $\pi_0$ is not enough to 
	conclude the maps agree, by the same ambiguity computed in Lemma
	\ref{leminvtrivial}. Thus, there are two obstacles to a single global
	cyclotomic equivalence. 
These are gluing the $\tau_P$ across the cover coherently  
(the classical \v Cech problem ), 
and identifying result the gluing 
	with the independently-constructed $\Phi_A$. We do not consider those here. 

	To remove the second obstacle (note that we do not construct  
the fiber described below) the
	\'etale-local equivalences $\tau_P$ should assemble into a class
	obstructing globalization in the fiber of
	$\mathrm{Pic}(\mathrm{Mod}_{THH(Z)}(\mathrm{CycSp}))
\to\mathrm{Pic}(\mathrm{Mod}_{THH(Z)}(\mathrm{Sp}))$. It is defined 
over the (already trivial, by Lemma \ref{leminvtrivial}) underlying class. 
	Concretely, this fiber should be controlled 
by the difference between the cyclotomic and
	underlying automorphism spaces of $THH(Z')$ fixing the reduced trace 
on $\pi_0$, i.e., by 
	how much of $\mathrm{Aut}_{\mathrm{CycSp}}(THH(Z'))$ 
over that basepoint is not 
	seen by forgetting to $\mathrm{Aut}_{\mathrm{Sp}}(THH(Z'))$. 
A classification of invertible
	objects in $\mathrm{Mod}_{THH(Z)}(\mathrm{CycSp})$ analogous to Lemma \ref{leminvtrivial}
	classification in $\mathrm{Mod}_{THH(Z)}(\mathrm{Sp})$ (were one available, with the
	relevant category of cyclotomic modules and its Picard space defined)  would
	settle this by the same argument used there. 
We are not aware of such a classification in the literature. 
\end{remark}

	\begin{remark}
		\label{remunit}
		The unit map is not the descent equivalence.
		The central inclusion $\iota:Z\hookrightarrow A$ is a map of associative
		rings and induces $THH(\iota):THH(Z)\to THH(A)$. The composite
		$\Phi_A\circ THH(\iota)$ is not the identity. On $\pi_0$ it is
		$\mathrm{trd}(1)=\deg A$, and \'etale-locally it is multiplication by
		$\deg A$ on all of $THH$. For the modular Weyl algebras considered
 below one has
		$\deg A=p^n$. Thus, after $p$-completion, the tautological comparison map
		$THH(\iota)$ is very far from be an equivalence, even though its source and
		target are abstractly equivalent. The equivalence between the quantum
		algebra and its classical shadow is not a unit, but 
a trace phenomenon. 
This is the homotopical form of the ``ghost of
		classical geometry'' of Section \ref{ncbms}.
	\end{remark}

	\begin{remark}
		\label{remforgets}
		We mention, finally, what $THH$ forgets.
		Theorem \ref{thmazumaya} shows that the underlying $THH$ spectrum and its
		transported filtration do not see the
		Brauer class $[A]\in\mathrm{Br}(Z)$. 
The corresponding $TR$ and $TC$ comparisons
		hold \'etale-locally, and globally when $A$ admits a global splitting module.
		The class is retained by the pair
		$(THH(Z)$, $\mathrm{Perf}(A))$, with $\mathrm{Perf}(A)$ regarded as an
		invertible $\mathrm{Perf}(Z)$-module category, and by the $K$-theoretic
		pairings. The Fermi projections of Section \ref{physo} live in
		$K_0(A)$, whose pairing with cyclic and syntomic classes is sensitive to
		$[A]$. In the language of the physical applications, the filtration and its
		Mackey structure are common to the quantum system and its classical shadow,
		while the flux is reflected by the Brauer class, see Corollary
		\ref{corquantumtorus}.
	\end{remark}

	\subsection{Examples}

	\subsubsection{Modular Weyl algebras}
	\label{ssecweyl}

Let $k$ be a perfect field of characteristic $p$ and let $B$ be a smooth
		$k$-algebra (for instance a polynomial ring $k[u_1,\ldots,u_d]$). We use
		 the convention $TR^r(B;p):=THH(B)^{C_{p^{r-1}}}$, so that
		$TR^1(B;p)=THH(B)$. 
The following theorem is due to Hesselholt \cite{hesselholt-acta}. 
	\begin{theorem} 
		\label{thmtr}
		For every $r\ge1$ there is a natural isomorphism
		\[
			W_r\Omega^*_B\otimes_{W_r(k)}S_{W_r(k)}\{\sigma_r\}
			\;\xrightarrow{\ \sim\ }\;\pi_*TR^r(B;p),
			\qquad \deg\sigma_r=2,
		\]
		where $W_r\Omega^*_B$ denotes the ($p$-typical) de Rham-Witt complex of
		$B$ truncated at level $r$ and $S_{W_r(k)}\{\sigma_r\}$ is the polynomial
		algebra on a single generator in degree $2$. Equivalently, in each degree,
		\[
			\pi_qTR^r(B;p)\;\cong\;\bigoplus_{m\ge0}W_r\Omega^{q-2m}_B,
		\]
		the sum being finite since $W_r\Omega^i_B=0$ for $i<0$. 
\end{theorem}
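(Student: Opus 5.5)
The plan is to reduce to the case of a polynomial algebra over $k$ and then to the base case $B=k$, using a de Rham-Witt universal property argument in the style of Hesselholt-Madsen.

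\emph{Step 1 (building the map).} I would first assemble the graded ring $\pi_*TR^\bullet(B;p)$, over all $r$, into a structure with operators $R,F,V$ and a differential $d$. Here $d$ comes from the circle action via Connes' operator, and $\sigma_r$ is the Bökstedt generator, pulled back from $\pi_2TR^r(k;p)$. I would then check the relations of a Witt complex. The unit map $B\to\pi_0 THH(B)$ lifts to a Teichmüller map $[-]:B\to\pi_0TR^r(B;p)$ that identifies $\pi_0TR^r(B;p)\cong W_r(B)$. Because $k$ is perfect and $p>2$, the relations $FdV=d$ and $Fd[a]=[a]^{p-1}d[a]$ hold. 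The universal property of the de Rham-Witt complex then gives a natural map of Witt complexes $W_r\Omega^*_B\to\pi_*TR^r(B;p)$. Since $TR^r$ is a module over $TR^r(k;p)$, whose homotopy is $W_r(k)[\sigma_r]$, this extends linearly to the map in the statement.

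\emph{Step 2 (reductions).} Both sides commute with étale base change. For the right side I would use Lemma \ref{lemetale} together with its $C_{p^n}$-fixed-point version, i.e. étale base change for $TR^r$ (Hesselholt). For the left side this is étale base change for $W_r\Omega^*$ (Illusie). Both sides also commute with filtered colimits. Since $B$ is smooth, it is étale-locally étale over a polynomial ring, so it suffices to treat $B=k[u_1,\dots,u_d]$. For the base case $B=k$, the statement is Bökstedt periodicity, $\pi_*THH(k)=k[\sigma]$, upgraded to $\pi_*TR^r(k;p)=W_r(k)[\sigma_r]$ by induction on $r$ using the isotropy separation/fundamental cofibre sequence $THH_{hC_{p^{r-1}}}\to TR^r\xrightarrow{R}TR^{r-1}$.

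\emph{Step 3 (polynomial algebras).} For $B=k[u_1,\dots,u_d]$, I would use $THH(k[\underline u])\simeq THH(k)\otimes\Sigma^\infty_+B^{cy}(\mathbb N^d)$, the cyclic bar construction of the monoid. Its $C_{p^{r-1}}$-fixed points split along the decomposition of $B^{cy}$ by multidegree into pieces of the form $S^1/C_m$-orbits. I would compute each weight piece via the fundamental cofibre sequence and induction on $r$. In parallel I would compare with the explicit basis of $W_r\Omega^*_{k[\underline u]}$ given by basic Witt differentials (Langer-Zink). The check is that weight by weight the two sides have the same length as $W_r(k)$-modules and that the map is surjective, hence an isomorphism.

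\emph{Main obstacle.} I expect the hardest step to be the weight-by-weight fixed-point computation, in particular controlling the Tate and homotopy-orbit terms for weights divisible by powers of $p$. The same bookkeeping is needed to show that Step 1's map hits the classes $V^s([u]^j)\,dV^t([u]^{j'})$. I would first attempt this for $d=1$ and then handle general $d$ by a Künneth argument for $TR$ of tensor products over $k$; that Künneth step is itself delicate for fixed points, so it will need care of its own.
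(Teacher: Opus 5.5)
The paper gives no argument of its own for this statement; it simply quotes Hesselholt's Acta paper. Your outline follows the standard strategy of that proof: a map out of the universal Witt complex, \'etale base change for $TR^r$ and $W_r\Omega^*$ to reduce to polynomial rings, B\"okstedt periodicity plus the Hesselholt--Madsen computation of $TR^r(k;p)$, and a weight-by-weight analysis of $THH(k)\wedge B^{\mathrm{cy}}(\mathbb N^d)_+$. Steps 1--2 are fine modulo the results you cite. The step that would fail as planned is the passage from $d=1$ to general $d$ ``by a K\"unneth argument for $TR$ of tensor products over $k$''. This is not merely delicate: the K\"unneth map is not an isomorphism on either side. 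Genuine fixed points do not commute with smash products, and correspondingly Witt vectors do not commute with tensor products. Already for $r=2$ in degree $0$, consider the map
\[
W_2(k[x])\otimes_{W_2(k)}W_2(k[y])\longrightarrow W_2(k[x,y])=\pi_0TR^2(k[x,y];p).
\]
The element $V[x]\otimes V[y]$ is nonzero, since it spans a summand $k\otimes_{W_2(k)}k$. Its image is $V[x]\,V[y]=V\bigl([x]\,FV[y]\bigr)=V(p[xy])=0$. The weight-$(1/p,1/p)$ part of the source is spanned by such elements, so the map also misses $V[xy]$. Transfer classes in weights with several non-integral coordinates are invisible to any tensor product. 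A correct ``K\"unneth'' statement would have to be a box product of Mackey functors, which is substantial extra theory you have not set up.

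The way around this is never to take fixed points of a smash product of two cyclotomic spectra. Only take fixed points of $THH$ smashed with suspension spectra of $S^1$-spaces, which can be handled orbit type by orbit type. One option is to work directly in $d$ variables. The multidegree-$\mathbf m$ summand of $B^{\mathrm{cy}}(\mathbb N^d)$ is $\prod_{m_i\neq0}B^{\mathrm{cy}}(\mathbb N;m_i)\simeq\prod_{m_i\neq0}S^1/C_{m_i}$ with diagonal action. Let $g=\gcd(m_i)$ and let $e$ be the number of nonzero $m_i$. A change of basis of the torus identifies this $S^1$-equivariantly with $S^1/C_g\times T^{e-1}$, with trivial action on $T^{e-1}$. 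Hence each multidegree piece of $TR^r(k[u_1,\dots,u_d];p)$ is your one-variable weight-$g$ piece smashed with $T^{e-1}_+$. That is what must be matched with the Langer--Zink basic Witt differentials of weight $\mathbf m/p^{r-1}$. The other option is to induct on $d$ using $THH(A[u])\simeq THH(A)\wedge B^{\mathrm{cy}}(\mathbb N)_+$ with $A=k[u_1,\dots,u_{d-1}]$. You would express the weight pieces through the groups $\pi_*TR^s(A;p)$, $s\le r$, and match them with the corresponding decomposition of $W_r\Omega^*_{A[u]}$ through the $W_s\Omega^*_A$. In that case your one-variable computation must be carried out over an arbitrary coefficient ring $A$, not just over $k$.

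Two smaller points. First, in the base case the fundamental cofibre sequence alone does not determine the connecting maps and extensions in positive degrees; there $R$ is not even surjective, since $R(\sigma_r)=p\lambda_r\sigma_{r-1}$. This is exactly where Hesselholt--Madsen's comparison with the Tate construction is needed, so cite it rather than re-derive it. Second, Step 1 assumes $p>2$, whereas the paper states the theorem for arbitrary $p$. So $p=2$ needs a separate treatment of the Witt-complex structure on $\pi_*TR^\bullet(B;p)$. The relations you list hold in $\pi_*TR^\bullet$ for any ring; perfectness of $k$ plays no role there.
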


The degree-$0$ case
		recovers $\pi_0TR^r(B;p)\cong W_r(B)$, consistent with Hesselholt-Madsen
		\cite{hmwitt}. The operators act on the extra generator by
		$F(\sigma_r)=\sigma_{r-1}$, $V(\sigma_r)=p\sigma_{r+1}$ and
		$R(\sigma_r)=p\lambda_r\sigma_{r-1}$ with $\lambda_r\in W_r(\mathbb{F}_p)^\times$
		\cite{hesselholt-acta}. In particular, the direct-sum
		decomposition above is not compatible with the restriction map $R$. 
On the summand indexed by $m$, $R$ is the de Rham-Witt
		restriction multiplied by $p^m$ times a unit. 
It is only in the inverse limit
		over $R$ that the summands with $m>0$ disappear, which is how one recovers
		$W\Omega^*_B\cong TR_*(B;p)$ \cite{hesselholt-acta}. Corollary
		\ref{corweyl} and the Mackey functor discussion of Section \ref{physo} describe  this structure.
For the modular Weyl algebras this specializes as follows.

	\begin{corollary}
		\label{corweyl}
		Let $k$ be a perfect field of characteristic $p$ and let
\[
		A_n= k\langle x_1,\ldots,x_n,y_1,\ldots,y_n\rangle/
		\bigl([x_i,y_j]-\delta_{ij},\ [x_i,x_j],\ [y_i,y_j]\bigr)_{1\le i,j\le n}
\]
 be the $n$-th Weyl algebra, with generators $x_i$ (multiplication operators) and $y_i$
 (their conjugate derivations).
		Then $A_n$ is Azumaya of degree $p^n$ over its center
		$Z=k[x_1^p,\ldots,x_n^p,y_1^p,\ldots,y_n^p]$ \cite{revoy}, a polynomial ring
		in $2n$ variables, and Theorem \ref{thmazumaya} gives an equivalence of
		underlying spectra
		\begin{equation}
			THH(A_n;\mathbb{Z}_p)\;\simeq\;THH\bigl(k[u_1,\ldots,u_{2n}];
			\mathbb{Z}_p\bigr),
		\end{equation}
		which is \'etale-locally an equivalence of cyclotomic spectra.
 \end{corollary}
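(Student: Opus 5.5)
The plan is to reduce Corollary \ref{corweyl} to Theorem \ref{thmazumaya} by verifying its hypotheses, identifying the center, and then $p$-completing. The argument has four steps.

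\emph{Step 1: the Azumaya structure.} First, $x_i^p$ and $y_i^p$ are central in $A_n$. Indeed, in characteristic $p$ one has $[x_i,y_i^p]=\mathrm{ad}(y_i)^p$-type identities, more precisely $[y_i^p,x_i]=p\,y_i^{p-1}=0$, and similarly for $x_i^p$. Hence the subring $Z=k[x^p,y^p]$ is central. Next, the monomials $x^a y^b$ with $0\le a_i,b_i<p$ form a $Z$-basis of $A_n$. This follows from the PBW basis $\{x^a y^b\}$ of $A_n$ over $k$, so $A_n$ is free of rank $p^{2n}$ over $Z$. For the Azumaya property we cite Revoy \cite{revoy}: the map $A_n\otimes_Z A_n^{\mathrm{op}}\to \mathrm{End}_Z(A_n)$ is an isomorphism, and the center equals $Z$. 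A direct route would be to reduce to $n=1$ via $A_n\cong A_1^{\otimes_k n}$ and $Z_n=Z_1^{\otimes_k n}$, and then check on geometric fibers that $A_1\otimes_Z \kappa$ is the central simple algebra generated by the Weyl relations truncated at $x^p=a$, $y^p=b$. That algebra is $M_p(\kappa)$, acting on $\kappa[x]/(x^p-a)$. Since freeness plus fiberwise central simplicity gives Azumaya, the degree is $p^n$ because the rank is $(p^n)^2$.

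\emph{Step 2: the underlying equivalence.} Applying Theorem \ref{thmazumaya}(1) gives $\Phi_{A_n}:THH(A_n)\xrightarrow{\sim}THH(Z)$. Since $Z$ is a polynomial ring in $2n$ variables, we may rename $u_j$ for $x_i^p,y_i^p$. We then $p$-complete the integral equivalence, as in the notational convention, to obtain the displayed equivalence. This is an equivalence of underlying spectra only.

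\emph{Step 3: the étale-local cyclotomic statement.} Take a faithfully flat étale splitting $Z\to Z'$, which exists by the Azumaya property \cite{milne}. The Morita trace $\tau_P$ of Lemma \ref{lemmorita-pic}(i) gives a cyclotomic equivalence $THH(A_n\otimes_Z Z')\simeq THH(Z')$. Lemma \ref{lemetale} identifies these terms with base changes of $THH(A_n)$ and $THH(Z)$; we then $p$-complete. Following Remark \ref{remscopenarrow}, the phrase ``étale-locally an equivalence of cyclotomic spectra'' must be read via $\tau_P$, not $\Phi_{A_n}$.

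\emph{Main obstacle.} The delicate points are Step 1, namely that the center is exactly $Z$ and that the fibers are matrix algebras, and the correct scoping in Step 3. In Step 1 I would rely on Revoy rather than recompute. In Step 3 I would state explicitly that the cyclotomic claim holds after base change along the cover and is not a claim about $\Phi_{A_n}$. One further option is worth noting: the Brauer class of $A_n$ is that of a crystalline-differential-operator-type algebra and need not vanish, so no global cyclotomic claim is made. However, over the polynomial ring $Z$ one could test whether $k[x]$, viewed as a module over $A_n$, is a global splitting module; this would upgrade the cyclotomic statement to a global one.
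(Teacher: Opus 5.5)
Your argument is correct and follows the paper's route. You take the Azumaya structure from Revoy, and your PBW-basis and fiberwise $M_p(\kappa)$ check is a sound, more explicit substitute for that citation. You then apply Theorem \ref{thmazumaya}(1) and $p$-complete, and you read the \'etale-local cyclotomic clause through $\tau_P$ on a splitting cover, exactly as Remark \ref{remscopenarrow} prescribes.

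The one thing to remove is your closing suggestion that $k[x_1,\ldots,x_n]$ might be a global splitting module. That module is annihilated by every $y_i^p$, since $\partial_i^p=0$ in characteristic $p$. It is therefore a torsion $Z$-module, neither faithful nor projective. In fact no global splitting module can exist. By Quillen--Suslin it would be free of rank $p^n$, which would force $A_n\cong M_{p^n}(Z)$. That is impossible because $A_n$ is a domain. This is precisely the paper's argument that $A_n$ is not globally split, so the \'etale-local qualifier cannot be removed by this route.
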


 Since $Z$ is a
		polynomial ring, every finite projective $Z$-module is free by the
		Quillen-Suslin theorem \cite{quillen}. Thus, a global splitting module would
		express $A_n$ as a matrix algebra over $Z$. As $A_n$ is a domain, whereas 
		$M_{p^n}(Z)$ has zero divisors for $p^n>1$, no such isomorphism exists.

 The Quillen-Suslin/domain consideration 
		shows  that $A_n\not\cong M_{p^n}(Z)$ as $Z$-algebras. 
This in turn
		means $A_n$ has no global splitting module, 
i.e., is not globally split. 
This can be seen as the unwinding of ``globally split''.

One concludes that only via the standar
 identification of Azumaya algebras admitting no global 
		splitting module with a nonzero class in $\mathrm{Br}(Z)$ 
		$[A_n]\ne 0$. 
Note that we do not compute $[A_n]\ne 0$ from the domain argument. Thus, 
		$A_n$ is not globally split 
(cf. Remark \ref{remunit}) and this does not transfer to a single
		global cyclotomic comparison (Remark \ref{remcoherence}). 
This is consistent
		with the classical Auslander-Goldman computation of
		$\mathrm{Br}(Z)$ for a polynomial ring $Z$ over $k$ \cite{ag60}.
 That computation
		identifies the prime to $p$ part of $\mathrm{Br}(Z)$ with $\mathrm{Br}(k)$. This is trivial here since $k$ is a perfect field. But it does not says anything
 about the $p$-primary
		part, where $[A_n]$ belongs to,
 and where such homotopy invariance statements fail in positive characteristic.
 We emphasize that this shows only that
		this particular Morita construction does not globalize. 
The non-existence of a
		global splitting module for $A_n$ does not prove the non-existence of 
		some other global equivalence of cyclotomic spectra 
$THH(A_n;\mathbb{Z}_p)\simeq THH(Z;\mathbb{Z}_p)$.

For every faithfully
		flat \'etale splitting $Z\to Z'$, by Theorem
		\ref{thmtr} applied to the base-changed presentation
		$A_n\otimes_ZZ'\simeq\mathrm{End}_{Z'}(P)\simeq
		\mathrm{End}_{Z'}(k[u_1,\ldots,u_{2n}]\otimes_ZZ')$,
		\begin{equation}
			\pi_qTR^r(A_n\otimes_ZZ';p)\;\cong\;\bigoplus_{j\ge0}
			W_r\Omega^{q-2j}_{Z'}.
		\end{equation}
		The transported filtration of Definition \ref{deftransported} has graded
		pieces given, \'etale-locally, by the Nygaard-graded crystalline cohomology of affine
		$2n$-space over $k$. The Mackey functor invariants
		$\mathcal{I}_r^i(A_n)$ of Section \ref{physo} are defined
		and computable, \'etale-locally, from the equivariant motivic filtration of Hu-Kriz-Somberg
		for polynomial algebras \cite{HKS, hkszou}. 
This introduces Assumption {\rm(C)} for the
		present example specifically 
(\'etale-locally, via the identification with $k[u_1,\ldots,u_{2n}]$), 
		even though {\rm(C)} is not established for Azumaya algebras in general
		(Proposition \ref{propunconditional}). We stress that the restriction, Frobenius, and
		Verschiebung operators $R$, $F$, $V$ appearing in the displayed decomposition (Theorem \ref{thmtr}) are the classical, non-equivariant $TR$-operators of Hesselholt.
They  act 
		in each term on the direct-sum decomposition in the specific way mentioned there. 
		They do not give 
		$C_{p^n}$-equivariant Mackey-functor refinement of assumption {\rm(C)} in Section
		\ref{physo}. {\rm(C)} here is the independent equivariant motivic
		filtration of Hu-Kriz-Somberg for the polynomial ring $k[u_1,\ldots,u_{2n}]$, 
		not a transformation of the $R$, $F$, $V$ formulas of Theorem \ref{thmtr}.
	
	\subsubsection{Quantum tori}
	\label{ssectorus}

We turn to quantum tori at flux of order prime to $p$.
Let $k$ be a perfect field of characteristic $p$ containing a primitive
		$m$-th root of unity $q$, with $p\nmid m$, and let
		\begin{equation}
			\mathcal{A}_q=k\langle U^{\pm1},V^{\pm1}\rangle/(VU-qUV),
		\end{equation}
		be the algebraic quantum torus.
 A direct check of the relations shows $U^m$, $V^m$ are 
		central and generate the center. I.e., any central element is a Laurent polynomial
		in $U$, $V$ fixed by the automorphism $U\mapsto qU$ up to the relation twist, making all exponents to be multiples of $m$.

	\begin{corollary}
		\label{corquantumtorus}
		
 $\mathcal{A}_q$ is Azumaya of degree
		$m$ over its center $Z=k[U^{\pm m},V^{\pm m}]$: it is the symbol
		algebra $(U^m,V^m)_q$ over the Laurent ring $Z$ \cite{milne, ko}.
\end{corollary}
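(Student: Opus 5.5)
The plan is to verify directly that $\mathcal{A}_q$ is a free $Z$-module of rank $m^2$ and that the natural map $\mathcal{A}_q\otimes_Z\mathcal{A}_q^{\mathrm{op}}\to\mathrm{End}_Z(\mathcal{A}_q)$ is an isomorphism. By the definition recalled before Theorem \ref{thmazumaya}, this is exactly what the Azumaya property requires.

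\textbf{Step 1: the center and freeness.} The monomials $U^aV^b$, $(a,b)\in\mathbb{Z}^2$, form a $k$-basis of $\mathcal{A}_q$, by the usual diamond-lemma/PBW argument for the single relation $VU=qUV$. From $U\cdot U^aV^b=q^{-b}U^aV^b\cdot U$ and $V\cdot U^aV^b=q^{a}U^aV^b\cdot V$ one reads off that $U^aV^b$ is central if and only if $m\mid a$ and $m\mid b$. This uses that $q$ is a primitive $m$-th root of unity. A linear combination is central only if each monomial in it is, since conjugation by $U$ and by $V$ acts diagonally on the basis with characters $(a,b)\mapsto (q^{-b},q^{a})$. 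Hence $Z=k[U^{\pm m},V^{\pm m}]$. The monomials $U^iV^j$ with $0\le i,j<m$ then form a $Z$-basis, so $\mathcal{A}_q$ is free of rank $m^2$; in particular it is finite projective and faithful.

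\textbf{Step 2: the Azumaya map.} Both sides of $\mathcal{A}_q\otimes_Z\mathcal{A}_q^{\mathrm{op}}\to\mathrm{End}_Z(\mathcal{A}_q)$ are free $Z$-modules of rank $m^4$. It therefore suffices to show the map is an isomorphism after base change to every residue field $\kappa(\mathfrak p)$ of $Z$, or alternatively that the source is central simple fiberwise. I would use the fiberwise criterion: over each point, $U^m\mapsto \alpha$ and $V^m\mapsto\beta$ with $\alpha,\beta\in\kappa(\mathfrak p)^\times$ units, since they are units in $Z$. The fiber is the symbol algebra $(\alpha,\beta)_q$ over $\kappa(\mathfrak p)$. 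This fiber is central simple of dimension $m^2$, by the classical argument: any nonzero two-sided ideal, after repeated commutators with $U$ and $V$ (which separate the $m^2$ distinct characters $(q^{-j},q^{i})$), contains a monomial $U^iV^j$, which is a unit. That central simple fibers imply the Azumaya condition for a finite free algebra with $A\otimes\kappa(\mathfrak p)$ central simple is standard \cite{milne, ko}. This identifies $\mathcal{A}_q$ with the symbol algebra $(U^m,V^m)_q$ over $Z$.

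\textbf{Expected obstacle.} The main step requiring care is the character-separation argument in the fibers. It needs the $m^2$ characters $(q^{-j},q^{i})$, $0\le i,j<m$, to be pairwise distinct, which holds because $q$ is primitive of order $m$. It also needs $m$ to be invertible in $k$ (from $p\nmid m$), so that the averaging projectors $\frac1m\sum_s q^{-ks}\,\mathrm{Ad}_U^s$ onto eigenspaces are defined. With $p\mid m$ no primitive $m$-th root of unity exists, which is why this hypothesis is built into the statement. The degree is then $m$, since the rank is $m^2$.
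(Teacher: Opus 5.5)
Your proposal is correct and follows essentially the paper's route. The center is computed from the diagonal action of conjugation by $U$ and $V$ on the monomial basis, and $\mathcal{A}_q$ is then recognized as the symbol algebra $(U^m,V^m)_q$ with unit entries over $Z$. The only difference is that the paper takes the Azumaya property of such a symbol algebra from \cite{milne, ko}, whereas you prove it directly: you use the determinant/fiberwise criterion for a map between free $Z$-modules of equal rank, together with central simplicity of $(\alpha,\beta)_q$ over each residue field, which is a correct and self-contained verification.
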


 Set $u=U^m$,
		$v=V^m\in Z^\times$ for the two central units generating $Z$ over $k$, 
to keep them
		notationally distinct from their Kummer classes 
$[u]$, $[v]\in H^1_{\mathrm{\acute et}}(\operatorname{Spec}Z,\mu_m)$,  
 the connecting maps for $1\to\mu_m\to
		\mathbb{G}_m\xrightarrow{(-)^m}\mathbb{G}_m\to1$ applied to $u$, $v$. 

		Let us mention,  what the identification 
$H^2_{\mathrm{\acute et}}(\operatorname{Spec}Z,\mu_m^{\otimes2})\cong\mathrm{Br}(Z)[m]$
		requires, since it is not canonical in general: 

 {\rm(a)} $Z$ regular, Noetherian,
		and $m$ invertible on $Z$. 
Both hold, $Z$ being a smooth affine Laurent ring over
		$k$ with $m$ invertible since $p\nmid m$ and $k$ has characteristic $p$; 

{\rm(b)} $\operatorname{Pic}(Z)/m=0$. 
		The Kummer sequence $1\to\mu_m\to\mathbb{G}_m\xrightarrow{(-)^m}\mathbb{G}_m\to1$ on
		$\operatorname{Spec}Z$ gives a long exact sequence in \'etale cohomology
		\begin{equation}
			H^1_{\mathrm{\acute et}}(Z,\mathbb{G}_m)\xrightarrow{\;\cdot m\;}
			H^1_{\mathrm{\acute et}}(Z,\mathbb{G}_m)\to
			H^2_{\mathrm{\acute et}}(Z,\mu_m)\to
			H^2_{\mathrm{\acute et}}(Z,\mathbb{G}_m)\xrightarrow{\;\cdot m\;}
			H^2_{\mathrm{\acute et}}(Z,\mathbb{G}_m). 
		\end{equation}
		Using $H^1_{\mathrm{\acute et}}(Z,\mathbb{G}_m)=\operatorname{Pic}(Z)$ and
		$H^2_{\mathrm{\acute et}}(Z,\mathbb{G}_m)=\mathrm{Br}(Z)$ 
(for $Z$ regular this is
		the full cohomological Brauer group, which agrees with the classical Brauer group by
		Auslander-Goldman-type results for our affine $Z$ \cite{ag60,grothbrauer}), this
		reads
		\begin{equation}
			\operatorname{Pic}(Z)\xrightarrow{\;\cdot m\;}\operatorname{Pic}(Z)\to
			H^2_{\mathrm{\acute et}}(Z,\mu_m)\to\mathrm{Br}(Z)[m]\to0.
		\end{equation}
	The identification $H^2_{\mathrm{\acute et}}(Z,\mu_m)\cong\mathrm{Br}(Z)[m]$ we use
		is the statement that the left-hand map into 
$H^2_{\mathrm{\acute et}}(Z,\mu_m)$ vanishes. Equivalently, that 
$\operatorname{Pic}(Z)\xrightarrow{\cdot m}\operatorname{Pic}(Z)$ 
is surjective,  which holds automatically, for every $m$,
		when $\operatorname{Pic}(Z)/m=0$. 
I.e., in particular, when $\operatorname{Pic}(Z)=0$. 
		 This holds because $Z$ is a localization of a polynomial ring
		over a field, hence a UFD, hence $\operatorname{Pic}(Z)=0$. Thus, the
		quotient vanishes for every $m$.

 {\rm(c)} a chosen primitive $m$-th root of
		unity $q\in k$, giving a trivialization 
$\mu_m^{\otimes2}\cong\mathbb{Z}/m$ and
		hence $\mu_m\cong\mathbb{Z}/m$ compatibly. 
 We have fixed $q$ from the beginning for this reason. 

With {\rm(a)}-{\rm(c)} in hand, the Brauer class of
		$\mathcal{A}_q$ is the cup product
		\begin{equation}
			[\mathcal{A}_q]=[u]\cup[v]\in
			H^2_{\mathrm{\acute et}}(\operatorname{Spec}Z,\mu_m^{\otimes2})
			\ \cong\ \mathrm{Br}(Z)[m]. 
		\end{equation}
		The isomorphism on the right being the Merkurjev-Suslin norm-residue
		identification \cite{ms} composed with the trivialization of {\rm(c)}. 
When $k$
		is not algebraically closed, $\mathrm{Br}(Z)$
		also receives a contribution from $\mathrm{Br}(k)$, disjoint from the
		cup-product classes displayed here, which arise from $Z$ Laurent-ring structure
		over $k$.

		We now give the nontriviality argument: 
$Z=k[U^{\pm m},V^{\pm m}]$ is a torus. Thus, 
	$U^m=u$ is invertible on $\operatorname{Spec}Z$ and $U=0$ is not a divisor of
		this affine scheme since there is no such locus to take a residue along. 
The  corrected
		argument instead passes to the fraction field. Let $K=k(u,v)=k(U^m,V^m)$ be the
		function field of $Z$. Out assumptions fixed at the beginning 
($p\nmid m$ and the
		chosen primitive $m$-th root of unity $q\in k$)  remain valid over $K$ exactly
		as over $Z$, since $K\supset Z$ and $k\subset Z$ are unaffected when passing to the
		fraction field. Thus, the Merkurjev-Suslin identification of {\rm(c)} above applies to
		$K$ with the same trivialization of $\mu_m^{\otimes2}$. The symbol algebra
		description is compatible with this passage in the following sense. The
		generic fiber $\mathcal{A}_q\otimes_ZK$ is the symbol algebra $(u,v)_q$ over $K$. 
		 Base change of Azumaya algebras along $Z\to K$ commutes with the symbol 
		construction, since both are built from the same presentation
		$k\langle U^{\pm1},V^{\pm1}\rangle/(VU-qUV)$ tensored up. Its Brauer class 
		$[u]\cup[v]\in\mathrm{Br}(K)[m]$ under the Merkurjev-Suslin identification over $K$
		is the restriction, along $\mathrm{Br}(Z)\to\mathrm{Br}(K)$, of the class
		$[\mathcal{A}_q]=[u]\cup[v]\in\mathrm{Br}(Z)[m]$ defined over $Z$ above. 
Cup
		products are compatible with restriction to an open (here, the generic point). Thus, 
		the two descriptions of ``$[u]\cup[v]$'' (over $Z$ and over $K$) agree under this
		restriction map. Computing the residue on the $K$-class  computes a 
		residue of the restriction of the $Z$-class, as required for the argument below.
		Now let $\partial_u:H^2_{\mathrm{\acute et}}(K,\mu_m^{\otimes
		2})\to H^1_{\mathrm{\acute et}}(k(v),\mu_m)$ be the residue map at the discrete
		valuation $\mathrm{ord}_u$ on $K$ defined by $u$, 
i.e., at the divisor $u=0$ of the smooth compactification of $\operatorname{Spec}Z$, which does exist as a divisor
		there even though not on the affine torus.
 By the standard formula for the
		residue of a symbol, $\partial_u\bigl([u]\cup[v]\bigr)=[v]|_{k(v)}\in H^1_{\mathrm{\acute et}}(k(v),\mathbb{Z}/m)$. 
The Kummer class of $v$ over the residue field $k(v)$. 
		It is nonzero when $v$ is not an $m$-th power in $k(v)^\times$, 
		since here $v$ is a free coordinate, 
algebraically independent of $u$, on which the
		residue field function field $k(v)$ is a rational function field and $v$ generates
		it.
 Hence the class $[u]\cup[v]$ is nonzero already in $\mathrm{Br}(K)$. Since $Z$ 
		is a regular integral domain, the restriction map $\mathrm{Br}(Z)\to\mathrm{Br}(K)$ is
		injective 
	(purity for the Brauer group of a regular integral Noetherian scheme
		\cite{grothbrauer}, see also
		\cite{milne} for the \'etale-cohomological account). 
 The
		injectivity is exactly what lets nontriviality after passing to the generic point
		detect nontriviality on $\operatorname{Spec}Z$. Therefore $[\mathcal{A}_q]=[u]
		\cup[v]\ne0$ already in $\mathrm{Br}(Z)$, for every $m>1$. 
The case $m=1$ is the
		commutative torus. Equivalently, one may 
		argue via a smooth compactification
  of the torus and the localization sequence in
		\'etale cohomology, without passing to the fraction field. 
We use the fraction-field
		formulation because the residue map and purity for $\mathrm{Br}$ are most 
		stated there. Thus,  for every $m>1$ in this family, 
		$\mathcal{A}_q$ is not globally split, and Theorem
		\ref{thmazumaya} gives an equivalence of underlying spectra
		$$THH(\mathcal{A}_q;\mathbb{Z}_p)\simeq THH(k[U^{\pm m},V^{\pm m}];
		\mathbb{Z}_p),$$
		\'etale-locally cyclotomic (Remark \ref{remcoherence}), with the transported
		filtration regulated by the crystalline
		cohomology of the split torus.

	By contrast, consider now $q\in1+p\mathbb{Z}_p$ of infinite order, a
	different, mixed-characteristic setting, since such $q$ exists only after base
	change from $k$ to (the Witt vectors or a further extension of) $\mathbb{Z}_p$,
	not within characteristic $p$ (regime {\rm(b)} of Remark \ref{remppower}
	below). There, $\mathcal{A}_q$ reduces modulo $p$ to a
	 commutative torus. As a commutative ring it is trivially Azumaya of degree $1$. 
	 But the nontrivial finite Azumaya description of degree $m$ over the original,
	smaller center $Z=k[U^{\pm m},V^{\pm m}]$ that Corollary \ref{corquantumtorus} uses
	is exactly what degenerates. Capturing this $p$-adic deformation requires trace-map methods
	 instead of   the descent argument of Theorem \ref{thmazumaya}. 

	\begin{remark}
		\label{remppower}
		We consider here a flux with $p$-power denominator.
		The arithmetic of the flux parameter divides into three regimes, only the
		first of which is covered by Theorem \ref{thmazumaya} in characteristic
		$p$.

		\smallskip
		\noindent{\rm(a)} \emph{$q$ of order prime to $p$}: Corollary
		\ref{corquantumtorus}.

		\smallskip
		\noindent{\rm(b)} \emph{$q\in1+p\mathbb{Z}_p$ of infinite order}: the
		$p$-adic deformation regime of Section \ref{ssectorus}. Here 
		$\mathcal{A}_q$ is not Azumaya over a commutative center (mod $p$ it
		becomes commutative), and the relevant structure is the trace map and the
		$p$-adic deformation, not descent.

		\smallskip
		\noindent{\rm(c)} \emph{$q$ a primitive $p^s$-th root of unity}: no such
		$q\neq1$ exists in characteristic $p$, since $(q-1)^{p^s}=q^{p^s}-1=0$ in a
		domain leads to $q=1$. The quantum torus at $p$-power flux therefore lives
		only in mixed characteristic, over $\mathcal{O}_K$,
		$K=\mathbb{Q}_p(\zeta_{p^s})$. There, $\mathcal{A}_q$ is Azumaya over its
		center after inverting $p$, but its special fiber is a commutative
		torus. The algebra degenerates to its classical shadow at the prime $p$.
		Constructing an integral filtration over $\mathcal{O}_K$ interpolating the
		generic-fiber Azumaya descent with this special-fiber degeneration is, in
		our view, the correct arithmetic formulation of the Hofstadter regime, and
		we pose it as an open problem. Note that $p$-complete $THH$ of the generic
		fiber alone vanishes ($THH(-;\mathbb{Z}_p)$ of a $\mathbb{Q}$-algebra is
		zero), thus the integral, $\mathcal{O}_K$-level formulation results. 
	\end{remark}

	\begin{remark}
		\label{remfrobconst}
		We comment here on Frobenius-constant quantizations.
		The Weyl algebra is the basic example of the reduction of a
		Frobenius-constant quantization in the sense of Bezrukavnikov-Kaledin
		\cite{bk}. Frobenius-constancy supplies a central subring generated by
		$p$-th powers. Whenever the quantization is, in addition, finite over
		that subring with the resulting map to the endomorphism algebra an
		equivalence (as we verify  for the Weyl algebra
		below,  instead of   claiming it for a general Frobenius-constant
		quantization of a smooth affine symplectic variety $(X,\omega)$ over $k$)
		  the associated algebra is Azumaya over (a localization of) the
		Frobenius twist $X^{(1)}$. It falls within the scope of Theorem
		\ref{thmazumaya}, with $Z$ the corresponding coordinate ring. The quantum torus
		of Section \ref{ssectorus} is Azumaya by the different, prime to $p$ mechanism
		of Corollary \ref{corquantumtorus}. This places
		the constant-Poisson deformations of Section \ref{ncbms} within the
		theory whenever these hypotheses are checked.
	\end{remark}

	\subsection{Consequences for Section \ref{physo}}

Let $\mathcal{A}_\star$ be an Azumaya algebra over a quasisyntomic
		commutative ring $Z$, equipped with the transported filtration of
		Definition \ref{deftransported}.

	\begin{proposition}
		\label{propunconditional}
		 Assumptions {\rm(A)}-{\rm(B)} of 
		Section \ref{physo} hold for $\mathcal{A}_\star$, in their module-valued form: the filtration
		$F^\bullet THH(\mathcal{A}_\star;\mathbb{Z}_p)$ exists as a filtered $THH(Z)$-module
		(see Proposition \ref{proptransported}; $THH(\mathcal{A}_\star)$ carries no canonical
		multiplicative structure of its  for $\mathcal{A}_\star$ only associative), refines the BMS
		filtration when $\mathcal{A}_\star$ is commutative (in which case
		$\Phi_A$ is the identity), and sends derived Morita equivalences of
		Azumaya algebras to filtered equivalences of the underlying filtered
		spectrum. 
\end{proposition}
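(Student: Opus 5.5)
The plan is to verify the three clauses of Proposition \ref{propunconditional} separately, each by reduction to results already established.

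\emph{Existence.} Fix a choice of $\Phi_A$ as in Theorem \ref{thmazumaya}(1). Definition \ref{deftransported} then produces $F^\bullet THH(\mathcal{A}_\star;\mathbb{Z}_p)$ as a pullback in $\mathrm{Mod}_{THH(Z;\mathbb{Z}_p)}$. Proposition \ref{proptransported} supplies completeness and exhaustiveness and shows that each $F^n$ is a module over $F^\bullet_{\mathrm{BMS}}THH(Z;\mathbb{Z}_p)$. Because pullback along an equivalence simply transports $F^n_{\mathrm{BMS}}$, this gives the module-valued form of (A).

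\emph{Commutative case.} Suppose $\mathcal{A}_\star$ is commutative. Being Azumaya, it is then its own center, so $\mathcal{A}_\star=Z$ and $\deg=1$. The reduced trace is the identity, so $\mathrm{id}_{THH(Z)}$ is an admissible choice of $\Phi_A$ with $\pi_0=\mathrm{trd}$. I would stipulate this choice. The pullback $THH(Z)\times_{THH(Z)}F^n_{\mathrm{BMS}}$ along the identity is then $F^n_{\mathrm{BMS}}$ itself, so the transported filtration restricts to the BMS filtration.

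\emph{Morita invariance (B).} Let $\mathrm{Perf}(\mathcal{A}_\star)\simeq\mathrm{Perf}(\mathcal{A}'_\star)$ be a $Z$-linear derived Morita equivalence. It induces an equivalence $\mu:THH(\mathcal{A}_\star)\simeq THH(\mathcal{A}'_\star)$ of $THH(Z)$-modules (Lemma \ref{lemmorita-pic}(i), via Morita invariance of $THH$). Since both $\Phi_{A'}\circ\mu$ and $\Phi_A$ are $THH(Z)$-linear equivalences to $THH(Z)$, they differ by an automorphism $g\in GL_1(THH(Z))$, by Lemma \ref{leminvtrivial}(iii). I would then define the comparison $F^nTHH(\mathcal{A}_\star)\to F^nTHH(\mathcal{A}'_\star)$ as the map induced on pullbacks by $\mu$ on the left leg and $g$ on the middle term.

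\emph{Main obstacle.} The delicate step is that $g$ need not preserve $F^\bullet_{\mathrm{BMS}}$. I would first treat the case where the Morita equivalence commutes with the traces, so that $\Phi_{A'}$ can be chosen as $\Phi_A\circ\mu^{-1}$; then $g=\mathrm{id}$ and the comparison is tautologically filtered. This is exactly the "up to the same ambiguity'' formulation of Proposition \ref{proptransported}, and the proposition should be read in that sense: the choices of $\Phi$ are transported along $\mu$. Otherwise, note that $g$ is an $R$-linear automorphism of $R=THH(Z;\mathbb{Z}_p)$, hence multiplication by a unit of $\pi_0$-degree-zero homotopy. Such a multiplication preserves the multiplicative BMS filtration whenever the unit lies in $F^0$, which is all of $THH$ since $F^0=THH$. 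So $g$ is filtered and the comparison is a filtered equivalence.
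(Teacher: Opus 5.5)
Your existence and commutative-case arguments match the paper's. Its proof simply cites Definition~\ref{deftransported} and Proposition~\ref{proptransported}, with $\Phi_A=\mathrm{id}$ when $\mathcal{A}_\star=Z$.

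For (B) you take a genuinely different, and stronger, route. The paper gets Morita invariance by citing the Brauer-invariance clause of Proposition~\ref{proptransported}. That clause only asserts that \emph{some} Morita equivalence identifies the two transported filtrations \emph{up to} the automorphism ambiguity of Lemma~\ref{leminvtrivial}, and it explicitly declines to say whether that automorphism respects $F^\bullet_{\mathrm{BMS}}$. You instead build the comparison on pullbacks directly from $\mu$ and $g=\Phi_{A'}\circ\mu\circ\Phi_A^{-1}$, and then remove the ambiguity.

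The key step is correct. $F^\bullet_{\mathrm{BMS}}$ is multiplicative with $F^0_{\mathrm{BMS}}\simeq THH(Z;\mathbb{Z}_p)$, so the whole filtered object is a diagram in $\mathrm{Mod}_{THH(Z;\mathbb{Z}_p)}$. Multiplication by a point of $GL_1(THH(Z;\mathbb{Z}_p))$ is a natural endomorphism of the identity functor of that module category. Hence every $THH(Z;\mathbb{Z}_p)$-linear automorphism of $THH(Z;\mathbb{Z}_p)$ lifts to a filtered automorphism compatible with the maps $F^n\to F^0$. State it in this form rather than as ``a unit lying in $F^0$'', which is looser. Also note that the $THH(Z)$-linearity of $\mu$ comes from $THH$ being lax symmetric monoidal on $Z$-linear categories, not from Lemma~\ref{lemmorita-pic}(i) as such.

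This argument makes your first sub-case (Morita equivalences compatible with the traces) superfluous. It also covers cases that sub-case misses, e.g.\ composites with the shift functor, which act by $-1$ on $THH$ and hence by $-\mathrm{trd}$ on $\pi_0$.

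What your route buys is a cleaner statement than the paper's. Every $Z$-linear derived Morita equivalence gives a filtered equivalence, independently of the choices of $\Phi_A$ and $\Phi_{A'}$. One can push this further: since $\mathrm{Map}_{R}(R,R)\simeq\Omega^\infty R$, an $R$-linear self-map of $R$ is determined up to homotopy by its effect on $\pi_0$. So two choices of $\Phi_A$ with the same $\pi_0$ are already homotopic, and the ambiguity of Lemma~\ref{leminvtrivial}(iii) concerns only higher coherences. The paper's citation-based route needs no new input beyond Proposition~\ref{proptransported}, but it consequently proves only the weaker, hedged form.
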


Theorem \ref{thmazumaya} additionally identifies the ambient cyclotomic
		spectra \'etale-locally on $\operatorname{Spec}Z$. 
This is a comparison of the underlying
		cyclotomic spectra, not the $C_{p^n}$-equivariant refinement of the graded
		pieces with Mackey compatibility that Assumption {\rm(C)} asks for. 
In particular, Proposition \ref{propinvariance} holds for the pairing of {\rm(A)}-{\rm(B)}
		\'etale-locally for gapped systems whose observable algebras are Azumaya
		over quasisyntomic centers, and globally whenever the observable algebra
		admits a global splitting module. 
Its Mackey-functor conclusion additionally requires {\rm(C)},
		which is separate from what is proved here. 
We mention here the following. Assumption  {\rm(C)}  is not proved for Azumaya algebras 
	in general by this proposition or by Theorem \ref{thmazumaya}, only the ambient
	underlying cyclotomic comparison is.

	\begin{proof}
		Existence and the commutative comparison are Definition
		\ref{deftransported} and Proposition \ref{proptransported}. Morita
		invariance within the Azumaya class is the Brauer invariance of
		Proposition \ref{proptransported}, giving {\rm(A)}-{\rm(B)} in their module-valued form.
		The \'etale-local comparison of ambient cyclotomic spectra is Theorem
		\ref{thmazumaya}. It enters the {\rm(A)}-{\rm(B)} part of the proof of Proposition
		\ref{propinvariance} but is not used there as a proof of {\rm(C)}.
	\end{proof}

	\begin{remark}
		Theorem \ref{thmazumaya} clarifies the programme
		of Section \ref{ncbms}. For Azumaya observables, the noncommutative
		filtration exists and carries  the information of the center, in the
		 sense of Proposition \ref{proptransported}. It is well defined up to the
		automorphism ambiguity of the chosen $\Phi_A$, not canonically. 
		All quantum data is concentrated in the Brauer class and in the
		module-categorical and $K$-theoretic pairings of Remark \ref{remforgets}.
		The frontier for a noncommutative prismatic theory is therefore the
		non-Azumaya regime which is regime {\rm(b)} of Remark \ref{remppower}, and
		general $\hbar$-adic deformations before specialization, where no
		commutative shadow is available and the even-filtration and Hochschild-Witt
		approaches discussed in Section \ref{furtho} become essential.
	\end{remark}

\section{Derived Azumaya descent: underlying and \'etale-local comparisons in higher algebra}
\label{doloto}

The noncommutative motivic filtration postulated in previous sections requires a solid  homotopical foundation. In this section, we generalize the classical Azumaya descent to the context of stable $\infty$-categories and $\mathbb{E}_\infty$-rings.

To position this result against the existing literature, we note that the underlying equivalence of topological Hochschild homology spectra is an assembly of established $\infty$-categorical ingredients, specifically, the Morita invariance of $THH$ as a localizing invariant, \'etale descent, and the triviality of the Picard action, which belongs to the orbit of the foundational work of To\"en \cite{toen} and Antieau-Gepner 
\cite{ag}. On the algebraic side, this generalizes the classical theorem of Corti\~nas and Weibel \cite{CW},
which established that the ordinary Hochschild homology of an Azumaya algebra agrees with that of its center.

The contribution of this section is the derived generalization (the trace argument and the connectivity hypothesis it requires)   instead of   the cyclotomic upgrade in isolation. Once $\Phi_A$ is built by Morita/trace means inside $\mathrm{CycSp}$, as in Section \ref{azumayo},  belonging to cyclotomic spectra is close to formal (Proof of Theorem \ref{thmderived}(3) below). By establishing the derived Azumaya descent, we supply the underlying $THH$-comparison that a transport of the Bhatt-Morrow-Scholze (BMS) filtration to this noncommutative setting would require. Unlike Theorem \ref{thmazumaya}, we do not carry out that transport in the derived case (see Table \ref{tabsummary}) \cite{bms}.

We use the same notational convention as Section \ref{azumayo}: $THH(A)$ denotes the
integral spectrum, and $THH(A;\mathbb{Z}_p):=THH(A)^\wedge_p$ its $p$-completion, with
integral statements ($\Phi_A$, base change, Lemma \ref{lemkunnethhigher}) written
using $THH(-)$ and $p$-completion-dependent statements ($TC^-$, $TP$, $TC$) written
using $THH(-;\mathbb{Z}_p)$. 
  $THH(-)$ denotes topological Hochschild homology equipped with its cyclotomic structure, regarded as a localizing invariant of stable $\infty$-categories via $THH(A) = THH(\mathrm{Perf}(A))$. All tensor products are implicitly derived tensor products in the symmetric monoidal $\infty$-category of spectra.

\subsection{Statement of the derived Azumaya descent theorem}

Recall that for an $\mathbb{E}_\infty$-ring $Z$, an associative $Z$-algebra $A$ is called a \textit{derived Azumaya algebra} if $A$ is a dualizable (perfect) and faithful $Z$-module and the canonical map $A \otimes_Z A^{\mathrm{op}} \to \mathrm{End}_Z(A)$ is an equivalence in $\mathrm{Mod}_Z$. This implies that $A$ is \'etale-locally equivalent to an endomorphism algebra of a perfect module.

\begin{lemma}
	\label{lempi0azumaya}
	Let $Z$ be a connective $\mathbb{E}_\infty$-ring and $A$ a connective derived Azumaya
	$Z$-algebra. Then $\pi_0(A)$ is an ordinary Azumaya $\pi_0(Z)$-algebra, and the
	composite
	\[
		\pi_0(A)\longrightarrow\pi_0(A)/[\pi_0(A),\pi_0(A)]\xrightarrow{\ \sim\ }\pi_0(Z)
	\]
	given by the classical reduced trace of the Azumaya algebra $\pi_0(A)$ is independent
	of the choice, among \'etale-local presentations $A\otimes_ZZ'\simeq
	\mathrm{End}_{Z'}(P)$ equipped by Antieau-Gepner \cite{ag}, used to establish it. 
 We call this composite the reduced Morita trace of $A$. 
\end{lemma}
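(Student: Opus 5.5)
The plan is to reduce everything to $\pi_0$ and then use classical facts about ordinary Azumaya algebras and their reduced trace.

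\emph{Step 1: $\pi_0(A)$ is Azumaya over $\pi_0(Z)$.} Since $A$ is connective and perfect over the connective ring $Z$, the base change $A\otimes_Z\pi_0(Z)$ is a connective perfect $\pi_0(Z)$-module with $\pi_0$ equal to $\pi_0(A)$. The equivalence $A\otimes_ZA^{\mathrm{op}}\simeq\mathrm{End}_Z(A)$ is compatible with base change along $Z\to\pi_0(Z)$, because $A$ is dualizable and so $\mathrm{End}_Z(A)\otimes_Z\pi_0Z\simeq\mathrm{End}_{\pi_0Z}(A\otimes_Z\pi_0Z)$. Next I would use the Antieau--Gepner local triviality already invoked in Theorem~\ref{thmazumaya}: after a faithfully flat \'etale cover $Z\to Z'$ we have $A\otimes_ZZ'\simeq\mathrm{End}_{Z'}(P)$ with $P$ perfect. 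Here $\pi_0(Z')=\pi_0(Z)\otimes_{\pi_0Z}\pi_0Z'$ is \'etale over $\pi_0 Z$, since \'etale maps of connective rings are flat on homotopy. Connectivity of $A$ should force $P$ to be, locally, a shift of a connective module $P_0$ whose dual is also connective up to the same shift. Replacing $P$ by $\Sigma^{-n}P$, as in the proof of Lemma~\ref{leminvtrivial}(i), we may take $P$ to be locally a projective $Z'$-module in degree $0$, that is, a summand of $Z'^{\,r}$. Then $\pi_0(A)\otimes_{\pi_0Z}\pi_0Z'\cong\pi_0\mathrm{End}_{Z'}(P)=M_r(\pi_0 Z')$ locally. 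Hence $\pi_0(A)$ is \'etale-locally a matrix algebra, and therefore classically Azumaya \cite{grothbrauer, milne}.

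\emph{Main obstacle.} The hard point is exactly this connectivity step: showing that a connective derived Azumaya algebra has splitting modules which are locally shifts of flat modules concentrated in one degree. My first attempt would be as follows. At a residue field $\kappa$ of $\pi_0Z'$, the algebra $\mathrm{End}_\kappa(P\otimes\kappa)$ is connective only if the graded vector space $P\otimes\kappa$ is concentrated in a single degree. Indeed, if it had two nonzero degrees $a<b$, the endomorphism algebra would have a nonzero class in degree $a-b<0$. Nakayama's lemma for perfect connective modules then spreads this to a neighborhood.

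\emph{Step 2: independence of the reduced trace.} For an ordinary Azumaya algebra $B=\pi_0(A)$ over $R=\pi_0(Z)$, the reduced trace is characterized intrinsically. It is the unique $R$-linear map $B\to R$ which, after some (equivalently, any) \'etale splitting $B\otimes_RR'\cong M_r(R')$, becomes the matrix trace. Two presentations over covers $R',R''$ can be compared on the common refinement $R'\otimes_RR''$. There the two matrix structures differ by an automorphism of $M_r$, which is \'etale-locally inner by Skolem--Noether \cite{ko, milne}. Equivalently, the two splitting modules differ by an invertible $L$, as in Lemma~\ref{lemmorita-pic}(ii). The ordinary trace is invariant under conjugation, so the two local traces agree. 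By faithfully flat descent of maps $B\to R$, the glued map is independent of the presentation. It factors through $B/[B,B]$ because trace kills commutators, and the induced map $B/[B,B]\to R$ is an isomorphism by \cite{ko}; alternatively this can be checked \'etale-locally, where it is the statement $M_r/[M_r,M_r]\cong R$ via the trace. Finally, the derived presentations $A\otimes_ZZ'\simeq\mathrm{End}_{Z'}(P)$ induce on $\pi_0$ precisely classical presentations of this type, by Step 1. So the composite depends only on $\pi_0(A)$, which is the claimed independence.
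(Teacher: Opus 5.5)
Your proposal is correct and follows essentially the paper's route. You use the Antieau--Gepner splitting, then the residue-field argument that connectivity of $\mathrm{End}_{\kappa}(P\otimes\kappa)$ forces $P\otimes\kappa$ into a single degree, then spread and shift $P$ to a degree-zero projective, conclude that $\pi_0(A)$ is classically Azumaya from its étale-local matrix presentation, and get independence from conjugation (equivalently, invertible-module) invariance of the trace. The differences are cosmetic: you work on Zariski neighborhoods instead of building the paper's global clopen-shifted module $Q$, and you should state explicitly that faithfulness of $A$ gives $r\ge 1$ at every point, which the paper checks.
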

 
\begin{proof}
	Fix such a presentation $A\otimes_ZZ'\simeq\mathrm{End}_{Z'}(P)$, $P$ perfect and
	faithful over $Z'$. Since $P$ is a perfect (dualizable) $Z'$-module, $\mathrm{End}_{Z'}(P)
	\simeq P\otimes_{Z'}P^\vee$, and dualizable objects, together with their duals and
	evaluation/coevaluation maps, are preserved by every symmetric monoidal functor
	\cite{ha}. Applying the base-change functor $-\otimes_{Z'}\kappa(x)$, which is
	symmetric monoidal, therefore gives
	$\mathrm{End}_{Z'}(P)\otimes_{Z'}\kappa(x)\simeq(P\otimes_{Z'}\kappa(x))\otimes_{\kappa(x)}
	(P\otimes_{Z'}\kappa(x))^\vee\simeq\mathrm{End}_{\kappa(x)}(P\otimes^L_{Z'}\kappa(x))$,
	which is the base-change identity used below. This requires only perfectness of $P$, not
	flatness or any special property of $\kappa(x)$.

	At a point $x\in\operatorname{Spec}\pi_0(Z')$ with residue field
	$\kappa(x)$, $P\otimes^L_{Z'}\kappa(x)$ is a perfect complex over a field, hence
	(non-canonically) a sum of shifts of $\kappa(x)$. Let $a(x)\le b(x)$ be the minimal and
	maximal degrees in which it is nonzero. If $a(x)<b(x)$, the graded endomorphism complex
	$\pi_*\operatorname{End}_{\kappa(x)}(P\otimes\kappa(x))$ has a nonzero class in degree
	$a(x)-b(x)<0$ (from a map out of the top degree into the bottom one), contradicting
	connectivity of $\operatorname{End}_Z(A)\otimes\kappa(x)\simeq
	\operatorname{End}_{\kappa(x)}(P\otimes\kappa(x))$, the identity just established. Thus, 
	$a(x)=b(x)$ at every point: $P$
	has Tor-amplitude $0$ at every point of $\operatorname{Spec}\pi_0(Z')$. For the passage
	from this pointwise statement to a global one (local constancy of $a(-)$ and the
	identification, on each locus where $a$ is constant, of $\Sigma^{-a_0}P$ with a finite
	projective module)  we cite the standard treatment of
	Tor-amplitude for perfect modules over a connective ring spectrum in
	\cite{ag} (Definition 2.11 and the surrounding results there, developed in exactly
	the generality used here). This is a derived reference in place of the earlier
	general \cite{sag}. On each open where $a$
	takes a fixed value $a_0$, $\Sigma^{-a_0}P$ is a perfect complex concentrated in degree $0$, hence a
	finite projective $\pi_0(Z')$-module there.

	For faithfulness, and for the componentwise construction of the shifted module, we
	make the following. Faithfulness of $P$ over $Z'$ means the
	functor $-\otimes_{Z'}P$ on $\mathrm{Mod}_{Z'}$ is conservative. 
 Equivalently (since
	$P$ is perfect) that $P$ generates $\mathrm{Mod}_{Z'}$ as a thick subcategory. 
	Equivalently, that $P\otimes_{Z'}\kappa(x)\not\simeq0$ for every point
	$x\in\operatorname{Spec}\pi_0(Z')$ (a perfect module vanishes on an open subscheme
	where all its residue fibers vanish, by Nakayama applied in each fiber).
 Since $a(-)$ is
	locally constant, it is constant on each connected component of
	$\operatorname{Spec}\pi_0(Z')$. But different components may carry different values of
	$a$. There is in general no single integer $a_0$ shifting $P$ to degree $0$
	everywhere at once. We therefore do not shift $P$ globally by a single
	suspension. Instead, let $\{U_i\}$ be the (open and closed) decomposition of
	$\operatorname{Spec}\pi_0(Z')$ into the loci where $a(-)$ takes each of its finitely
	many values $a_i$.
  $P$ is by assumption a perfect $Z'$-module, hence has bounded
	Tor-amplitude globally. I.e., there exist integers $m\le M$ (independent of $x$) with
	$P$ of Tor-amplitude $[m,M]$ \cite{ag}. Since $a(x)\in[m,M]$ for every $x$, the
	function $a(-)$ takes at most $M-m+1$ distinct values. Thus, $\{U_i\}$ is indexed by a
	finite set $i\in\{1,\ldots,M-m+1\}$ even though each individual $U_i$ may  be a
	union of infinitely many connected components of $\operatorname{Spec}\pi_0(Z')$. This
	finiteness of the index set (not of the components within each $U_i$) is what
	makes the direct sum defining $Q$ below a finite direct sum, hence a perfect
	module, and define
	\[
		Q\;:=\;\coprod_i\ \Sigma^{-a_i}\bigl(P|_{U_i}\bigr).
	\]
	I.e., $Q$ is the perfect $Z'$-module obtained from $P$ by applying the each component 
	shift $\Sigma^{-a_i}$ on each piece $U_i$ separately. This is well defined as a single
	global perfect $Z'$-module because the $U_i$ are clopen and hence
	$Z'$ splits as a product $\prod_i Z'_i$ compatibly with $P=\prod_iP|_{U_i}$.
  By construction $Q\otimes_{Z'}\kappa(x)$ is concentrated in degree $0$ for every $x$. Thus, 
	$Q$ is a finite projective $\pi_0(Z')$-module in the classical sense (via
	$\pi_0(Q)$), and $\mathrm{End}_{Z'}(Q)\simeq\mathrm{End}_{Z'}(P)$ since a shift by any
	locally constant integer does not change endomorphisms. On each $U_i$,
	$\mathrm{End}_{Z'}(\Sigma^{-a_i}P|_{U_i})\simeq\mathrm{End}_{Z'}(P|_{U_i})$ since 
	$\mathrm{End}$ is invariant under simultaneously shifting a module in both variables of
	$\mathrm{Hom}(-,-)$. Thus, the endomorphism algebras of $P$ and of $Q$ agree factor by
	factor over the decomposition $Z'=\prod_iZ'_i$, and hence agree globally. 

For faithfulness
	of $Q$: at each $x\in U_i$, $\pi_0(Q)\otimes_{\pi_0(Z')}\kappa(x)=\pi_{a_i}(P\otimes
	\kappa(x))\ne0$ by faithfulness of $P$. Thus,  $\pi_0(Q)$ is faithful over
	$\pi_0(Z')$ pointwise, i.e., its support is everything. Therefore, $\pi_0(Q)$ 
(not, in general, $\pi_0(P)$ , if $a(-)$ is non-constant)  is the finite, faithful,
	projective $\pi_0(Z')$-module with
	$\pi_0(A)\otimes_{\pi_0(Z)}\pi_0(Z')\simeq\operatorname{End}_{\pi_0(Z')}(\pi_0(Q))$,
	expressing $\pi_0(A)$ as Azumaya over $\pi_0(Z)$ \'etale-locally on
	$\operatorname{Spec}\pi_0(Z)$. To conclude Azumaya property globally from this, we
	use that each of the three defining conditions descends 
	through faithfully flat (in particular \'etale) base change $\pi_0(Z)\to\pi_0(Z')$.
	Finite projectivity of a module descends along fpqc covers (a classical fact, e.g., 
	\cite{sga1} or \cite{v}). Faithfulness descends since the 
	support of a module is a closed condition compatible with the faithfully flat surjection
	$\operatorname{Spec}\pi_0(Z')\to\operatorname{Spec}\pi_0(Z)$. Thus, a module with full
	support after such a base change already has full support before it. The
	endomorphism-algebra condition $A\otimes_ZA^{\mathrm{op}}\cong\mathrm{End}_Z(A)$
	descends since both sides are compatible with (flat) base change and agreement after
	a faithfully flat base change implies agreement before it (again fpqc descent, applied
	to the map comparing the two sides  instead of   to a single module). Azumaya property being
	an fpqc-local condition in this sense (which is how it is normally defined
	when not simply assumed classically, cf. \cite{grothbrauer, milne})  the \'etale-local
	statement  is what is required. 

For independence of the presentation: two local
	presentations, after passing to a common refinement of their covers, present the same
	$\pi_0(A)\otimes_{\pi_0(Z)}\pi_0(Z'')$ as $\operatorname{End}_{\pi_0(Z'')}$ of two
	faithful finite projective modules, which therefore differ by an invertible
	$\pi_0(Z'')$-module. By Lemma \ref{lemmorita-pic}\,(ii) (applied to the classical,
	degree-zero setting) the two induced reduced traces agree.
\end{proof}

Let us  formulate now the Derived Azumaya Descent Theorem for Cyclotomic $THH$.
Let $Z$ be a connective $\mathbb{E}_\infty$-ring and $A$ a connective derived
Azumaya $Z$-algebra. Connectivity of $A$
allows the identification of $\Phi_A$ on $\pi_0$ with the classical reduced Morita
trace of the ordinary Azumaya algebra $\pi_0(A)$ (Lemma \ref{lempi0azumaya}). 
Also, connectivity of $A$
  allows the $TR^r$ fixed-point information recovered in part
(3) to be extracted from the cyclotomic structure via the classical (non-derived)
Nikolaus-Scholze fixed-point formula. We
mention this so that connectivity is not read as a technical convenience
inherited only from the proof of part (1). We have

\begin{theorem}
\label{thmderived}

\noindent\textit{(1) Underlying equivalence:} there is an equivalence of underlying spectra
\[
\Phi_A:THH(A)\xrightarrow{\sim}THH(Z),
 \]
linear over the $\mathbb{E}_\infty$-ring $THH(Z)$, which on $\pi_0$ recovers the reduced
Morita trace $\pi_0(A)/[\pi_0(A),\pi_0(A)]\xrightarrow{\sim}\pi_0(Z)$, with the same
$\Phi_A$ vs.\ local Morita trace $\tau_P$ distinction as Theorem \ref{thmazumaya}{\rm(}1{\rm)}
and Remark \ref{remscopenarrow}. 

\noindent\textit{(2) Base change compatibility} (as in Theorem
\ref{thmazumaya}{\rm(}2{\rm)}): once $\Phi_A$ is fixed as in part (1), its restriction
$\Phi_A\otimes_ZZ'$ along the \'etale base-change equivalences for $THH$ is a
valid choice of $\Phi_{A\otimes_ZZ'}$ for every \'etale $Z\to Z'$ simultaneously, coherently
under further base change, since restriction along a fixed equivalence is automatically
functorial (we do not claim this choice, or $\Phi_A$, is canonical).

\noindent\textit{(3) Conservation of cyclotomic invariants:} consequently,
for every faithfully flat \'etale splitting cover $Z\to Z'$ trivializing $A$, the local
Morita trace induces, after this base change,
\begin{align*}
TR^r(A\otimes_ZZ';p)\simeq TR^r(Z';p),
\quad 
TC^-(A\otimes_ZZ';\mathbb{Z}_p)\simeq TC^-(Z';\mathbb{Z}_p),
\\
TP(A\otimes_ZZ';\mathbb{Z}_p)\simeq TP(Z';\mathbb{Z}_p),
\quad 
TC(A\otimes_ZZ';\mathbb{Z}_p)\simeq TC(Z';\mathbb{Z}_p),
\end{align*}
compatibly with the restriction, Frobenius, and Verschiebung operators of the
$TR$-system, and globally on $A$  whenever $A$ admits a global splitting module.
\end{theorem}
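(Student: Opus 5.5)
The plan is to run the argument of Theorem \ref{thmazumaya} again in the $\infty$-categorical setting. The classical inputs are replaced by their derived counterparts: Lemma \ref{lemkunnethhigher} in place of Lemma \ref{lemkunneth}, Antieau-Gepner local triviality \cite{ag} in place of Milne's, and Lemma \ref{lempi0azumaya} for the $\pi_0$-normalization.

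\emph{Step 1: invertibility.} Apply Lemma \ref{lemkunnethhigher} with $B=A^{\mathrm{op}}$ and use the defining equivalence $A\otimes_ZA^{\mathrm{op}}\simeq\mathrm{End}_Z(A)$. This gives
\[
THH(A)\otimes_{THH(Z)}THH(A^{\mathrm{op}})\simeq THH(\mathrm{End}_Z(A))\xrightarrow{\ \tau_A\ }THH(Z).
\]
Here $\tau_A$ is the Morita trace coming from $\mathrm{Perf}(\mathrm{End}_Z(A))\simeq\mathrm{Perf}(Z)$. This equivalence holds because $A$ is perfect and faithful, hence a compact generator; it is the derived form of Lemma \ref{lemmorita-pic}\,(i), which is Blumberg-Gepner-Tabuada Morita invariance \cite{bgt}. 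Hence $THH(A)$ is an invertible $THH(Z)$-module.

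\emph{Step 2: trivialization.} Since $Z$ and $A$ are connective, $THH(A)$, $THH(A^{\mathrm{op}})$ and $THH(Z)$ are connective, and $\pi_0THH(Z)=\pi_0Z$. By Lemma \ref{leminvtrivial}\,(i)-(ii), the module $THH(A)$ is determined by the invertible $\pi_0Z$-module
\[
\pi_0THH(A)=HH_0(\pi_0A)=\pi_0A/[\pi_0A,\pi_0A].
\]
By Lemma \ref{lempi0azumaya}, $\pi_0A$ is classically Azumaya. Its reduced Morita trace therefore identifies this module with $\pi_0Z$, independently of the chosen presentation. Lemma \ref{leminvtrivial}\,(ii) then yields $\Phi_A$ with $\pi_0\Phi_A$ equal to the reduced Morita trace. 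By Lemma \ref{leminvtrivial}\,(iii), $\Phi_A$ is unique only up to $GL_1(THH(Z))_1$.

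This is the step I expect to be the main obstacle: justifying $\pi_0THH(A)\cong HH_0(\pi_0A)$ and the connectivity claims in the derived setting. I would first try the Hurewicz/Tor spectral sequence $\pi_0(A\otimes_{A\otimes A^{\mathrm{op}}}A)$ computed from $\pi_0$ of connective ring spectra. I would also need to check that connectivity of $A$ forces the splitting module to have Tor-amplitude $0$ pointwise, via the argument already given in Lemma \ref{lempi0azumaya}.

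\emph{Step 3: base change, part (2).} I would argue verbatim as in Theorem \ref{thmazumaya}(2). The derived analogue of Lemma \ref{lemetale} follows from étale base change for $THH$ over connective $\mathbb{E}_\infty$-rings together with Lemma \ref{lemkunnethhigher}. Using it, restrict the fixed $\Phi_A$ along $-\otimes_{THH(Z)}THH(Z')$. The reduced trace is natural under flat base change on $\pi_0$, so the restriction is a valid choice of $\Phi_{A\otimes_ZZ'}$. Coherence under further base change holds by associativity of base change.

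\emph{Step 4: part (3).} Take a splitting cover $A\otimes_ZZ'\simeq\mathrm{End}_{Z'}(P)$ from \cite{ag}. The Morita trace $\tau_P$ is a map in $\mathrm{CycSp}$ because $THH$ lands in $\mathrm{CycSp}$ \cite{nik}. It is an equivalence there, since the forgetful functor to spectra is conservative, as used in the proof of Lemma \ref{lemkunnethhigher}. Applying the functors $TC^-$, $TP$, $TC$ and (using bounded-belowness, which follows from connectivity, and the Nikolaus-Scholze fixed-point description) $TR^r$ yields the stated equivalences. These are compatible with $R$, $F$, $V$ because all of them are natural in cyclotomic maps. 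When $A$ is globally split, take $Z'=Z$ and $\Phi_A=\tau_P$.
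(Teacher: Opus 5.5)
Your proposal is correct and follows the paper's proof essentially step for step. It obtains invertibility of $THH(A)$ over $THH(Z)$ from Lemma \ref{lemkunnethhigher} and the Morita trace $\tau_A$, trivializes it via Lemma \ref{leminvtrivial} normalized by the reduced Morita trace of Lemma \ref{lempi0azumaya}, proves part (2) by restricting the fixed $\Phi_A$ along \'etale base change, and proves part (3) through the cyclotomic local trace $\tau_P$ rather than through $\Phi_A$.

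The one detail to add concerns the globally split case. Before setting $\Phi_A=\tau_P$, replace $P$ by its componentwise degree-zero shift $Q$ from the proof of Lemma \ref{lempi0azumaya}. Otherwise an odd shift such as $P=\Sigma Z$ makes $\tau_P$ induce $-\mathrm{trd}$ on $\pi_0$, since suspension acts by $-1$ on any additive invariant. This is the normalization the paper's remark after the theorem points to.
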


The strongest accurate statement of what this gives towards a BMS filtration on $A$
 is: part (1) supplies the underlying $THH$-comparison that would be
needed to transport such a filtration, once an appropriate derived Bhatt-Morrow-Scholze
filtration on the center $Z$ is supplied. We do not construct or transport a filtration
in the derived case in this paper (see Table \ref{tabsummary}).

\begin{remark}
	Connectivity of $A$ is not a technical convenience but an assumption the theorem
	needs. Without it, part (1) $\pi_0$ identification can fail. For $Z=H\pi_0(Z)$
	discrete and $P=Z\oplus\Sigma Z$, the algebra $A=\mathrm{End}_Z(P)$ is a
	(non-connective) derived Azumaya $Z$-algebra with $THH(A)\simeq THH(Z)$ by Morita
	invariance, yet $\pi_0(A)\cong\pi_0(Z)\times\pi_0(Z)$, thus 
	$\pi_0(A)/[\pi_0(A),\pi_0(A)]\not\cong\pi_0(Z)$. The failure of connectivity,  as opposed to just the $\pi_0$-count,  is transparent by computing $A$ in each degree. 
	Setting $P=Z\oplus\Sigma Z$ as a sum of the two evident summands,
	\begin{eqnarray*}
&A=\mathrm{Hom}_Z(P,P)\simeq\mathrm{Hom}_Z(Z,Z)\oplus\mathrm{Hom}_Z(Z,\Sigma Z)
		\\ &\qquad \qquad \qquad  \oplus\mathrm{Hom}_Z(\Sigma Z,Z)
\oplus\mathrm{Hom}_Z(\Sigma Z,\Sigma Z)
		\simeq Z\oplus\Sigma Z\oplus\Sigma^{-1}Z\oplus Z,
	\end{eqnarray*}
	using $\mathrm{Hom}_Z(\Sigma Z,Z)\simeq\Sigma^{-1}\mathrm{Hom}_Z(Z,Z)\simeq\Sigma^{-1}Z$.
	In particular $\pi_{-1}(A)\cong\pi_0(Z)\ne0$ (this is exactly the
	$\mathrm{Hom}_Z(\Sigma Z,Z)$ summand, sitting in degree $-1$). $A$ has a nonzero
	homotopy group in a strictly negative degree. Thus, $A$ is  non-connective, not
	only failing the $\pi_0$-identification for some subtler reason. Connectivity of $A$ does not 
	 lead to a local splitting module $P$ to have trivial categorical dimension. 
	$P=\Sigma Z$ gives the connective algebra $\mathrm{End}_Z(P)\simeq Z$ even though $P$
	has categorical dimension $-1$. 
 Connectivity of $A=\mathrm{End}_Z(P)$ leads to 
	  that $P$ is concentrated in a single, locally
	constant degree. Thus, $P$ may be replaced by a shift of trivial categorical dimension
	without changing $\mathrm{End}_Z(P)$. It is this normalization, not connectivity of
	$A$ alone, that lets the trace argument proceed. Finally, connectivity
	of $A$ (hence of $THH(A)$) is what lets $TR^r$ in part (3) be recovered from the
	cyclotomic structure in the sense of \cite{nik}.
\end{remark}

\begin{proof}
	As in the proof of Theorem \ref{thmazumaya}, the algebra isomorphism
	$A\otimes_ZA^{\mathrm{op}}\xrightarrow{\ \sim\ }\mathrm{End}_Z(A)$ defining derived
	Azumaya-ness exhibits $\mathrm{Perf}(A)$ as an invertible object of
	$\mathrm{Mod}_{\mathrm{Perf}(Z)}(\mathrm{Cat}_\infty^{\mathrm{perf}})$ with inverse
	$\mathrm{Perf}(A^{\mathrm{op}})$ \cite{ag, toen}. Applying $THH$ (now Lemma
	\ref{lemkunnethhigher}, the $\infty$-categorical Kunneth equivalence proved above for
	$Z$ a connective $\mathbb{E}_\infty$-ring and $\otimes_Z$ derived,  instead of   the
	discrete Lemma \ref{lemkunneth}) to this isomorphism gives, as in the proof of
	Theorem \ref{thmazumaya},
	\[
		THH(A)\otimes_{THH(Z)}THH(A^{\mathrm{op}})\;\simeq\;THH(\mathrm{End}_Z(A))
		\;\xrightarrow[\ \sim\ ]{\ \tau_A\ }\;THH(Z),
	\]
	expressing $THH(A)$ as invertible in $\mathrm{CycSp}$, with inverse
	$THH(A^{\mathrm{op}})$. 

	Forgetting to underlying spectra: since $A$ and $A^{\mathrm{op}}$ are connective, so
	are $THH(A)$ and $THH(A^{\mathrm{op}})$, and $THH(Z)$ is a connective
	$\mathbb{E}_\infty$-ring. Lemma \ref{leminvtrivial} identifies the underlying
	$THH(Z)$-module $THH(A)$ with $\pi_0THH(A)=\pi_0(A)/[\pi_0(A),\pi_0(A)]$, provided
	$\pi_0(A)$ is Azumaya over $\pi_0(Z)$ (so that the \emph{reduced Morita trace} of
	Lemma \ref{lempi0azumaya} below trivializes this module). This supplies the
	underlying equivalence
	\[
		\Phi_A:THH(A)\xrightarrow{\ \sim\ }THH(Z),
		\qquad \pi_0\Phi_A=\text{reduced Morita trace}.
	\]
	As in the proof of Theorem \ref{thmazumaya}, choosing a faithfully flat
	\'etale splitting $Z\to Z'$ with $A\otimes_ZZ'\simeq\mathrm{End}_{Z'}(P)$
	\cite{ag}, the base-changed $\Phi_A\otimes_ZZ'$ and the local Morita trace
	$\tau_P$ both induce the reduced trace on $\pi_0$, but it is not possible 
to conclude they are
	equal. The same ambiguity analyzed in Lemma \ref{leminvtrivial} and in the proof of
	Theorem \ref{thmazumaya}. It is not claimed that $\Phi_A$  is
	\'etale-locally cyclotomic. Instead, $\tau_P$, cyclotomic, supplies
	the \'etale-local cyclotomic consequences of part (3). 
The same
	warning as in Theorem \ref{thmazumaya} applies to upgrading this to a single global
	cyclotomic equivalence when $A$ has no global splitting module. This is not resolved 
	in this paper. In particular, when $Z$ and $A$ are discrete this is Theorem
	\ref{thmazumaya}, of which the present theorem is the derived generalization.

	Part (2) follows exactly as in the corrected argument for Theorem
	\ref{thmazumaya}{\rm(}2{\rm)}: no independent reconstruction of $\Phi_{A\otimes_ZZ'}$
	is needed. The fixed $\Phi_A$ of part (1) restricts, along \'etale base change, to an
	equivalence $\Phi_A\otimes_ZZ':THH(A\otimes_ZZ')\to THH(Z')$ whose $\pi_0$ is the
	restriction of the reduced Morita trace (again because base change commutes with the
	natural transformation defining that trace). Thus, $\Phi_A\otimes_ZZ'$ satisfies part
	(1) requirements for $A\otimes_ZZ'$ by construction. The resulting system is
	coherent under further base change by associativity of $-\otimes_ZZ'$ alone.

	Part (3): \'etale-locally, $\tau_P$ is built entirely inside $\mathrm{CycSp}$, so
	$TC^-$, $TP$, $TC$ transport there, applied to $\tau_P$, as functors of the cyclotomic structure
	\cite{nik}. Since $A$ (hence $THH(A)$) is connective, the fixed points
	computing $TR^r$ are similarly recovered from this structure \cite{nik}. Thus, $TR^r$
	transports as well, \'etale-locally via $\tau_P$, and globally under the same  assumption as in
	Theorem \ref{thmazumaya}. 
\end{proof}

	\begin{table}[h]
\centering
\small
\label{tabsummary}
\begin{tabular}{@{}p{3.6cm}p{4.3cm}p{4.3cm}@{}}
\toprule
 & \textbf{Theorem \ref{thmazumaya}}($A$ Azumaya$/Z$) & \textbf{Theorem \ref{thmderived}} ($A$ connective derived Azumaya$/Z$) \\
\midrule
$\Phi_A:THH(A)\to THH(Z)$, underlying spectra & Global; $\pi_0\Phi_A=\mathrm{trd}$ & Global; $\pi_0\Phi_A=$ reduced Morita trace (Lemma \ref{lempi0azumaya}) \\
Local Morita trace $\tau_P$, as cyclotomic spectra & \'Etale-local (Lemma \ref{lemmorita-pic}\,(i)); not claimed equal to $\Phi_A$ beyond $\pi_0$; global if $[A]=0\in\mathrm{Br}(Z)$ (converse not claimed) & Same \\
$TR^r$ & \'Etale-local via $\tau_P$ / global as above & Same, using connectivity of $A$ \\
$TC^-,TP,TC$ & \'Etale-local via $\tau_P$ / global as above & Same \\
Transported filtration $F^\bullet THH(A;\mathbb{Z}_p)$ & Defined globally (pullback along underlying $\Phi_A$); graded pieces as stated & Not treated in this paper for the derived case \\
Canonicity of $\Phi_A$ & Not claimed beyond Lemma \ref{leminvtrivial} classification & Same \\
\bottomrule
\end{tabular}

\medskip 
\caption{Global vs.\ \'etale-local content of the two main theorems.}
\end{table}

\section{Conclusion and discussion}
\label{furtho}

In this paper, we have established a rigorous homotopical foundation for transporting the Bhatt-Morrow-Scholze (BMS) motivic filtration to the noncommutative setting. By demonstrating that topological Hochschild homology descends to the center (as an underlying spectrum, and \'etale-locally as a cyclotomic spectrum)  for Azumaya algebras over quasisyntomic rings (Theorem \ref{thmazumaya}) and for connective derived Azumaya algebras over connective $\mathbb{E}_\infty$-rings (Theorem \ref{thmderived}), we have resolved the structural prerequisites for a noncommutative BMS filtration within this broad class. In the non-derived case of Theorem \ref{thmazumaya}, the underlying equivalence is used to construct the transported filtration  (Definition \ref{deftransported}), which is complete, exhaustive, and properly graded. In the derived case of Theorem \ref{thmderived}, by contrast, what is supplied is only the underlying $THH$-comparison that transport would require, not a constructed derived filtration (Table \ref{tabsummary}). The higher invariants $TR$, $TC^-$, $TP$, and $TC$ are preserved \'etale-locally, and globally whenever a global splitting module exists.

We mention the physical motivation: an application and interpretation of
the mathematical results above. What is
proved is the Azumaya/center $THH$-comparison of Theorems \ref{thmazumaya} and
\ref{thmderived} and the transported filtration of Definition \ref{deftransported}, for
the specific algebras (modular Weyl algebras, quantum tori at flux prime to $p$)
computed in Section \ref{azumayo}. From a physical perspective, we consider these
computations as an arithmetic setup for the reduction of quantum observable algebras
modulo $p$. The mathematical separation of the transported, center-dependent filtration
from the quantum data (the latter mentioned instead by the Brauer class and
$K$-theoretic pairings)  suggests, as motivation, a picture of the
quantum-to-classical interface relevant to the integer and fractional quantum Hall
effects. We do not claim a theorem identifying our filtration with any specific
physical observable beyond the algebraic statements already proved.

Looking forward, the establishment of this filtration for Azumaya algebras clarifies the boundaries and future direction for a fully general noncommutative prismatic cohomology
$\mathcal{NC}\text{-}\mathrm{Prismatic}^{i,j}$.
 The frontier now shifts to the non-Azumaya regimes, where Theorem \ref{thmazumaya} simply does not apply (its Azumaya hypothesis is not met) and, on the evidence available, the pattern it establishes for Azumaya algebras does not extend. The sharpest available measure of that non-extension is the computation of Chan and Lindenstrauss \cite{cl} for a maximal order $V$ in a central simple $\mathbb{Q}$-algebra $A$. Writing $e_p$ for the degree of the central division algebra $D_p$ over $\mathbb{Q}_p$ with $A\otimes_{\mathbb{Q}}\mathbb{Q}_p\cong M_{k_p}(D_p)$, the groups $THH_*(V)$ consist of $THH_*(\mathbb{Z})$ together with an extra summand $\bigoplus_p\mathbb{F}_p^{\oplus e_p-1}$ in every even degree. Since $e_p=1$ exactly at the primes where $A$ is unramified, that is, exactly where $V\otimes\mathbb{Z}_p$ is Azumaya over $\mathbb{Z}_p$, the extra summand is supported on the ramified locus and quantifies the deviation from $THH$ of the center once Azumaya-ness is dropped. We present this as a piece of evidence for a specific maximal-order example, demonstrating the failure of the center-comparison phenomenon in that particular non-Azumaya setting,  instead of   as a general theorem that every non-Azumaya algebra acquires additional $THH$ summands.
 
 The integral interpolation of the Hofstadter regime over mixed characteristic discrete valuation rings, as highlighted in Remark \ref{remppower}, and the study of general $\hbar$-adic deformations prior to specialization remain important open problems. Resolving these will likely require moving beyond Azumaya descent and trace methods toward  noncommutative Hochschild-Witt complexes \cite{Kaledin2016} and non-abelian structural setup capable of capturing the full $A_\infty$-cyclotomic deformation.

Beyond the Azumaya class, the existing general answer to constructing a motivic filtration outside the quasisyntomic commutative setting is the even filtration of Hahn-Raksit-Wilson \cite{hrw}, defined for $\mathbb{E}_\infty$-ring spectra and agreeing with the Bhatt-Morrow-Scholze filtration of Theorem \ref{thmhm} in the quasisyntomic commutative case. It does not  resolve the noncommutative, non-Azumaya problem of Section \ref{ncbms}. The even filtration is built from the ring spectrum  $\mathbb{E}_\infty$-structure, whereas the postulates of Section \ref{physo} ask for a filtration attached to a noncommutative 
 deformation $\mathcal{A}_\star$ together with compatibility under Morita equivalence of the resulting associative algebra - a different, and in our view harder, request. Combining the two perspectives is a natural next step that we leave open.

The material of this paper can be also useful in mathematical physics
\cite{RSZ, Zu2, Zu3} and condensed matter theory 
\cite{chernodub2017scale, zhang2020influence, zubkov2012momentum, 
zubkov2017topology, kmmzz}. 

\section*{Acknowledgments}

The authors were supported by the Institute of Mathematics, Academy of Sciences of the Czech
Republic (RVO 67985840). 
The authors would like to thank Petr Somberg for sharing his paper \cite{HKS} with us.

	\medskip
	\noindent\textbf{Data Availability.}
	Data sharing is not applicable to this article as no datasets were generated
	or analysed during the current study.

	\medskip
	\noindent\textbf{Declarations}

This paper arose in close collaboration with ChatGPT, an AI assistant developed by 
OpenAI. The collaboration involved exploring proof strategies, refining arguments, locating
references, and drafting. Revision and writing of final text have been done by the authors.
The authors take full responsibility for the mathematical content and the final text.

	\medskip
	\noindent\textbf{Conflict of interest.}
	The authors have no conflicts of interest to declare that are relevant to the
	content of this article.


\end{document}